\documentclass[11pt,reqno]{amsart}

\usepackage[left=1in,right=1in,top=1in,bottom=1.17in]{geometry}
\usepackage{amsmath,amssymb,amsthm,mathrsfs}
\usepackage[hidelinks]{hyperref}
\usepackage{xcolor}
\usepackage{enumerate}

\newcommand{\E}{\mathcal E}
\newcommand{\D}{\mathcal D}

\newtheorem{theorem}{Theorem}[section]

\newtheorem{lemma}[theorem]{Lemma}
\newtheorem{proposition}[theorem]{Proposition}
\numberwithin{equation}{section}

\theoremstyle{definition}

\newtheorem{remark}[theorem]{Remark}

\title[Uniform stability of degenerate shock--rarefaction waves]{Uniform Stability of degenerate Oleinik shock and rarefaction waves under large perturbations}

\author[M.-J. Kang, B. Kwon, and W. Shim]{Moon-Jin Kang, Bongsuk Kwon, and Wanyong Shim}

\address{(Moon-Jin Kang) Department of Mathematical Sciences, Korea Advanced Institute of Science and Technology, Daejeon, 34141, Korea}
\email{moonjinkang@kaist.ac.kr}

\address{(Bongsuk Kwon) Department of Mathematical Sciences, Ulsan National Institute of Science and Technology, Ulsan, 44919, Korea}
\email{bkwon@unist.ac.kr}

\address{(Wanyong Shim) Department of Mathematical Sciences, Korea Advanced Institute of Science and Technology, Daejeon, 34141, Korea}
\email{wyshim25@kaist.ac.kr}

\subjclass{35K55, 35B35, 35B40, 35C07}

\keywords{conservation laws, Oleinik shock, rarefaction, uniform stability, inviscid limit}

\thanks{\textbf{Acknowledgments.}
M.K. was supported by  Samsung Science and Technology Foundation under Project Number SSTF-BA2102-01. B.K. was supported by the National Research Foundation of Korea (NRF) grant funded by the Korean government (MSIT) (00560003). W.S. was supported by the Basic Science Research Program through the National Research Foundation of Korea (NRF) funded by the Ministry of Education (RS-2025-25429122).}

\begin{document}

\begin{abstract}
We study the uniform stability of composite waves consisting of a degenerate Oleinik shock and a rarefaction wave for scalar viscous conservation laws. We consider cubic-type fluxes and arbitrarily large $L^2$ initial perturbations of the composite wave within the $H^1$ strong-solution class. We obtain a uniform $L^2$ estimate for such large perturbations. In particular, the estimate implies the contraction of a degenerate Oleinik shock, up to a dynamical shift. As a further consequence, the uniformity with respect to viscosity implies that the associated self-similar solution, composed of the degenerate shock and rarefaction, is stable in the class of inviscid limits. The main analytical difficulty is that the standard weighted Poincar\'e inequality underlying the $a$-contraction method fails to yield the required coercivity at the linear level. We overcome this obstruction by establishing a new degenerate Hardy--Poincar\'e inequality adapted to the degenerate structure of the shock profile. Combined with an exact nonlinear decomposition, this linear inequality yields a strict coercivity estimate that controls the quadratic energy together with the higher-order nonlinear terms, without any smallness assumption on the perturbation.
\end{abstract}

\maketitle


\section{Introduction}

We study the uniform stability of composite waves for the scalar viscous conservation law
\begin{equation} \label{eq:model}
u_t+f(u)_x=u_{xx},\qquad t>0,\quad x\in\mathbb R,
\end{equation}
where $u=u(t,x)$ is the conserved quantity and $f\in C^3(\mathbb R)$ is the flux. The initial data are prescribed by
\begin{equation}\label{eq:initial-data} 
u(0,x)=u_0(x),\qquad \lim_{x\to\pm\infty}u_0(x)=u_\pm. 
\end{equation}

The large-time behavior of the Cauchy problem \eqref{eq:model}--\eqref{eq:initial-data} is expected to be governed by the Riemann problem for the corresponding inviscid conservation law
\begin{equation} \label{eq:inviscid-equation}
u_t + f(u)_x = 0,
\end{equation}
with left and right states $u_-$ and $u_+$, respectively. For a strictly convex flux, the entropy solution of the Riemann problem consists of a single shock or rarefaction wave. A non-convex flux, however, may generate a Riemann solution composed of several elementary waves. In this paper, we focus on a particular composite configuration in which a degenerate Oleinik shock is attached to a rarefaction wave. The cubic flux
\begin{equation*}
f_0(u):=u^3
\end{equation*}
provides a canonical example of this configuration. We consider an open class of $C^3$ perturbations of $f_0$. We denote an \emph{admissible flux} in this class by $f$, while the exact cubic flux is denoted by $f_0$.

For the admissible fluxes considered here, the relevant states satisfy $u_-<0<u_m<u_+$ and
\begin{equation} \label{eq:perturbed-degenerate-compatibility}
\sigma:=\frac{f(u_m)-f(u_-)}{u_m-u_-}=f'(u_m).
\end{equation}
The shock connecting $u_-$ to $u_m$ is thus characteristic at its right end state, and its speed $\sigma$ coincides with the speed of the left edge of the rarefaction fan connecting $u_m$ to $u_+$. The two waves are therefore attached and together form the corresponding inviscid Riemann solution. For $f=f_0$, the compatibility relation gives $u_-=-2u_m$, and the viscous shock profile $U^S$ satisfies
\begin{equation*}
(U^S)'=(U^S+2u_m)(U^S-u_m)^2.
\end{equation*}
The double zero at \(u_m\) produces an algebraically decaying tail on the right, while the simple zero at \(u_-=-2u_m\) yields exponential decay on the left.

For the exact cubic flux $f_0$, Huang, Wang, and Zhang \cite{HWZ} established the first time-asymptotic stability result for this composite wave. Their result allows an arbitrarily strong degenerate Oleinik shock and a sufficiently weak rarefaction under a small $H^1$-type perturbation. Their proof uses a modified $a$-contraction method with a specially constructed shock-dependent weight. The $a$-contraction method  is introduced by Kang and Vasseur in the study on the stability of viscous shock for viscous conservation laws with convex flux \cite{KV,KV1}. 
Based on the typical $a$-contraction method, the leading order terms of small $L^2$-perturbations are controlled through the standard weighted Poincar\'e inequality \cite[Lemma 2.9]{KV1} (see for example  \cite{HKL,KU25,KVW}). In \cite{HWZ}, the standard weighted Poincar\'e inequality is applied only on a truncated portion of the shock profile, while the algebraic tail is treated separately. In their analysis, a common time-dependent shift is applied to the shock and the approximate rarefaction to preserve the attached configuration. We also refer to the recent result \cite{KU} for the time-asymptotic stability of small perturbations of the composition of Oleinik shock and rarefaction for more general flux.

The present paper establishes large-perturbation stability for the same attached configuration, with an arbitrarily strong degenerate Oleinik shock and a sufficiently weak rarefaction. For an open class of $C^3$ perturbations of the cubic flux, we prove a uniform $L^2$ estimate, up to a time-dependent shift, for arbitrarily large $L^2$ perturbations within the $H^1$ strong-solution class. This estimate can be formulated in terms of a shifted shock and an unshifted self-similar rarefaction. The estimate for the normalized equation \eqref{eq:model} is compatible with the parabolic scaling and yields a bound uniform in the viscosity. Taking the vanishing-viscosity limit, we obtain $L^2$ stability and uniqueness of the corresponding inviscid composite wave in a class of weak inviscid limits for initial perturbations in $L^2$.

The key ingredient of our analysis is a coercivity mechanism for the degenerate shock that operates simultaneously at the linear and nonlinear levels. At the linear level, the standard weighted Poincar\'e inequality underlying the $a$-contraction method  does not directly yield the required coercivity. We resolve this obstruction by establishing a new degenerate Hardy--Poincar\'e inequality adapted to the algebraic tail of the shock profile. At the nonlinear level, an exact decomposition for the cubic flux separates the shock contribution into a quadratic form controlled by this inequality and nonnegative nonlinear remainder terms. Together, these ingredients yield a strict nonlinear coercivity estimate that controls the quadratic and higher-order terms without any smallness assumption on the perturbation. This strict coercivity persists under sufficiently small perturbations of the flux. A more detailed description is given in Section~\ref{sec:intro-main-ideas}.

We next place these results in the context of the stability theory for scalar conservation laws. The large-time analysis of scalar viscous conservation laws goes back to Il'in and Oleinik \cite{IO}. Subsequent works established nonlinear stability and decay estimates for single viscous shocks under general or non-convex flux assumptions, including degenerate and partially linearly degenerate cases; see, among others, \cite{FS,HX,JGK,MN94,MY,Mei}. Together, these works provide a broad range of approaches to establishing convergence toward a single traveling wave.

A different line of work concerns stability under large initial perturbations. At the inviscid level, Leger proved $L^2$ stability of scalar shocks for convex fluxes up to a shift \cite{Leger}, while Cheng obtained an analogous result for concave--convex fluxes \cite{Cheng}. For viscous scalar conservation laws, Nishihara and Zhao obtained algebraic $L^\infty$ convergence rates toward a single shock profile, including characteristic degenerate shocks, for arbitrarily large perturbations under suitable spatial localization assumptions \cite{NZh}. Choi and Vasseur derived short-time $L^2$ estimates uniform in the viscosity for a class of large perturbations \cite{CV}. Kang and Vasseur established global exact $L^2$ contraction up to a shift for viscous shocks of fluxes close to Burgers, without smallness assumptions on either the shock or the perturbation \cite{KV}. Weighted $L^2$-type contraction for small shocks with general strictly convex fluxes was proved by Kang \cite{Kang2021}. Related large-perturbation stability and viscosity-uniform inviscid-limit results for the Navier--Stokes system can be found in \cite{EEK,KV1,KV2}.

The stability theory for composite waves consisting of a viscous shock and a rarefaction wave developed more recently. Classical approaches to the two component waves were not directly compatible: viscous shocks were typically treated by anti-derivative methods, whereas rarefaction waves were handled by direct energy estimates. Kang, Vasseur, and Wang established the time-asymptotic stability of such composite waves for the barotropic Navier--Stokes equations by combining the $a$-contraction method with energy estimates for the rarefaction wave \cite{KVW}; for related extensions, see \cite{EHK,GH, HanKim,KVW25,Shim,SW}. This framework was further extended to generic Riemann solutions containing a shock, a contact wave, and a rarefaction for the full compressible Navier--Stokes--Fourier system \cite{KVW25}. In the scalar non-convex setting, Huang, Wang, and Zhang treated the attached configuration of a degenerate Oleinik shock and a rarefaction wave for the exact cubic flux $f_0$ \cite{HWZ}. The present work combines this attached geometry with large-perturbation estimates, the vanishing-viscosity limit, and robustness under perturbations of the cubic flux.

\subsection{Component waves and reference profile}

We first introduce the component waves and the reference composite profile.

\subsubsection{Degenerate viscous Oleinik shock} \label{sec:1.1}

Let $\sigma$ be the shock speed defined in \eqref{eq:perturbed-degenerate-compatibility}. In the moving coordinate $\xi=x-\sigma t$, the viscous shock profile $u^S=u^S(\xi)$ satisfies
\begin{equation} \label{eq:shock-ode}
-\sigma u^S_\xi+f(u^S)_\xi=u^S_{\xi\xi},\quad \lim_{\xi\to-\infty}u^S(\xi)=u_-,\quad \lim_{\xi\to+\infty}u^S(\xi)=u_m,\quad u^S(0)=\frac{u_-+u_m}{2}.
\end{equation}
The last condition fixes the translation of the profile. Integrating the profile equation from \(+\infty\), we obtain
\begin{equation*}
u^S_\xi=f(u^S)-f(u_m)-f'(u_m)(u^S-u_m).
\end{equation*}
The right-hand side vanishes at both \(u_-\) and \(u_m\) by \eqref{eq:perturbed-degenerate-compatibility}, and its first derivative also vanishes at \(u_m\). Thus the equilibrium at the right end state is degenerate. The existence and quantitative properties of such shock profiles for admissible fluxes are established in Section~\ref{sec:shock-properties}.

The structure is explicit for the canonical flux \(f_0(u)=u^3\). In this case, the compatibility relation determines
\begin{equation*}
U_-:=-2u_m,\qquad \sigma_0:=f_0'(u_m)=3u_m^2.
\end{equation*}
Let \(U^S=U^S(\xi)\) denote the corresponding profile normalized by
\begin{equation*}
-\sigma_0 U^S_\xi+f_0(U^S)_\xi=U^S_{\xi\xi},\quad \lim_{\xi\to-\infty}U^S(\xi)=U_-,\quad \lim_{\xi\to+\infty}U^S(\xi)=u_m,\quad U^S(0)=\frac{U_-+u_m}{2}.
\end{equation*}
Its first-order equation factorizes exactly as
\begin{equation}\label{eq:cubic-shock-first-order}
U^S_\xi=(U^S-U_-)(U^S-u_m)^2>0.
\end{equation}
The simple factor at \(U_-\) and the double factor at \(u_m\) determine the exponential left tail and the algebraic right tail, respectively.

\subsubsection{Rarefaction wave and its smooth approximation} \label{sec:1.2}

Let $\lambda_m:=f'(u_m)$ and $\lambda_+:=f'(u_+)$. For the admissible fluxes and rarefaction strengths considered below, $f'$ is strictly increasing on $[u_m,u_+]$; see \eqref{eq:rarefaction-convexity}. The centered rarefaction wave connecting $u_m$ to $u_+$ is defined, for $t>0$, by
\begin{equation*} 
\bar u^r\left(\frac{x}{t}\right):=\begin{cases}u_m,&x\leq\lambda_m t,\\ (f')^{-1}(x/t),&\lambda_m t<x<\lambda_+t,\\ u_+,&x\geq\lambda_+t.\end{cases}
\end{equation*}
Its initial datum equals $u_m$ for $x<0$ and $u_+$ for $x>0$.

We next introduce the smooth approximate rarefaction wave. Let $w^R=w^R(t,x)$ be the unique global smooth solution of
\begin{equation}\label{eq:burgers-rarefaction}
w^R_t+w^Rw^R_x=0,\qquad w^R(0,x)=\frac{\lambda_m+\lambda_+}{2}+\frac{\lambda_+-\lambda_m}{2}\tanh x,
\end{equation}
and define
\begin{equation}\label{def_rarefaction}
\bar u^R(t,x):=(f')^{-1}\bigl(w^R(t,x)\bigr).
\end{equation}
Since the initial profile for $w^R$ is increasing, its characteristics do not intersect and $w^R_x>0$. Moreover, $w^R=f'(\bar u^R)$, and hence
\begin{equation*}
\bar u^R_t+f(\bar u^R)_x=0,\qquad \bar u^R_x>0,\qquad \lim_{x\to-\infty}\bar u^R(t,x)=u_m,\qquad \lim_{x\to+\infty}\bar u^R(t,x)=u_+.
\end{equation*}

We define the approximate rarefaction in the shock frame $\xi=x-\sigma t$ by
\begin{equation}\label{eq:rarefaction-shock-frame}
u^R(t,\xi):=\bar u^R(1+t,\xi+\sigma(1+t)).
\end{equation}
The time shift permits comparison with the centered rarefaction fan at a positive time when $t=0$. The spatial translation aligns the left edge of that fan at time $1+t$ with $\xi=0$, since $\sigma=\lambda_m$. In the original variables, this corresponds to $\bar u^R(1+t,x+\sigma)$. The profile $u^R$ satisfies
\begin{equation}\label{eq:rarefaction-frame-equation}
u^R_t-\sigma u^R_\xi+f(u^R)_\xi=0,\qquad u^R_\xi>0.
\end{equation}

\subsubsection{Reference composite profile and residual} \label{sec:1.3}

The stability estimate for \eqref{eq:model} is formulated relative to the composite wave
\begin{equation*}
u^S(x-\sigma t+X(t))+\bar u^r\left(\frac{x}{t}\right)-u_m
\end{equation*}
for a suitable time-dependent shift $X$. For our analysis, however, we use a smooth reference profile obtained by replacing the exact rarefaction wave with its smooth approximation $u^R$. We define
\begin{equation}\label{eq:composite-profile}
\widetilde u(t,\xi):=u^S(\xi)+u^R(t,\xi)-u_m.
\end{equation}
In the energy estimates, we compare $u(t,x)$ with $\widetilde u(t,x-\sigma t+X(t))$, so the same time-dependent shift is applied to both component waves. This common shift preserves the attached geometry. The rarefaction approximation estimates, together with the control of the shift, then allow us to replace the shifted approximate rarefaction by the unshifted self-similar wave $\bar u^r(x/t)$ in the stability estimate, yielding the form stated above.

For the unshifted reference profile, \eqref{eq:shock-ode} and \eqref{eq:rarefaction-frame-equation} give
\begin{equation}\label{eq:composite-residual-equation}
\widetilde u_t-\sigma\widetilde u_\xi+f(\widetilde u)_\xi-\widetilde u_{\xi\xi}=F,
\end{equation}
where
\begin{equation}\label{eq:residual}
F:=\bigl[f'(\widetilde u)-f'(u^S)\bigr]u^S_\xi+\bigl[f'(\widetilde u)-f'(u^R)\bigr]u^R_\xi-u^R_{\xi\xi}.
\end{equation}
The first two terms are generated by the nonlinear interaction of the component waves, while the last term is the viscous defect of the approximate rarefaction. For $f=f_0$, the residual is
\begin{equation*}
F=3(u^R-u_m)(2u^S+u^R-u_m)u^S_\xi+3(u^S-u_m)(u^S+2u^R-u_m)u^R_\xi-u^R_{\xi\xi}.
\end{equation*}

\subsection{Main results}

Let $f\in C^3(\mathbb R)$ and set $q(u):=f(u)-u^3$. We measure the perturbation by the seminorm
\begin{equation}\label{eq:flux-perturbation-size}
\varepsilon_f:=\sup_{u\in\mathbb R}\frac{|q''(u)|}{u_m+|u|}+\lVert q'''\rVert_{L^\infty(\mathbb R)}.
\end{equation}
The denominator $u_m+|u|$ makes this seminorm a global measure of closeness compatible with cubic growth: $q''$ may grow linearly in $|u|$, as does $(u^3)''$, but only with a small coefficient, while $q'''$ remains uniformly small. In particular, the class of fluxes for which $\varepsilon_f$ is sufficiently small contains all fluxes of the form
\begin{equation*}
f(u)=(1+\theta_3)u^3+\theta_2u_mu^2+au+b,
\end{equation*}
with $|\theta_2|+|\theta_3|$ sufficiently small and $a,b\in\mathbb R$ arbitrary; indeed, $\varepsilon_f\leq2|\theta_2|+12|\theta_3|$. The seminorm $\varepsilon_f$ does not measure constant or linear perturbations of the flux: an additive constant has no effect on the equation, while a linear perturbation can be removed by a change of moving frame.

\subsubsection{Uniform $L^2$ stability of the composite wave}

Before stating our first main result, we recall the reference composite profile $\widetilde u$ defined in \eqref{eq:composite-profile} and introduce the notion of a strong solution. For $T>0$, we say that $u$ is a strong solution of \eqref{eq:model}--\eqref{eq:initial-data} on $[0,T]$ if
\begin{equation} \label{strong_regularity}
u-\widetilde u(0,\cdot)\in C([0,T];H^1(\mathbb R))\cap L^2(0,T;H^2(\mathbb R)),\qquad
u_t\in L^2(0,T;L^2(\mathbb R)),
\end{equation}
and \eqref{eq:model} holds in $L^2(\mathbb R)$ for almost every $t\in(0,T)$, with $u(0)=u_0$.

\begin{theorem} \label{mainthm}
Fix $u_m>0$. There exist constants $\varepsilon_0>0$, $\delta_0>0$, and $C>0$ such that the following statement holds.

Let \(f\in C^3(\mathbb R)\) satisfy \(\varepsilon_f\leq\varepsilon_0\). Then there exists a unique state \(u_-\in(-5u_m/2,-3u_m/2)\) satisfying \eqref{eq:perturbed-degenerate-compatibility}. For this state, there exists a unique viscous shock profile \(u^S\in C^2(\mathbb R)\) satisfying \eqref{eq:shock-ode}. For any \(\delta_R>0\) with \(\delta_R/u_m\leq\delta_0\), set \(u_+:=u_m+\delta_R\). Then \(f'\) is strictly increasing on \([u_m,u_+]\), so that \(f'(u_m)<f'(u_+)\) and the rarefaction wave \(\bar u^r\) introduced in Section~\ref{sec:1.2} is well defined. Let \(\widetilde u\) denote the reference composite profile defined in Section~\ref{sec:1.3}.

Assume that $u_0-\widetilde u(0,\cdot)\in H^1(\mathbb R)$. Then the Cauchy problem \eqref{eq:model}--\eqref{eq:initial-data} admits a unique global strong solution $u$. Moreover, there exists a shift function $X\in C^1([0,\infty))$ with $X(0)=0$ such that
\begin{equation}\label{eq:normalized-contraction-power}
\begin{split}
&\left\|u(t,\cdot) - \left( u^S(\cdot-\sigma t + X(t)) + \bar{u}^r \left(\frac{\cdot}{t}\right) - u_m \right) \right\|_{L^2(\mathbb R)}^2 \\
& \qquad \leq  C \left(\left\|u_0-\widetilde u(0,\cdot)\right\|_{L^2(\mathbb R)}^2 + (\delta_R/u_m)^{8/33}\right)
\end{split}
\end{equation}
for all $t>0$. Furthermore, the shift \(X\) satisfies
\begin{equation}\label{eq:shift-velocity-bound}
\int_0^\infty |\dot X(t)|^2\,dt \leq C\left(\left\|u_0-\widetilde u(0,\cdot)\right\|_{L^2(\mathbb R)}^2+\left(\delta_R/u_m\right)^{8/33}\right).
\end{equation}
\end{theorem}

\begin{remark}[Contraction for viscous degenerate Oleinik shocks]
In the case $\delta_R=0$, our analysis yields a weighted $L^2$-contraction of a single viscous degenerate Oleinik shock. More precisely, there exists a shift $X^S$, with $X^S(0)=0$, such that
\begin{equation*}
\mathcal E^S(t)\leq \mathcal E^S(0)
\end{equation*}
for all $t\geq0$, where
\begin{equation*}
\mathcal E^S(t):=\frac12\int_{\mathbb R}\bigl(5u_m-u^S(x-\sigma t+X^S(t))\bigr)\bigl(u(t,x)-u^S(x-\sigma t+X^S(t))\bigr)^2\,dx.
\end{equation*}
This follows from the argument establishing the a priori estimate \eqref{eq:large-L2-apriori} in Proposition~\ref{prop:large-L2-apriori}: when $\delta_R=0$, we have $u^R\equiv u_m$, $\widetilde u=u^S$, and $F\equiv Q\equiv0$. The argument applies with $\eta=0$ in this case, since the positivity of $\eta$ is needed only to obtain strict coercivity of the rarefaction block, which is absent here; see Lemma~\ref{lem:cubic-rarefaction-coercivity}.
\end{remark}

\begin{remark}[Recovery of asymptotic stability for small perturbations]
Although Theorem~\ref{mainthm} itself is formulated as a uniform estimate for large perturbations, the coercivity analysis developed here can be combined with the small-perturbation framework of \cite{HWZ} to recover their asymptotic stability result. In fact, the quadratic form arising from the direct linearization is $\mathcal H$ plus nonnegative quadratic terms. Therefore, the coercivity of $\mathcal H$ established by Proposition~\ref{Poincare} and Lemma~\ref{lem:shock-endpoint-recovery} implies the linear coercivity required in the small-perturbation analysis.

More precisely, the linear coercivity estimate can be used to simplify the coercivity argument in \cite[Lemmas~4.3--4.5]{HWZ} by avoiding the truncation of the shock profile and the separate treatment of the algebraic tail. With this simplification, the remaining argument follows the framework of \cite{HWZ}: the $H^1$ estimate is closed by absorbing the nonlinear terms through the smallness of the perturbation and controlling the wave-interaction errors. Their time-asymptotic argument then yields the asymptotic stability of the composite wave.
\end{remark}

\subsubsection{$L^2$ stability in vanishing-viscosity limits}

The normalized estimate in Theorem~\ref{mainthm} is compatible with the parabolic scaling and therefore yields estimates for equations with arbitrary viscosity. For $\nu>0$, consider
\begin{equation}\label{eq:model-nu}
u^\nu_t+f(u^\nu)_x=\nu u^\nu_{xx},\qquad u^\nu(0,x)=u_0^\nu(x),\qquad t>0,\quad x\in\mathbb R.
\end{equation}
Under the parabolic scaling, the centered self-similar rarefaction wave is unchanged, while the viscous shock profile is rescaled. As $\nu\to0$, the rescaled viscous shock approaches the corresponding inviscid shock. Consequently, the reference profile $\widetilde u$, after rescaling, converges to the Riemann solution of the inviscid conservation law \eqref{eq:inviscid-equation}.

For the admissible fluxes, the Riemann solution of the inviscid conservation law \eqref{eq:inviscid-equation} with initial states $(u_-,u_+)$ consists of an Oleinik shock $\bar u^s$ connecting $u_-$ to $u_m$ and the rarefaction wave $\bar u^r(x/t)$ connecting $u_m$ to $u_+$. Define
\begin{equation*}
\bar u^s(x-\sigma t):=
\begin{cases}
u_-,&x<\sigma t,\\
u_m,&x\geq\sigma t.
\end{cases}
\end{equation*}
Then the Riemann solution is given by
\begin{equation*}
\bar u(t,x):=\bar u^s(x-\sigma t)+\bar u^r\left(\frac{x}{t}\right)-u_m,\qquad t>0,
\end{equation*}
with initial datum
\begin{equation*}
\bar u_0(x):=\begin{cases}u_-,&x<0,\\u_+,&x>0.\end{cases}
\end{equation*}

The following theorem establishes the stability and uniqueness of the Riemann solution $\bar u$ of \eqref{eq:inviscid-equation}.

\begin{theorem}\label{cor:inviscid-stability}
Fix $u_m>0$, and let $\varepsilon_0>0$ and $\delta_0>0$ be chosen as in Theorem~\ref{mainthm}. Let $f\in C^3(\mathbb R)$ satisfy $\varepsilon_f\leq\varepsilon_0$, let $u_-$ and $\sigma$ be the corresponding left state and shock speed, and set $u_+:=u_m+\delta_R$ for $0<\delta_R/u_m\leq\delta_0$.

Then, for any initial datum $u_0$ of \eqref{eq:inviscid-equation} satisfying
\begin{equation*}
u_0-\bar u_0\in L^2(\mathbb R),
\end{equation*}
the following is true.

\begin{enumerate}[(i)]
\item There exists a sequence of smooth functions $\{u_0^\nu\}_{\nu>0}$ such that
\begin{equation}\label{eq:well-prepared-initial-data}
u_0^\nu-\widetilde u(0,\cdot/\nu)\in H^1(\mathbb R),\qquad \lim_{\nu\to0}\|u_0^\nu-u_0\|_{L^2(\mathbb R)}=0.
\end{equation}
\item Let $\{u_0^\nu\}_{\nu>0}$ be any sequence of smooth initial data satisfying \eqref{eq:well-prepared-initial-data}, and let $\{u^\nu\}_{\nu>0}$ be the corresponding global strong solutions to \eqref{eq:model-nu}. Then there exists a limit $u_\infty$ such that, as $\nu\to0$ up to a subsequence,
\begin{equation}\label{eq:weak-inviscid-limit}
u^\nu\rightharpoonup u_\infty\quad\text{in }L^2_{\mathrm{loc}}((0,T)\times\mathbb R)
\end{equation}
for any $T>0$.
\item For every subsequential limit $u_\infty$ as in \eqref{eq:weak-inviscid-limit}, there exist a shift $X_\infty\in H^1_{\mathrm{loc}}([0,\infty))$, with $X_\infty(0)=0$, and a constant $C>0$ such that, for any $T>0$,
\begin{equation}\label{eq:inviscid-unweighted-stability}
\left\|u_\infty(t)-\left(\bar u^s(\cdot-\sigma t+X_\infty(t))+\bar u^r\left(\frac{\cdot}{t}\right)-u_m\right)\right\|_{L^2(\mathbb R)}^2\leq C\|u_0-\bar u_0\|_{L^2(\mathbb R)}^2
\end{equation}
for almost every $t\in(0,T)$. Moreover, the shift $X_\infty$ satisfies
\begin{equation}\label{eq:inviscid-shift-bound}
|X_\infty(t)|\leq C\sqrt{t}\,\|u_0-\bar u_0\|_{L^2(\mathbb R)}
\end{equation}
for all $t\in[0,T]$.
\end{enumerate}

Therefore, the Riemann solution $\bar u$ of \eqref{eq:inviscid-equation} is orbitally stable and unique in the class of weak inviscid limits of solutions to \eqref{eq:model-nu}.
\end{theorem}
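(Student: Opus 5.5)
The plan is to deduce Theorem~\ref{cor:inviscid-stability} from Theorem~\ref{mainthm} by parabolic scaling, uniform bounds, and weak lower semicontinuity. If $u^\nu$ solves \eqref{eq:model-nu}, then $v(s,y):=u^\nu(\nu s,\nu y)$ solves \eqref{eq:model} with datum $v_0(y)=u_0^\nu(\nu y)$. The rarefaction $\bar u^r(y/s)=\bar u^r(x/t)$ is scale invariant, and $u^S(y-\sigma s+X(s))=u^S((x-\sigma t+\nu X(t/\nu))/\nu)$. Changing variables in \eqref{eq:normalized-contraction-power}, which multiplies all $L^2$ norms squared by $\nu$, gives for all $t>0$
\begin{equation*}
\Bigl\|u^\nu(t)-\Bigl(u^S\bigl(\tfrac{\cdot-\sigma t+X_\nu(t)}{\nu}\bigr)+\bar u^r\bigl(\tfrac{\cdot}{t}\bigr)-u_m\Bigr)\Bigr\|_{L^2}^2\le C\Bigl(\|u_0^\nu-\widetilde u(0,\cdot/\nu)\|_{L^2}^2+\nu(\delta_R/u_m)^{8/33}\Bigr),
\end{equation*}
with $X_\nu(t):=\nu X(t/\nu)$. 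Similarly \eqref{eq:shift-velocity-bound} becomes $\int_0^\infty|\dot X_\nu|^2\,dt\le C\nu^{-1}\cdot\nu(\cdots)$; more precisely, $\dot X_\nu(t)=\dot X(t/\nu)$ and $\int|\dot X_\nu|^2dt=\nu\int|\dot X|^2ds$. So $\|\dot X_\nu\|_{L^2(0,\infty)}^2$ is bounded by the same right-hand side, uniformly in $\nu$.

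For (i), I mollify $u_0-\bar u_0$ at scale $\nu$ and add $\widetilde u(0,\cdot/\nu)$. One checks that $\widetilde u(0,\cdot/\nu)-\bar u_0\to0$ in $L^2$. This holds because $u^S(\cdot/\nu)-\bar u^s$ has $L^2$ norm squared of order $\nu\int|u^S-\bar u^s|^2$, which is finite since $(u^S-u_m)\sim 1/\xi$ is square integrable. The rarefaction part $u^R(0,\cdot/\nu)$ likewise converges to the step. In the same way the right-hand side above tends to $C\|u_0-\bar u_0\|_{L^2}^2$.

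For (ii), the bound shows that $u^\nu$ minus a uniformly $L^\infty$-bounded profile is bounded in $L^\infty(0,T;L^2)$, so $u^\nu$ is bounded in $L^2_{\mathrm{loc}}$ and a weakly convergent subsequence exists. For (iii), $X_\nu(0)=0$ and $\|\dot X_\nu\|_{L^2}$ is bounded, so $X_\nu$ is bounded in $H^1(0,T)$. Arzel\`a--Ascoli then gives $X_\nu\to X_\infty$ uniformly on $[0,T]$ along a further subsequence, with $X_\infty\in H^1$ and $X_\infty(0)=0$. Cauchy--Schwarz gives $|X_\nu(t)|\le\sqrt t\,\|\dot X_\nu\|_{L^2}$. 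Passing to the limit in this, using the liminf of the shift bound, yields \eqref{eq:inviscid-shift-bound}.

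The main obstacle is passing the stability estimate to the limit, since $u^\nu$ only converges weakly and the estimate holds pointwise in $t$. Uniform convergence of $X_\nu$ together with $\nu\to0$ makes $u^S((\cdot-\sigma t+X_\nu(t))/\nu)\to\bar u^s(\cdot-\sigma t+X_\infty(t))$ strongly in $L^2_{\mathrm{loc}}((0,T)\times\mathbb R)$, because the sharp profile converges away from the moving front. Hence the difference in the estimate converges weakly in $L^2_{\mathrm{loc}}$. I will integrate the pointwise bound against an arbitrary nonnegative $\varphi(t)\in L^1(0,T)$ over compact spatial sets and apply weak lower semicontinuity of the $L^2$ norm. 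This gives the time-integrated bound for every such $\varphi$, and hence the bound for a.e.\ $t$. Letting the spatial sets exhaust $\mathbb R$ yields \eqref{eq:inviscid-unweighted-stability}.
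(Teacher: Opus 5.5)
Your proposal is correct and follows essentially the same route as the paper. Both arguments use parabolic rescaling of Theorem~\ref{mainthm}, under which the unscaled $(\delta_R/u_m)^{8/33}$ term picks up a factor $\nu$, then weak $L^2_{\mathrm{loc}}$ compactness and $H^1(0,T)$ compactness of the rescaled shifts. They then use strong convergence of the rescaled shock profile; the paper gets this even in $L^\infty(0,T;L^2(\mathbb R))$ via $\|u^S(\cdot/\nu)-\bar u^s\|_{L^2}^2=\nu\|u^S-\bar u^s\|_{L^2}^2$ and $\|\bar u^s(\cdot+h)-\bar u^s\|_{L^2}^2=(u_m-u_-)^2|h|$. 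Both finish with time-localized weak lower semicontinuity.

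The remaining differences are cosmetic:
\begin{itemize}
\item In (i) the paper takes $u_0^\nu=u_0*\rho_\nu$. Your datum $(u_0-\bar u_0)*\rho_\nu+\widetilde u(0,\cdot/\nu)$ works, but it is only as regular as $\widetilde u(0,\cdot)$, which is $C^2$ in general for $f\in C^3$. Mollify the whole datum if you want literally smooth functions.
\item You leave the final uniqueness claim implicit. It follows by taking $u_0=\bar u_0$, so that the shift bound forces $X_\infty\equiv0$ and hence $u_\infty=\bar u$.
\end{itemize}
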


\subsection{Main ideas and outline of the paper} \label{sec:intro-main-ideas}

The proof proceeds in two main stages. First, we work with the viscosity-normalized equation \eqref{eq:model} and derive an $L^2$ estimate relative to a suitably shifted reference composite profile $\widetilde u$, from which the stability estimate in Theorem~\ref{mainthm} follows. Second, we restore the viscosity scale and take the vanishing-viscosity limit, obtaining Theorem~\ref{cor:inviscid-stability} for the inviscid composite wave.

The main point in the first stage is to identify the nonlinear coercive structure of the weighted energy:
\begin{equation*}
\mathcal E(t) := \frac12 \int_\mathbb{R} \big( (5-\eta)u_m - \widetilde u \big) \phi^2 \, d\xi, \qquad \text{where} \quad  \phi(t,\xi):=u(t,\xi+\sigma t-X(t))-\widetilde u(t,\xi).
\end{equation*}
Here, $\xi=x-\sigma t$ and $\eta>0$ is a sufficiently small parameter. The first observation is that the weight in $\mathcal{E}$ is adapted to the large-perturbation problem for the composite wave. Its dependence on the full profile places the nonlinear contributions of both waves in a common algebraic structure. This provides the basic structure underlying the nonlinear coercivity, in which both the shock and the rarefaction contributions are exploited.

This structure is first exposed for the canonical cubic flux $f_0(u)=u^3$. For this flux, the weighted shock and rarefaction contributions admit exact completions of squares. In the shock contribution, the completion separates the nonlinear functional into nonnegative square terms and a remaining quadratic form. This separation is the essential point of the cubic calculation: once the nonlinear terms have been organized, the coercivity problem is reduced to understanding a single quadratic contribution associated with the shock.

To analyze this quadratic form, we introduce the shock-profile coordinate
\begin{equation*}
y:= \frac{U^S-U_-}{u_m-U_-} \in(0,1), \qquad \Phi(y):= \frac{\phi(\xi(y))}{u_m}
\end{equation*}
for $U^S$ and $U_-$ described in Section~\ref{sec:1.1}. This change of variables compactifies the whole-line problem and rewrites the quadratic form in an endpoint-degenerate Sturm--Liouville form. In particular, the algebraic tail of the shock appears as a degeneracy of the differential weight at $y=1$, and the standard weighted Poincar\'e inequality underlying the $a$-contraction method does not control this endpoint degeneracy. We reveal that the appropriate replacement is a Hardy--Poincar\'e inequality: there exists a constant $c_P>1$ such that
\begin{equation*}
\begin{split}
& \int_0^1 (7-3y)y(1-y)^2 \Phi_y^2\,dy + \frac{10}{3}\left(\int_0^1 (3-2y) \Phi\,dy\right)^2 \geq c_P \int_0^1 (1-y)(12-y) \Phi^2\,dy.
\end{split}
\end{equation*}
The rank-one moment term is generated by the shift and controls the one-dimensional obstruction associated with translation invariance; in the compactified quadratic form, the unstable direction is represented by the constants. The remaining component is controlled through a Hardy inequality adapted to the endpoint degeneracies. Thus the shift and the Hardy estimate have distinct and complementary roles: the former removes the finite-dimensional obstruction, while the latter supplies coercivity at the degenerate endpoint. Combined with the exact nonlinear decomposition, this yields a strict nonlinear coercivity estimate for the shock contribution, together with the corresponding coercive contribution from the rarefaction.

The strictness of this estimate is what allows the argument to move beyond the exact cubic flux. Writing $q:=f-f_0$, we measure the perturbation by
\begin{equation*}
\varepsilon_f := \sup_{u\in\mathbb R} \frac{|q''(u)|}{u_m+|u|} + \lVert q''' \rVert_{L^\infty(\mathbb R)}.
\end{equation*}
This seminorm is adapted to the quantities that arise in the nonlinear energy identity and in the factorization of the degenerate shock profile. It ignores affine changes of the flux, which do not affect the relevant nonlinear structure, and controls precisely the additional terms produced by a perturbation of the cubic flux. The strict coercivity gap for the reference problem absorbs these terms when $\varepsilon_f$ is sufficiently small. In this way, the exact cubic calculation identifies the coercive structure, while the result itself is formulated for an open class of fluxes for which that structure persists uniformly.

Once the nonlinear coercivity is established, the remaining difficulty is the interaction of the weak rarefaction with the algebraic tail of the degenerate shock. The two wave-interaction estimates needed here were obtained for the cubic flux in \cite[Lemma~4.8]{HWZ}. They rely on the shock tail bounds and the standard decay estimates for the approximate rarefaction. We show that these estimates remain uniform over the admissible flux class, and use them to derive the required residual bounds. The normalized estimate in Theorem~\ref{mainthm} is therefore obtained by combining the strict nonlinear coercivity with these interaction and residual estimates in the $L^2$-equivalent energy.

To prove Theorem~\ref{cor:inviscid-stability}, we follow the inviscid-limit argument of \cite[Section~5]{KV2}. We construct well-prepared initial data and use the viscosity-uniform estimate to obtain subsequential weak $L^2_{\mathrm{loc}}$ limits of the viscous solutions. The uniform stability and shift estimates then yield the $L^2$ stability estimate for the inviscid limits, while the zero-perturbation case gives uniqueness of the Riemann solution within this class.

The paper is organized as follows. The preliminary section establishes the uniform structure and tail bounds of the perturbed degenerate shock, records the properties of the approximate rarefaction wave, and states the degenerate Hardy--Poincar\'e inequality. Section~\ref{sec:3} derives the weighted energy identity and separates the shock, rarefaction, interaction, and residual contributions. Section~\ref{sec:4} identifies the nonlinear coercive mechanism for the cubic flux and proves its persistence under perturbations of the flux. Section~\ref{sec:5} establishes the uniform wave-interaction estimates and closes the a priori estimate. Section~\ref{sec:proof-mainthm} proves Theorems~\ref{mainthm} and~\ref{cor:inviscid-stability}. The appendices provide the proofs deferred from Section~2 and establish the global well-posedness result used in the proof of Theorem~\ref{mainthm}.

\section{Preliminaries}
In this section, we record the structural and quantitative properties of the component waves and state the degenerate Hardy--Poincar\'e inequality.

\subsection{Structure and properties of the degenerate viscous shock} \label{sec:shock-properties}

The following two lemmas establish the existence and uniqueness of the left state \(u_-\) and the corresponding viscous shock profile asserted in Theorem~\ref{mainthm}. They also record the factorization and pointwise bounds used in the coercivity and wave-interaction estimates.

\begin{lemma}[Persistence of the degenerate structure] \label{lem:perturbed-degenerate-profile}
There exist constants $\varepsilon_0>0$, $c>0$, and $C>0$ such that, if $\varepsilon_f\leq\varepsilon_0$, then there exists a unique state
\begin{equation*}
u_-\in(-5u_m/2,-3u_m/2)
\end{equation*}
satisfying \eqref{eq:perturbed-degenerate-compatibility}. Moreover, 
\begin{equation}\label{eq:perturbed-left-state}
|u_-+2u_m|\leq C\varepsilon_fu_m,
\end{equation}
and there exists a function \(b\in C([u_-,u_m])\) such that
\begin{equation}\label{eq:perturbed-shock-factorization}
f(u)-f(u_m)-f'(u_m)(u-u_m)=(u-u_-)(u-u_m)^2b(u),\qquad u\in[u_-,u_m],
\end{equation}
with
\begin{equation}\label{eq:perturbed-factor-bound}
|b(u)-1|\leq C\varepsilon_f,\qquad \frac12\leq b(u)\leq\frac32.
\end{equation}
Furthermore,
\begin{equation}\label{eq:rarefaction-convexity}
f''(u)\geq cu_m \quad \text{for all } u \geq u_m.
\end{equation} 
\end{lemma}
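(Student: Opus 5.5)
The approach is to rescale by $u_m$ and treat everything as a perturbation of the exact cubic computation. Set $R(u):=f(u)-f(u_m)-f'(u_m)(u-u_m)$. For $f_0$ we have $R_0(u)=u^3-u_m^3-3u_m^2(u-u_m)=(u-u_m)^2(u+2u_m)$. In general, Taylor's formula with integral remainder gives
\begin{equation*}
R(u)=(u-u_m)^2\int_0^1(1-s)f''(u_m+s(u-u_m))\,ds.
\end{equation*}
Writing $f''=6u+q''$, this becomes $R(u)=(u-u_m)^2\bigl[(u+2u_m)+r(u)\bigr]$, where
\begin{equation*}
|r(u)|\leq\tfrac12\sup_{[u,u_m]}|q''|\leq C\varepsilon_f\,(u_m+|u|)\leq C\varepsilon_f u_m
\end{equation*}
for $u\in[-3u_m,u_m]$, by the definition \eqref{eq:flux-perturbation-size}. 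Set $g(u):=u+2u_m+r(u)$. The compatibility condition \eqref{eq:perturbed-degenerate-compatibility}, for $u_-\neq u_m$, is equivalent to $R(u_-)=0$, that is, to $g(u_-)=0$.

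\textbf{Existence and uniqueness of $u_-$.} On the interval $I:=[-5u_m/2,-3u_m/2]$ we have $g(-5u_m/2)\leq -u_m/2+C\varepsilon_fu_m<0$ and $g(-3u_m/2)>0$ once $\varepsilon_0$ is small. Existence then follows from the intermediate value theorem. For uniqueness we need $g'>0$ on $I$, and $r$ is differentiable: for $u\neq u_m$, $r(u)=\bigl(q(u)-q(u_m)-q'(u_m)(u-u_m)\bigr)/(u-u_m)^2$. The cleanest route is to write $r(u)=\int_0^1(1-s)q''(u_m+s(u-u_m))\,ds$ and differentiate under the integral:
\begin{equation*}
|r'(u)|\leq\int_0^1(1-s)s\,|q'''|\,ds\leq\tfrac16\|q'''\|_{L^\infty}\leq\varepsilon_f .
\end{equation*}
Hence $g'\geq1-\varepsilon_f>0$, so the zero is unique. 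Moreover, since $g(u_-)=0$ and $|g(u)-(u+2u_m)|=|r(u)|\leq C\varepsilon_fu_m$, we get $|u_-+2u_m|\leq C\varepsilon_f u_m/(1-\varepsilon_f)$, which is \eqref{eq:perturbed-left-state}.

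\textbf{Factorization.} Define $b(u):=g(u)/(u-u_-)$ for $u\neq u_-$, and $b(u_-):=g'(u_-)$. Then $b$ is continuous and \eqref{eq:perturbed-shock-factorization} holds. By the mean value theorem, $b(u)=g'(\zeta)=1+r'(\zeta)$ for some $\zeta$ between $u_-$ and $u$, so $|b-1|\leq\varepsilon_f$. This gives \eqref{eq:perturbed-factor-bound}, and also $\tfrac12\leq b\leq\tfrac32$ for $\varepsilon_0\leq\tfrac12$. Note that $[u_-,u_m]\subset[-3u_m,u_m]$, so the earlier bounds apply throughout.

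\textbf{Convexity.} For $u\geq u_m$ we have $f''(u)=6u+q''(u)\geq 6u-\varepsilon_f(u_m+u)\geq(6-2\varepsilon_f)u\geq 5u_m$. This gives \eqref{eq:rarefaction-convexity} with $c=5$.

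The only delicate point is the uniqueness step: it requires derivative control of the remainder $r$, which comes from $\|q'''\|_{L^\infty}$ rather than from the weighted $q''$ bound. Everything else is routine.
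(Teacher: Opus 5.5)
Your proposal is correct and follows essentially the paper's argument: the Taylor factorization $f(u)-f(u_m)-f'(u_m)(u-u_m)=(u-u_m)^2K(u)$ with $K=(u+2u_m)+\int_0^1(1-s)q''(u_m+s(u-u_m))\,ds$, uniqueness and localization of the zero from $|K'-1|\le\frac16\|q'''\|_{L^\infty}$, the factor $b=K/(u-u_-)$ controlled through $K'$, and convexity from $f''=6u+q''$. The differences are cosmetic. You obtain existence by the intermediate value theorem on $[-5u_m/2,-3u_m/2]$ rather than from global monotonicity and the limits at $\pm\infty$, and you use the mean value theorem instead of the integral representation of $b$; also, $|u_-+2u_m|=|r(u_-)|\le C\varepsilon_f u_m$ follows directly from $g(u_-)=0$, without dividing by $1-\varepsilon_f$.
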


\begin{lemma} [Existence and properties of the degenerate viscous shock profile] \label{lem:existence_shock}
Let $u_-$ be the state given in Lemma~\ref{lem:perturbed-degenerate-profile}. There exists a unique viscous shock profile \(u^S\in C^2(\mathbb R)\) satisfying \eqref{eq:shock-ode}. The profile $u^S$ satisfies
\begin{equation}\label{eq:perturbed-shock-ode}
u^S_\xi=(u^S-u_-)(u^S-u_m)^2b(u^S)>0,\qquad \xi\in\mathbb R.
\end{equation}
Moreover,
\begin{equation}\label{eq:perturbed-shock-left-tail}
0<u^S(\xi)-u_-\leq C\delta_Se^{-c\delta_S^2|\xi|},\qquad 0<u^S_\xi(\xi)\leq C\delta_S^3e^{-c\delta_S^2|\xi|},\qquad \xi\leq0,
\end{equation}
and
\begin{equation}\label{eq:perturbed-shock-right-tail}
0<u_m-u^S(\xi)\leq\frac{C\delta_S}{1+c\delta_S^2\xi},\qquad 0<u^S_\xi(\xi)\leq\frac{C\delta_S^3}{(1+c\delta_S^2\xi)^2},\qquad \xi\geq0,
\end{equation}
where \(\delta_S:=u_m-u_-\).
\end{lemma}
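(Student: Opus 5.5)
We reduce the profile problem to a scalar autonomous ODE. Any solution of \eqref{eq:shock-ode} with the prescribed limits must satisfy $u^S_\xi\to0$ as $\xi\to+\infty$. Integrating once from $+\infty$ therefore gives
\begin{equation*}
u^S_\xi=g(u^S),\qquad g(u):=f(u)-f(u_m)-f'(u_m)(u-u_m).
\end{equation*}
By Lemma~\ref{lem:perturbed-degenerate-profile} we have $g(u)=(u-u_-)(u-u_m)^2b(u)$ on $[u_-,u_m]$ with $b\ge 1/2$. Hence $g>0$ on $(u_-,u_m)$, and $g$ vanishes exactly at the two endpoints. Since $f\in C^3$, $g$ is $C^1$ and thus locally Lipschitz.

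For existence we solve the initial value problem $u_\xi=g(u)$, $u(0)=(u_-+u_m)/2$. The constant states $u_-$ and $u_m$ are equilibria, so by uniqueness for Lipschitz ODEs the solution stays in $(u_-,u_m)$ and exists globally. It is strictly increasing, so it has limits at $\pm\infty$, and these limits must be zeros of $g$; they are therefore $u_-$ and $u_m$. Differentiating $u_\xi=g(u)$ once shows $u\in C^2$ and that \eqref{eq:shock-ode} holds.

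For uniqueness, any $C^2$ solution of \eqref{eq:shock-ode} satisfies the same first-order ODE (after integrating from $+\infty$) with the same value at $0$. Lipschitz uniqueness then gives the claim. This also yields \eqref{eq:perturbed-shock-ode}, with positivity because $u^S\in(u_-,u_m)$.

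For the tail bounds, we use $1/2\le b\le 3/2$ to compare with the model ODEs. On $\xi\le0$ we have $u^S\le (u_-+u_m)/2$, so $(u^S-u_m)^2\ge\delta_S^2/4$. Setting $v:=u^S-u_-$, this gives $v_\xi\ge \tfrac18\delta_S^2 v$. Integrating from $\xi$ to $0$ yields $v(\xi)\le v(0)e^{-c\delta_S^2|\xi|}$ with $v(0)=\delta_S/2$. The derivative bound follows from $u^S_\xi\le \tfrac32 v\,\delta_S^2$.

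On $\xi\ge0$ we set $w:=u_m-u^S\in(0,\delta_S/2]$. Here $u^S-u_-\ge\delta_S/2$, so $w_\xi=-(u^S-u_-)w^2b\le -\tfrac14\delta_S w^2$. Therefore $(1/w)_\xi\ge \delta_S/4$, which gives
\begin{equation*}
\frac1{w(\xi)}\ge\frac{2}{\delta_S}+\frac{\delta_S\xi}{4},\qquad\text{i.e.}\qquad w\le \frac{C\delta_S}{1+c\delta_S^2\xi}.
\end{equation*}
Then $u^S_\xi\le \tfrac32\delta_S w^2$ gives the $\delta_S^3(1+c\delta_S^2\xi)^{-2}$ bound. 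All constants are uniform because $\delta_S\in(5u_m/2,7u_m/2)$ and $b$ is bounded above and below uniformly.

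The only mildly delicate point is justifying that any $C^2$ solution of \eqref{eq:shock-ode} has $u^S_\xi\to0$ at $+\infty$, which is needed for the integration. To handle it, we integrate the second-order equation over $[\xi_1,\xi_2]$ to get $u^S_\xi(\xi_2)-u^S_\xi(\xi_1)=\int_{\xi_1}^{\xi_2}\bigl(f'(u^S)-\sigma\bigr)u^S_\xi$. This shows that $u^S_\xi-g(u^S)$ is constant. Since $u^S$ converges at $+\infty$, $g(u^S)\to g(u_m)=0$ there. The constant value of $u^S_\xi-g(u^S)$ is therefore the limit of $u^S_\xi$, which must be zero because $u^S$ is bounded. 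Hence the constant is $0$.
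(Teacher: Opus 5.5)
Your proposal is correct and follows the paper's proof. Both reduce \eqref{eq:shock-ode} to the first-order equation $u^S_\xi=g(u^S)$ (the paper's $H$) and get the tails from the same comparisons: a linear differential inequality for $u^S-u_-$ on $\xi\le0$, and $\bigl(1/(u_m-u^S)\bigr)_\xi\ge c\,\delta_S$ on $\xi\ge0$. The only difference is in existence and uniqueness: you prove them directly by Picard--Lindel\"of and a phase-line argument, and you justify $u^S_\xi\to0$ at $+\infty$ via the conserved quantity $u^S_\xi-g(u^S)$, whereas the paper cites standard profile theory under the Oleinik condition and integrates from $+\infty$ without comment, so your version is simply more self-contained.
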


The proofs of Lemmas~\ref{lem:perturbed-degenerate-profile}--\ref{lem:existence_shock} are given in  Appendix~\ref{app:profile}.

\subsection{Estimates for the approximate rarefaction wave}

Set
\begin{equation*}
\lambda_R:=f'(u_+)-f'(u_m)>0.
\end{equation*}
We denote by \(u^r=u^r(t,\xi)\) the exact rarefaction wave at time \(1+t\), expressed in the shock frame:
\begin{equation}\label{eq:exact-rarefaction-shock-frame}
u^r(t,\xi):=
\begin{cases}
u_m,&\xi\leq0,\\ 
(f')^{-1}\!\left(f'(u_m)+\dfrac{\xi}{1+t}\right),&0\leq\xi\leq\lambda_R(1+t),\\ u_+,&\xi\geq\lambda_R(1+t).
\end{cases}
\end{equation}
The following lemma records the derivative, tail, and approximation estimates for \(u^R\).

\begin{lemma}[Properties of the approximate rarefaction wave]\label{lem:approximate-rarefaction}
There exist constants $\varepsilon_0>0$ and $\delta_0>0$ such that, if $\varepsilon_f\leq\varepsilon_0$ and $\delta_R/u_m\leq\delta_0$, then the approximate rarefaction wave $u^R$ defined by \eqref{eq:burgers-rarefaction}, \eqref{def_rarefaction}, and \eqref{eq:rarefaction-shock-frame} satisfies the following properties:
\begin{enumerate}[(i)]
\item For all \(t\geq0\) and \(\xi\in\mathbb R\),
\begin{equation*}
u_m<u^R(t,\xi)<u_+,\qquad u^R_\xi(t,\xi)>0.
\end{equation*}
\item For any \(p\in[1,\infty]\), there exists \(C_p>0\) such that, for all \(t\geq0\),
\begin{equation}\label{eq:approx-rarefaction-derivatives}
\begin{split}
& \|u^R_\xi(t)\|_{L^p(\mathbb R)}\leq C_p\min\!\left\{\delta_R,\delta_R^{1/p}(1+t)^{-1+1/p}\right\},\\
& \|u^R_{\xi\xi}(t)\|_{L^p(\mathbb R)}\leq C_p\min\!\left\{\delta_R,(1+t)^{-1}\right\}.
\end{split}
\end{equation}
\item The approximate rarefaction wave converges uniformly to \(u^r\):
\begin{equation*}
\lim_{t\to\infty}\sup_{\xi\in\mathbb R}|u^R(t,\xi)-u^r(t,\xi)|=0.
\end{equation*}
\item There exists \(C>0\) such that, for all \(t\geq0\),
\begin{equation}\label{eq:approx-rarefaction-tails}
\begin{aligned}
& |u^R(t,\xi)-u_m|\leq C\delta_Re^{-2|\xi|}, & &\quad  \xi\leq0, \\
& |u^R(t,\xi)-u_+|\leq C\delta_Re^{-2|\xi-\lambda_R(1+t)|}, & &\quad  \xi\geq\lambda_R(1+t).
\end{aligned}
\end{equation}
\item For any \( \theta \in(0,1)\), there exists \(C_\theta>0\) such that, for all $t\geq0$,
\begin{equation}\label{eq:approx-rarefaction-refined-tails}
\begin{aligned}
& |u^R(t,\xi)-u_m|\leq C_\theta\delta_R^{2\theta/(2+\theta)}(1+t)^{-1+\theta}e^{-\theta|\xi|}, & & \quad \xi\leq0, \\
&|u^R(t,\xi)-u_+|\leq C_\theta\delta_R^{2\theta/(2+\theta)}(1+t)^{-1+\theta}e^{-\theta|\xi-\lambda_R(1+t)|}, & &\quad  \xi\geq\lambda_R(1+t).
\end{aligned}
\end{equation}
\item For any \(\theta\in(0,1)\), there exists \(C_\theta>0\) such that, for all \(t\geq0\),
\begin{equation}\label{eq:approx-rarefaction-fan-error}
|u^R(t,\xi)-u^r(t,\xi)|\leq C_\theta\delta_R^\theta(1+t)^{-1+\theta}, \quad 0\leq\xi\leq\lambda_R(1+t).
\end{equation}
\end{enumerate}
The constants \(C_p\), \(C\), and \(C_\theta\) are uniform over the admissible fluxes satisfying \(\varepsilon_f \leq \varepsilon_0\).
\end{lemma}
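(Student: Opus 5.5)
The plan is to reduce everything to the explicit Burgers problem \eqref{eq:burgers-rarefaction}, for which the classical estimates of Matsumura--Nishihara type are available, and then to transfer them to $u^R=(f')^{-1}(w^R)$ using the uniform convexity \eqref{eq:rarefaction-convexity}. Set $w_\pm$ for $\lambda_m,\lambda_+$, and note that $\lambda_R=\lambda_+-\lambda_m$. By \eqref{eq:rarefaction-convexity} and the bound on $f'''=6+q'''$, we have on $[u_m,u_+]$ the two-sided comparison
\begin{equation*}
c\,u_m\delta_R\leq\lambda_R\leq C u_m\delta_R .
\end{equation*}
The inverse map $g:=(f')^{-1}$ is $C^2$ on $[\lambda_m,\lambda_+]$ with $0<g'\leq (cu_m)^{-1}$ and $|g''|\leq C$, uniformly in the admissible class. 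Item (i) is then immediate. Indeed, $w^R$ solves Burgers with increasing data valued in $(\lambda_m,\lambda_+)$, so characteristics $x=x_0+tw_0(x_0)$ give $\lambda_m<w^R<\lambda_+$ and
\begin{equation*}
w^R_x=\frac{w_0'(x_0)}{1+tw_0'(x_0)}>0 .
\end{equation*}
Applying the increasing map $g$ gives $u_m<u^R<u_+$ and $u^R_\xi=g'(w^R)w^R_x>0$. The shift \eqref{eq:rarefaction-shock-frame} changes nothing here.

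For (ii), (iv) and (v) I would first prove the corresponding statements for $w^R(1+t,\cdot+\lambda_m(1+t))$ using the characteristic representation. Since $w_0'=\tfrac{\lambda_R}{2}\operatorname{sech}^2x_0$, we get:
\begin{itemize}
\item $\|w_x\|_{L^\infty}\leq\min\{\lambda_R,1/(1+t)\}$;
\item $\|w_x\|_{L^1}=\lambda_R$, and interpolation gives the $L^p$ bounds;
\item differentiating the formula once more gives $w_{xx}=w_0''/(1+tw_0')^3$, from which the $L^p$ bound $\min\{\lambda_R,(1+t)^{-1}\}$ follows by splitting into $|x_0|\lesssim1$ and the exponentially small tails.
\end{itemize}
Then chain rule gives $u^R_\xi=g'w_x$ and $u^R_{\xi\xi}=g'w_{xx}+g''w_x^2$, and the factor $\lambda_R\sim u_m\delta_R$ converts everything to powers of $\delta_R$ (constants may depend on the fixed $u_m$).

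The tail bound (iv) comes from writing $w^R-\lambda_m=w_0(x_0)-\lambda_m\lesssim\lambda_R e^{2x_0}$ on the left of the fan. Along characteristics, $x=x_0+tw_0(x_0)$ with $w_0\geq\lambda_m$ forces $x_0\leq x-\lambda_m t$. In the shock frame with time $1+t$ this gives $x_0\leq\xi$, hence $e^{-2|\xi|}$. The right tail is symmetric, and $|g'|$ bounded converts to $u^R$. For the refined tails (v) I would interpolate between (iv) and a time-decay bound. For $\xi\leq0$ one has
\begin{equation*}
|w-\lambda_m|\lesssim\lambda_R e^{2x_0}\quad\text{with}\quad x_0\leq \xi-(1+t)(w_0(x_0)-\lambda_m),
\end{equation*}
so either $w-\lambda_m$ is small or $e^{2x_0}\lesssim e^{-2(1+t)(w-\lambda_m)}$. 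Optimizing gives
\begin{equation*}
|w-\lambda_m|\lesssim\min\{\lambda_R e^{-2|\xi|},\ \log(\cdots)/(1+t)\}.
\end{equation*}
A weighted geometric mean $a^{\theta'}b^{1-\theta'}$ of the two bounds, with the logarithm absorbed by a slight loss in exponent, should produce $\delta_R^{2\theta/(2+\theta)}(1+t)^{-1+\theta}e^{-\theta|\xi|}$. Getting exactly this exponent bookkeeping right is the step I expect to be most fiddly, and I would solve for the interpolation exponent backwards from the target.

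Finally, for (vi) and (iii): inside the fan, $u^r=g(\lambda_m+\xi/(1+t))$, so it suffices to bound
\begin{equation*}
|w^R(1+t,x)-x/(1+t)|=\frac{|x_0+(1+t)w_0(x_0)-x|}{1+t}\cdot\frac{1}{1+t}\cdot(1+t)\ \ \text{(schematically)}.
\end{equation*}
From $x=x_0+(1+t)w_0(x_0)$ we get $|w^R-x/(1+t)|=|x_0|/(1+t)$. When $x$ is in the fan, $|x_0|$ is at most of order $\log(\lambda_R(1+t))$, since outside that range $w_0$ is exponentially close to $\lambda_m$ or $\lambda_+$. This yields $|w^R-x/(1+t)|\lesssim\min\{\lambda_R,\ (1+\log(1+t))/(1+t)\}$, and interpolating gives $C_\theta\delta_R^\theta(1+t)^{-1+\theta}$. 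Composing with the Lipschitz $g$ gives (vi), and (iii) follows from (v) and (vi) as $t\to\infty$. Uniformity over the admissible fluxes holds because all constants enter only through the bounds on $g',g''$ and the comparison $\lambda_R\sim u_m\delta_R$, which are uniform once $\varepsilon_f\leq\varepsilon_0$ and $\delta_R/u_m\leq\delta_0$ by Lemma~\ref{lem:perturbed-degenerate-profile}.
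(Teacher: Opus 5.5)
Your reduction matches the paper's. Everything is pushed onto the Burgers wave $w^R$ and transferred through $g=(f')^{-1}$, and uniformity in $f$ comes only from $cu_m\le f''\le Cu_m$ on $[u_m,u_+]$, the resulting bounds on $g'$ and $g''$, and $\lambda_R\sim u_m\delta_R$. The difference is in the Burgers estimates themselves. The paper takes them from \cite[Lemma~2.1]{MN} and \cite[Lemma~3.1]{HWZ} and only checks uniformity in $f$, whereas you rederive them by characteristics. Your route is self-contained and shows where the $\delta_R$-exponents come from.

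Your arguments for (i), the $w_x$ bounds, the chain rule in (ii), (iv), and (iii) are correct. Your plan for (v) also works, even with the crude bound $|w-\lambda_m|\lesssim(1+\log(1+t))/(1+t)$. Take the interpolation exponent $\theta'=2\theta/(2+\theta)$. Then $\theta/2\le\theta'<\theta$, so $e^{-2\theta'|\xi|}\le e^{-\theta|\xi|}$ and the logarithm is absorbed by $(1+t)^{\theta'-\theta}$. If you keep $\lambda_Re^{2\xi}$ inside the logarithm, i.e. use $We^{2(1+t)W}\le\lambda_Re^{2\xi}$ with $W=w-\lambda_m$, you even get $\delta_R^{\theta}$.

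Two steps need repair.

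\textbf{The $L^1$ bound on $w_{xx}$.} Splitting at $|x_0|\lesssim1$ does not give $(1+t)^{-1}$. When $\lambda_R(1+t)\gg1$, the crude tail bound $\int_{|x_0|>1}|w_0''|\,dx_0\sim\lambda_R$ is not $O((1+t)^{-1})$. To get that you must split where $(1+t)w_0'(x_0)\approx1$, i.e. at $|x_0|\approx\frac12\log(\lambda_R(1+t))$. A cleaner route with $s=1+t$: we have $\|w_{xx}(s)\|_{L^1}=\int|w_0''|(1+sw_0')^{-2}\,dx_0$. Since $w_0'$ is monotone on each half-line, substituting $a=w_0'(x_0)$ gives exactly $\lambda_R/(1+s\lambda_R/2)\le\min\{\lambda_R,2/s\}$. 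For $L^\infty$, use $|w_0''|=2|\tanh x_0|\,w_0'\le2w_0'$, and interpolate.

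\textbf{The fan estimate (vi).} Your displayed bound $\min\{\lambda_R,(1+\log(1+t))/(1+t)\}$ is too weak. At $1+t\approx\lambda_R^{-1}\log(1/\lambda_R)$ it equals $\lambda_R$. The target $\lambda_R^\theta(1+t)^{-1+\theta}$ is there about $\lambda_R(\log(1/\lambda_R))^{-(1-\theta)}$. So no $\delta_R$-independent $C_\theta$ can follow from it, and the constants must not depend on $\delta_R$, since the $\delta_R$-powers feed Lemma~\ref{lem:HWZ-profile-interactions} and Step~5 of the main proof.

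You had the right information when you said $|x_0|$ is of order $\log(\lambda_R(1+t))$; keep $\lambda_R$ in the logarithm. For $\lambda_m s\le x\le\lambda_+s$, the relation $x_0=x-sw_0(x_0)$ gives $|x_0|\le s\lambda_Re^{-2|x_0|}$, hence $2|x_0|\le\log(1+2\lambda_Rs)$. Then $|w^R-x/s|=|x_0|/s\le(2\lambda_Rs)^\theta/(2\theta s)$, using $\log(1+y)\le y^\theta/\theta$. This yields $C_\theta\lambda_R^\theta s^{-1+\theta}$ directly, with no interpolation needed.
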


\begin{proof}
The estimates in the statement follow from \cite[Lemma~3.1]{HWZ} by evaluating the approximate rarefaction wave at time \(1+t\) in the moving frame \(\xi=x-\sigma t\), as in \eqref{eq:rarefaction-shock-frame}. Thus, it remains only to verify that the constants in \eqref{eq:approx-rarefaction-derivatives} and \eqref{eq:approx-rarefaction-tails}--\eqref{eq:approx-rarefaction-fan-error} can be chosen independently of \(f\).

By \eqref{eq:flux-perturbation-size}, \eqref{eq:rarefaction-convexity}, and the assumption \(\delta_R/u_m\leq\delta_0\), there exist constants \(c>0\) and \(C>0\), independent of \(f\), such that
\begin{equation} \label{f''f''bd}
cu_m\leq f''(u)\leq Cu_m,\qquad |f'''(u)|\leq C,\qquad u\in[u_m,u_+].
\end{equation}
Let \(g=(f')^{-1}\). Then it follows from \eqref{f''f''bd} that
\begin{equation} \label{g'g''bd}
g'(w)=\frac{1}{f''(g(w))} \leq \frac{C}{u_m},\qquad |g''(w)|= \left|-\frac{f'''(g(w))}{(f''(g(w)))^3} \right| \leq \frac{C}{u_m^3}
\end{equation}
for \(w\in[f'(u_m),f'(u_+)]\). Moreover,
\begin{equation} \label{lambdaR}
cu_m\delta_R\leq\lambda_R=f'(u_+)-f'(u_m) =\int_{u_m}^{u_+}f''(v)\,dv\leq Cu_m\delta_R.
\end{equation}

We recall from \eqref{def_rarefaction} and \eqref{eq:rarefaction-shock-frame} that
\begin{equation*}
u^R(t,\xi)=g\!\left(w^R(1+t,\xi+\sigma(1+t))\right).
\end{equation*}
In particular,
\begin{equation*}
u^R_\xi=g'(w^R)w^R_x,\qquad
u^R_{\xi\xi}=g''(w^R)(w^R_x)^2+g'(w^R)w^R_{xx},
\end{equation*}
where \(w^R\), \(w^R_x\), and \(w^R_{xx}\) are evaluated at \((1+t,\xi+\sigma(1+t))\). Writing \(s=1+t\), the estimates in \cite[Lemma~2.1(ii)--(iii)]{MN}, together with \eqref{g'g''bd} and \eqref{lambdaR}, give
\begin{equation}\label{eq:uniform-rarefaction-first-derivative}
\begin{aligned}
\|u^R_\xi(t)\|_{L^p} &\leq \|g'(w^R)\|_{L^\infty}\|w^R_x(s)\|_{L^p} \leq \frac{C}{u_m}C_p\min\left\{\lambda_R,\lambda_R^{1/p}s^{-1+1/p}\right\}\\
&\leq C_p\min\left\{\delta_R,\delta_R^{1/p}s^{-1+1/p}\right\}.
\end{aligned}
\end{equation}
Similarly,
\begin{equation}\label{eq:uniform-rarefaction-second-derivative}
\begin{aligned}
\|u^R_{\xi\xi}(t)\|_{L^p}
&\leq \|g''(w^R)\|_{L^\infty}\|(w^R_x(s))^2\|_{L^p}
+\|g'(w^R)\|_{L^\infty}\|w^R_{xx}(s)\|_{L^p}\\
&\leq \frac{C}{u_m^3}\|w^R_x(s)\|_{L^\infty}\|w^R_x(s)\|_{L^p}
+\frac{C}{u_m}\|w^R_{xx}(s)\|_{L^p}\\
&\leq C_p\left(\frac{\lambda_R}{u_m^3}+\frac{1}{u_m}\right)
\min\left\{\lambda_R,s^{-1}\right\} \leq C_p\min\left\{\delta_R,s^{-1}\right\}.
\end{aligned}
\end{equation}
In the last lines of \eqref{eq:uniform-rarefaction-first-derivative} and \eqref{eq:uniform-rarefaction-second-derivative}, we used \eqref{lambdaR}; the additional factor \(\lambda_R\) appearing in \eqref{eq:uniform-rarefaction-second-derivative} is uniformly bounded since \(u_m\) is fixed and \(\delta_R/u_m\leq\delta_0\). Thus, the constant \(C_p\) in \eqref{eq:approx-rarefaction-derivatives} can be chosen independently of \(f\).

For \eqref{eq:approx-rarefaction-tails}--\eqref{eq:approx-rarefaction-fan-error}, note first that \(g(f'(u_*))=u_*\) for \(u_*\in\{u_m,u_+\}\). Hence, by the mean value theorem and \eqref{g'g''bd},
\begin{equation}\label{eq:rarefaction-endstate-comparison}
|u^R(t,\xi)-u_*|
\leq \frac{C}{u_m}\left|w^R(s,\xi+\sigma s)-f'(u_*)\right|,
\qquad u_*\in\{u_m,u_+\}.
\end{equation}
Moreover, by \eqref{eq:exact-rarefaction-shock-frame}, for \(0\leq\xi\leq\lambda_Rs\),
\begin{equation*}
f'(u^r(t,\xi))=f'(u_m)+\frac{\xi}{s}.
\end{equation*}
Therefore,
\begin{equation}\label{eq:rarefaction-fan-comparison}
|u^R(t,\xi)-u^r(t,\xi)|
\leq\frac{C}{u_m}
\left|w^R(s,\xi+\sigma s)-f'(u_m)-\frac{\xi}{s}\right|.
\end{equation}
Applying the estimates for \(w^R\) used in the proof of \cite[Lemma~3.1(4)--(6)]{HWZ} to the right-hand sides of \eqref{eq:rarefaction-endstate-comparison} and \eqref{eq:rarefaction-fan-comparison}, and using \eqref{lambdaR}, we find that \(C\) in \eqref{eq:approx-rarefaction-tails} and \(C_\theta\) in \eqref{eq:approx-rarefaction-refined-tails} and \eqref{eq:approx-rarefaction-fan-error} can be chosen uniformly in \(f\). This completes the proof.
\end{proof}

\subsection{Degenerate Hardy--Poincar\'e inequality}

The coercivity analysis uses the following degenerate Hardy--Poincar\'e inequality.

\begin{proposition}[Degenerate Hardy--Poincar\'e inequality]\label{Poincare}
There exists a constant \(c_P>1\) such that
\begin{equation*}
\int_0^1(7-3y)y(1-y)^2\Phi_y^2\,dy + \frac{10}{3}\left(\int_0^1(3-2y)\Phi\,dy\right)^2 \geq c_P \int_0^1(1-y)(12-y)\Phi^2\,dy
\end{equation*}
for any \(\Phi\in H^1_{\mathrm{loc}}(0,1)\cap L^1(0,1)\) satisfying
\begin{equation*}
\int_0^1(7-3y)y(1-y)^2\Phi_y^2\,dy < \infty,  \qquad \int_0^1(1-y)(12-y)\Phi^2\,dy<\infty.
\end{equation*}
\end{proposition}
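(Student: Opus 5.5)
The plan is to treat the left-hand side minus $c_P\int_0^1 w\Phi^2\,dy$ as a quadratic form. It is a rank-one perturbation of the degenerate Sturm--Liouville form with coefficients
\begin{equation*}
p(y):=(7-3y)y(1-y)^2,\qquad w(y):=(1-y)(12-y).
\end{equation*}
Two ingredients are needed: a spectral gap for $p$ against $w$ on the $w$-orthogonal complement of the constants, and control of the constant direction by the moment term.

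\textbf{Step 1: decomposition.} Set $W:=\int_0^1 w\,dy=35/6$, $m:=W^{-1}\int_0^1 w\Phi\,dy$ and $\psi:=\Phi-m$. Then $\int_0^1 w\psi\,dy=0$ and
\begin{equation*}
\int_0^1 w\Phi^2\,dy=Wm^2+\int_0^1 w\psi^2\,dy .
\end{equation*}
Since $\int_0^1(3-2y)\,dy=2$, we get $\int_0^1(3-2y)\Phi\,dy=2m+\beta$ with $\beta:=\int_0^1(3-2y)\psi\,dy$.

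\textbf{Step 2: bound on $\beta$.} A direct Cauchy--Schwarz bound of $\beta$ against $\int_0^1 w\psi^2\,dy$ fails, because $\int_0^1 (3-2y)^2/w\,dy$ diverges logarithmically at $y=1$. Instead I would use the orthogonality. With $\kappa:=2/W=12/35$, the function $g:=(3-2y)-\kappa w$ has zero mean, so $\beta=\int_0^1 g\psi\,dy$. Its antiderivative $G(y):=\int_0^y g$ vanishes at both endpoints, with $G=O(y)$ near $0$ and $G=O(1-y)$ near $1$. Integrating by parts, with boundary terms removed by a cutoff argument using the finiteness assumptions, gives $\beta=-\int_0^1 G\psi_y\,dy$. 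Hence
\begin{equation*}
\beta^2\leq K\int_0^1 p\psi_y^2\,dy,\qquad K:=\int_0^1 \frac{G^2}{p}\,dy<\infty .
\end{equation*}
The integral $K$ converges because $G^2/p\sim y$ near $0$ and $G^2/p$ is bounded near $1$. The constant $K$ is explicitly computable.

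\textbf{Step 3: spectral gap.} I need
\begin{equation*}
\int_0^1 p\psi_y^2\,dy\geq\lambda_1\int_0^1 w\psi^2\,dy\qquad\text{whenever }\int_0^1 w\psi\,dy=0,
\end{equation*}
with an explicit $\lambda_1>1$. Qualitatively, $w^{-1}\bigl(-(p\,\cdot_y)_y\bigr)$ has compact resolvent in $L^2(w\,dy)$. Near $y=0$ one has $p\sim7y$, $w\sim12$, and the form domain embeds compactly. Near $y=1$, write $x=1-y$; then $p\sim4x^2$, $w\sim11x$. The Hardy inequality $\int x^2\psi_x^2\geq\frac14\int\psi^2$, applied modulo constants, dominates $\int x\psi^2$ with a factor tending to zero. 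The indicial exponents $0,-1$ at $x=0$ together with finite energy force regular solutions. This rules out essential spectrum, so the kernel consists of the constants only.

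The quantitative bound $\lambda_1>1$ is the step I expect to be the main obstacle. I would first look for a lower bound by splitting $(0,1)$ at some $y_0$. On $(y_0,1)$ I would use the sharp weighted Hardy estimate, and on $(0,y_0)$ a Poincar\'e bound for $\int y\psi_y^2$, gluing the two with the orthogonality condition. If the constants are too lossy, I would instead obtain a rigorous lower bound by Temple's inequality, or by a comparison/oscillation argument on the ODE $-(p\psi_y)_y=\lambda w\psi$. In that approach I would check numerically that the first nonconstant regular solution has no zero crossing structure allowing $\lambda\leq1$; shooting from the regular singular point $y=0$ makes this computer-verifiable.

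\textbf{Step 4: conclusion.} Fix $c_P=1+\epsilon$ and put $E:=\int_0^1 p\psi_y^2\,dy$, so that $\int_0^1 w\psi^2\,dy\leq E/\lambda_1$. Using $(2m+\beta)^2\geq(1-\tau)4m^2-(\tau^{-1}-1)\beta^2$, the left side minus the right side is bounded below by
\begin{equation*}
\Bigl(1-\tfrac{c_P}{\lambda_1}-\tfrac{10}{3}(\tau^{-1}-1)K\Bigr)E+\Bigl(\tfrac{40}{3}(1-\tau)-c_PW\Bigr)m^2 .
\end{equation*}
The constant direction has a large margin: $40/3$ versus $W=35/6$. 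So I would take $\tau$ close to $1/2$, making the second bracket positive with room to spare. The first bracket then requires $\lambda_1>1$ with enough room to absorb $\tfrac{10}{3}(\tau^{-1}-1)K$. If this numerical budget is too tight, I would replace the mean splitting by the exact secular-equation analysis of the rank-one perturbation, which gives the sharp answer without Cauchy--Schwarz losses.
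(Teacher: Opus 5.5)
The overall structure of your proof matches the paper's. You split off the $w$-weighted mean and control $\beta$ through the primitive $G$ of $(3-2y)-\frac{12}{35}w$. This $G$ is exactly the paper's $G=-\frac1{35}y(1-y)(39-4y)$, and the paper bounds $K=\int_0^1G^2/p\,dy\le\frac{17}{150}$ by writing $G^2/p=y\rho(y)$ with $\rho(y)=\frac{(39-4y)^2}{1225(7-3y)}$ convex. The rank-one term then absorbs the constant direction.

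\textbf{The genuine gap is Step~3.} It carries all the quantitative content, and you leave it open. Moreover, $\lambda_1>1$ is not the condition you actually need: Step~4 requires
\[
\lambda_1^{-1}+\tfrac{10}{3}(\tau^{-1}-1)K<1,
\]
and the admissible window is narrow. The trial function $\psi=y-\frac{23}{70}$ is $w$-orthogonal to constants, and its Rayleigh quotient gives $\lambda_1\le\frac{406}{269}\approx1.51$. Since $K\approx0.109$, even with the best $\tau$ you still need $\lambda_1>1.39$. A split-and-glue argument is very unlikely to land in that window. Shooting, or Temple's inequality (which also needs a lower bound on the next eigenvalue), is not a proof without rigorous enclosures.

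The missing idea is to dualize. For $\int_0^1 w\psi\,dy=0$, put $F(y):=\int_0^y w\psi\,ds$. Then $F(0)=F(1)=0$, $F_y^2/w=w\psi^2$, and
\[
\int_0^1 w\psi^2\,dy=-\int_0^1 F\psi_y\,dy\le\Bigl(\int_0^1\frac{F^2}{p}\,dy\Bigr)^{1/2}\Bigl(\int_0^1 p\psi_y^2\,dy\Bigr)^{1/2}.
\]
So the spectral gap reduces to the Dirichlet Hardy inequality $\int_0^1F^2/p\,dy\le\frac{7}{10}\int_0^1F_y^2/w\,dy$. The paper proves this by the ground-state substitution $F=\tau h$ with $\tau=y(1-y)^2(10+y+y^2)$. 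Integrating by parts gives $\int F_y^2/w=\int\tau^2h_y^2/w+\int RF^2/p$, up to boundary terms that vanish. Here $R=-\frac{p}{\tau}(\tau_y/w)_y$ is an explicit rational function, and the numerator of $R-\frac{10}{7}$ is a positive combination of the monomials $y^j(1-y)^{4-j}$. Hence $\lambda_1\ge\frac{10}{7}$, with no numerics.

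\textbf{Step~4 also fails for the parameter you propose.} At $\tau=\frac12$ you need $\lambda_1^{-1}+\frac{10}{3}K<1$. But $\lambda_1^{-1}\ge\frac{269}{406}>0.662$. The tangent line of the convex $\rho$ at $0$ gives $K\ge\frac12\rho(0)+\frac13\rho'(0)>0.1018$, so the sum exceeds $1.002$.

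The margin of $\frac{40}{3}$ versus $\frac{35}{6}$ in the constant direction is therefore not room to spare. It must be spent almost entirely, by taking $\tau$ just below $\frac9{16}$ (with $\epsilon$ correspondingly small). This is the paper's exact completion of the square, which in your notation reads $\frac{10}{3}(2m+\beta)^2-\frac{35}{6}m^2=\frac{15}{2}\bigl(m+\frac89\beta\bigr)^2-\frac{70}{27}\beta^2$. With $\lambda_1\ge\frac{10}{7}$ and $K\le\frac{17}{150}$, the budget becomes $\frac{7}{10}+\frac{70}{27}\cdot\frac{17}{150}=\frac{161}{162}<1$. That leaves a margin of only $\frac1{162}$, which is why both the Hardy constant and the bound on $K$ must be fairly sharp.
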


For general criteria for weighted one-dimensional Poincar\'e inequalities and related Hardy-type inequalities, together with estimates for their optimal constants, we refer to \cite{CW}. For the given degenerate weights, a direct application of these general bounds does not establish the strict bound $c_P>1$ with the coefficient of the rank-one moment term fixed at $10/3$. We establish this bound through quantitative estimates adapted to these weights, thereby obtaining the positive coercivity gap needed in our analysis. The proof of Proposition~\ref{Poincare} requires a separate analysis and is therefore deferred to Appendix~\ref{sec:poincare}.

\section{Statement of the a priori estimate} \label{sec:3}

In this section, we state the a priori estimate and derive the weighted energy identity that underlies its proof.

\subsection{The a priori estimate} \label{sec:3.1}

Under the change of variables \(\xi=x-\sigma t\), the viscous conservation law \eqref{eq:model} takes the form
\begin{equation*}
u_t-\sigma u_\xi+f(u)_\xi=u_{\xi\xi}.
\end{equation*}
We define the perturbation around the reference composite profile \(\widetilde u\) by
\begin{equation}\label{eq:perturbation}
\phi(t,\xi):=u(t,\xi-X(t))-\widetilde u(t,\xi),
\end{equation}
where \(X\) is a shift to be specified below. By \eqref{eq:composite-residual-equation}, the perturbation $\phi$ satisfies
\begin{equation}\label{eq:perturbation-equation}
\phi_t-\sigma\phi_\xi+\left(f(\phi+\widetilde u)-f(\widetilde u)\right)_\xi+\dot X(\phi_\xi+\widetilde u_\xi)-\phi_{\xi\xi}=-F,
\end{equation}
where \(F\) is as in \eqref{eq:residual}.

We introduce the weight function, for a parameter $\eta>0$,
\begin{equation}\label{eq:weight}
a_\eta(t,\xi):=\alpha_\eta-\widetilde u(t,\xi), \qquad \text{where} \quad \alpha_\eta:=(5-\eta)u_m,
\end{equation}
and the associated weighted energy
\begin{equation}\label{eq:weighted-energy}
\mathcal E(t):=\frac12\int_{\mathbb R}a_\eta\phi^2\,d\xi.
\end{equation}
For sufficiently small \(\eta\) and \(\delta_R/u_m\), the bounds \(u_-\leq u^S\leq u_m\) and \(u_m<u^R<u_+\), together with \eqref{eq:perturbed-left-state}, imply
\begin{equation} \label{eq:weight-bounds}
c_\eta u_m\leq a_\eta(t,\xi)\leq C_\eta u_m,
\end{equation}
where \(c_\eta>0\) and \(C_\eta>0\) can be chosen independently of \(f\). Consequently,
\begin{equation} \label{eq:weighted-energy-equivalence}
\frac{c_\eta u_m}{2}\|\phi(t)\|_{L^2(\mathbb R)}^2\leq\mathcal E(t)\leq\frac{C_\eta u_m}{2}\|\phi(t)\|_{L^2(\mathbb R)}^2.
\end{equation}

We define the shift function \(X\) by
\begin{equation}\label{eq:shift-ode}
\dot X(t)=\kappa\mathcal Y(t),\qquad \mathcal Y(t):=\int_{\mathbb R}\widetilde u_\xi\left(a_\eta\phi+\frac12\phi^2\right) \, d\xi,
\end{equation}
with \(X(0)=0\), where \(\kappa>5/u_m\). For a strong solution \(u\) on \([0,T]\), the right-hand side of \eqref{eq:shift-ode}, viewed as a function of \((t,X)\) through \eqref{eq:perturbation}, is continuous in \(t\) and locally Lipschitz in \(X\). Hence, the Picard--Lindel\"of theorem yields a unique local \(C^1\) solution \(X\) of \eqref{eq:shift-ode}, which extends to \([0,T]\) by the boundedness of the right-hand side.

The a priori estimate is stated as follows.

\begin{proposition}[A priori estimate] \label{prop:large-L2-apriori}
Fix \(u_m>0\) and \(\kappa>5/u_m\). There exists \(\eta_0>0\) such that, for every \(0<\eta<\eta_0\), there exist constants \(\varepsilon_0>0\), \(\delta_0>0\), \(c>0\), and \(C>0\) for which the following statement holds.

Let $T>0$. Let \(f\in C^3(\mathbb R)\) satisfy \(\varepsilon_f\leq\varepsilon_0\), let $\widetilde u$ be the reference composite profile described in Section~\ref{sec:1.3}, and let \(u\) be a strong solution of \eqref{eq:model} on \([0,T]\). Let \(X\) solve \eqref{eq:shift-ode} with \(X(0)=0\). If \(0<\delta_R/u_m\leq\delta_0\), then
\begin{equation} \label{eq:large-L2-apriori}
\E(t)+c\int_0^t\D(s)\,ds\leq e^{CQ(t)}\bigl(\E(0)+CQ(t)\bigr)
\end{equation}
for all \(t\in[0,T]\), where $Q$ and $\mathcal{D}$ are defined by
\begin{align}
Q(t)&:=\int_0^t\left[\left(\int_{\mathbb R}|u^S-u_m|u^R_\xi\,d\xi\right)^2+\left(\int_{\mathbb R}|F|\,d\xi\right)^{4/3}\right] \, ds, \label{eq:interaction-size} \\
\mathcal D(t)&:=u_m\int_{\mathbb R}\phi_\xi^2\,d\xi+\int_{\mathbb R}\widetilde u_\xi\bigl(u_m^2\phi^2+\phi^4\bigr)\,d\xi+|\dot X(t)|^2. \label{eq:dissipation}
\end{align}
In particular,
\begin{equation}\label{eq:large-L2-apriori-power}
\E(t)+c\int_0^t\D(s)\,ds\leq e^{C(\delta_R/u_m)^{8/33}}\left(\E(0)+C(\delta_R/u_m)^{8/33}\right)
\end{equation}
for all \(t\in[0,T]\).
\end{proposition}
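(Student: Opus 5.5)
We prove the a priori estimate by computing $\frac{d}{dt}\E$ from the perturbation equation \eqref{eq:perturbation-equation}, arranging the result as minus a coercive functional plus controllable remainders, and closing with Gronwall.

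\textbf{Step 1: energy identity.} Multiply \eqref{eq:perturbation-equation} by $a_\eta\phi$ and integrate. Note that $\partial_t a_\eta=-\widetilde u_t=-u^R_t$ and $\partial_\xi a_\eta=-\widetilde u_\xi$. After integrating by parts, the shift terms combine as
\begin{equation*}
\dot X\int\left(a_\eta\phi\phi_\xi+a_\eta\widetilde u_\xi\phi\right)=\dot X\,\mathcal Y(t).
\end{equation*}
Here we used $\int a_\eta\phi\phi_\xi=\frac12\int\widetilde u_\xi\phi^2$. By \eqref{eq:shift-ode} this equals $\kappa^{-1}|\dot X|^2$, which yields the $-\frac1\kappa|\dot X|^2$ contribution. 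It is better to write this as
\begin{equation*}
\dot X\,\mathcal Y=-\frac{1}{\kappa}|\dot X|^2+2\dot X\,\mathcal Y,
\end{equation*}
and to retain $\frac{\kappa}{2}\mathcal Y^2$ in the good terms, choosing signs carefully; this is the origin of the rank-one moment term.

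The flux term $\int a_\eta\phi\bigl(f(\phi+\widetilde u)-f(\widetilde u)\bigr)_\xi$ and the viscous term $\int a_\eta\phi\phi_{\xi\xi}$ produce three kinds of contributions. First, the diffusion gives $-\int a_\eta\phi_\xi^2$. Second, there are cubic and quartic terms weighted by $\widetilde u_\xi=u^S_\xi+u^R_\xi$, obtained by writing the relative flux $f(\widetilde u+\phi)-f(\widetilde u)-f'(\widetilde u)\phi$ explicitly. For $f_0$ this is $3\widetilde u\phi^2+\phi^3$. Third, there is the residual pairing $-\int a_\eta\phi F$.

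We then split every $\widetilde u_\xi$-weighted integral into a shock block (weight $u^S_\xi$, with $\widetilde u$ replaced by $u^S$), a rarefaction block (weight $u^R_\xi$), and interaction errors. The interaction errors carry factors of $(u^R-u_m)u^S_\xi$ or $(u^S-u_m)u^R_\xi$.

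\textbf{Step 2: nonlinear coercivity for $f_0$.} In the shock block we use the specific weight $5u_m-u^S$, the choice that makes exact completion of squares possible. We expand the quartic polynomial in $\phi$, with coefficients depending on $u^S$, and complete squares. This writes the shock block plus $\frac{\kappa}{2}\mathcal Y^2$ as nonnegative terms of the form $u^S_\xi(\alpha\phi^2+\beta\phi+\gamma\cdots)^2$, plus a single quadratic form. The condition $\kappa>5/u_m$ enters here.

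We change variables to $y=(U^S-U_-)/(u_m-U_-)$, using $d\xi=dy\,/\bigl((u_m-U_-)\,U^S_\xi\bigr)$ together with \eqref{eq:cubic-shock-first-order}. The diffusion term then becomes $\int(7-3y)y(1-y)^2\Phi_y^2$, up to normalization, and the quadratic form becomes exactly the combination in Proposition~\ref{Poincare}. Since $c_P>1$, we obtain a strict gap
\begin{equation*}
(c_P-1)\int u^S_\xi\,u_m^2\phi^2\quad\text{plus a fraction of}\quad\int\phi_\xi^2 .
\end{equation*}
In the rarefaction block the weight is $\alpha_\eta-u^R\approx(4-\eta)u_m$, and the polynomial in $\phi$ is positive definite with margin proportional to $\eta$. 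This controls $\int u^R_\xi(u_m^2\phi^2+\phi^4)$; see the rarefaction coercivity lemma. The $\phi^4$ parts of $\mathcal D$ come from the completed squares.

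\textbf{Step 3: flux perturbation and remainders.} For general $f$, write $f=f_0+q$. By \eqref{eq:flux-perturbation-size} and Lemma~\ref{lem:perturbed-degenerate-profile}, the additional terms are bounded by $C\varepsilon_f\int\widetilde u_\xi(u_m^2\phi^2+\phi^4)$, and the factorization \eqref{eq:perturbed-shock-factorization} with $b\approx1$ perturbs the change of variables only by $O(\varepsilon_f)$. The strict gap absorbs both once $\varepsilon_0$ is small.

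The interaction errors are handled as follows. Use Lemma~\ref{lem:existence_shock} and \eqref{eq:approx-rarefaction-tails}, and bound $|\phi|^k$ by Gagliardo–Nirenberg, $\|\phi\|_\infty^2\le\|\phi\|\,\|\phi_\xi\|$. After Young's inequality this gives
\begin{equation*}
\tfrac{c}{4}\,\mathcal D+C\Bigl(\int|u^S-u_m|u^R_\xi\Bigr)^2(1+\E).
\end{equation*}
The residual pairing satisfies
\begin{equation*}
\Bigl|\int a_\eta\phi F\Bigr|\le C\|\phi\|_\infty\|F\|_{L^1}\le C\|\phi\|^{1/2}\|\phi_\xi\|^{1/2}\|F\|_{L^1},
\end{equation*}
which by Young's inequality is bounded by $\epsilon\|\phi_\xi\|^2+C\|F\|_{L^1}^{4/3}\E^{1/3}$. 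Finally $\E^{1/3}\le 1+\E$.

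\textbf{Step 4: closing.} Steps 1–3 give
\begin{equation*}
\E'+c\,\mathcal D\le Q'(t)\,(1+\E).
\end{equation*}
Gronwall's inequality then gives \eqref{eq:large-L2-apriori}. The power bound \eqref{eq:large-L2-apriori-power} follows by bounding $Q(\infty)\lesssim(\delta_R/u_m)^{8/33}$. This uses the interaction estimates of \cite[Lemma~4.8]{HWZ}, made uniform in $f$, together with $\|F\|_{L^1}$ bounds obtained from \eqref{eq:approx-rarefaction-refined-tails} and \eqref{eq:approx-rarefaction-derivatives} by optimizing over $\theta$. This optimization is where the exponent $8/33$ comes from.

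\textbf{Main obstacle.} The hardest step is Step 2: finding the exact square completion whose leftover quadratic form matches Proposition~\ref{Poincare}, including the coefficient $10/3$ produced by the shift. Closely tied to this is showing that the leftover form genuinely leaves a positive gap to absorb the $\varepsilon_f$ and interaction terms. As a first attempt, I would complete squares in $\phi$ pointwise, with coefficients polynomial in $u^S$, and test the leftover form on constants to fix the shift coefficient.
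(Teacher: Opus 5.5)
Your architecture matches the paper's: energy identity, the shock/rarefaction/interaction split, profile coordinate $y$, Proposition~\ref{Poincare}, the $\eta$-margin in the rarefaction block, the $O(\varepsilon_f)$ perturbation, Sobolev--Young bounds for the interaction and $F$ terms, and Gronwall. Step~2, however, has two genuine gaps.

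\textbf{First gap: the nonlinear shift term.} Since $\mathcal Y_S$ contains $\tfrac12\phi^2$, in profile coordinates you get $\frac5{u_m}(\mathcal Y_S^0)^2=45u_m^5(G+2M)^2$, where $G=\int_0^1 g\,dy$, $g=\tfrac12\Phi^2+(1+y)\Phi$ and $M=\int_0^1(3-2y)\Phi\,dy$. Pointwise completion of squares, as you propose, leaves $5(G+2M)^2$ untouched. That term is nonlocal and quartic, and it vanishes for $G=-2M$ with $M\neq0$. So neither pointwise completion nor testing on constants can turn it into the quadratic rank-one term of Proposition~\ref{Poincare}. The missing idea is to spend part of the pointwise square on the shift. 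One has $(15y-11)\Phi^2+(1+y)\Phi^3+\tfrac14\Phi^4=-(1-y)(12-y)\Phi^2+g^2$. Then use $\int_0^1g^2\,dy=\int_0^1(g-G)^2\,dy+G^2$ together with $G^2+5(G+2M)^2=6(G+\tfrac53M)^2+\tfrac{10}3M^2$.

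This identity is where the coefficient $10/3$ comes from, and it requires giving the shock block exactly $\frac5{u_m}(\mathcal Y_S^\eta)^2$, not $\frac\kappa2\mathcal Y^2$. A coefficient $k/u_m$ leaves only $\frac{4k}{1+k}M^2$, which is below $\tfrac{10}3M^2$ when $k<5$; there Proposition~\ref{Poincare} is not established. The strict excess $\kappa-5/u_m$ is then what controls $|\dot X|^2$ and the cross terms with the rarefaction part $\mathcal Z$ of $\mathcal Y$. Your rewriting $\dot X\mathcal Y=-\frac1\kappa|\dot X|^2+2\dot X\mathcal Y$ plays no role, because $\dot X\mathcal Y=\kappa\mathcal Y^2$ is already a good term.

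\textbf{Second gap: the gap from $c_P>1$ is degenerate.} You claim $c_P>1$ yields $(c_P-1)\int u^S_\xi u_m^2\phi^2$. It only yields $(c_P-1)\int_0^1(1-y)(12-y)\Phi^2\,dy$, whose weight vanishes at $y=1$, i.e.\ on the algebraic tail. In the $\xi$ variable this carries an extra factor $(u_m-U^S)/u_m$, which decays algebraically. The errors you absorb in Step~3 are not degenerate there:
\begin{itemize}
\item passing from $\eta=0$ to $\eta>0$ produces $-3\eta u_m\int U^S_\xi U^S\phi^2\,d\xi$, which is approximately $-3\eta u_m^2\int U^S_\xi\phi^2$ on the tail;
\item the $O(\varepsilon_f)$ coefficient changes, the $\delta_R$ cross terms and $\mathcal Z^2$ are all of size $\int u^S_\xi(u_m^2\phi^2+\phi^4)$.
\end{itemize}
You therefore need an extra step recovering the unweighted $\int_0^1\Phi^2\,dy$ from the retained fraction of $\int_0^1(7-3y)y(1-y)^2\Phi_y^2\,dy$, via a Hardy inequality at $y=1$. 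From $((1-y)\Phi^2)_y=-\Phi^2+2(1-y)\Phi\Phi_y$ one gets $\int_{1/2}^1\Phi^2\le\Phi(1/2)^2+4\int_{1/2}^1(1-y)^2\Phi_y^2$. The quartic control then follows from $\Phi^4\le 8g^2+32\Phi^2$, the terms $\int(g-G)^2$ and $(G+\tfrac53M)^2$, and $M^2\le\tfrac{13}3\int\Phi^2$. It does not come from the completed squares alone once $G$ has been subtracted. Without these two steps, the absorption in your Step~3 does not close.
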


\subsection{Energy identity}

We begin the proof of Proposition~\ref{prop:large-L2-apriori} by deriving an identity for the weighted energy \(\mathcal E\). We first record the regularity needed for the computation. Since
\begin{equation*}
\widetilde u(t,\cdot)-\widetilde u(0,\cdot)=u^R(t,\cdot)-u^R(0,\cdot),
\end{equation*}
the equation \eqref{eq:rarefaction-frame-equation}, together with the estimates \eqref{eq:approx-rarefaction-derivatives}, yields
\begin{equation*}\widetilde u-\widetilde u(0,\cdot)\in H^1(0,T;L^2(\mathbb R))\cap L^2(0,T;H^2(\mathbb R)).
\end{equation*}
Combining this with the regularity \eqref{strong_regularity} of a strong solution and $X \in C^1([0,T])$, we obtain
\begin{equation*}
\phi\in H^1(0,T;L^2(\mathbb R))\cap L^2(0,T;H^2(\mathbb R)).
\end{equation*}
Together with the smoothness of \(a_\eta\), this implies that \(t\mapsto\mathcal E(t)\) is absolutely continuous and justifies the spatial integrations by parts below for almost every \(t\in(0,T)\).

To state the energy identity in a convenient form, for a flux \(g\) and
\(v,z\in\mathbb R\), we define
\begin{equation}\label{eq:general-flux-nonlinear-form}
\mathcal B_g^\eta(v,z):=(\alpha_\eta-v+z)\bigl(g(v+z)-g(v)\bigr)-\int_0^z\bigl(g(v+s)-g(v)\bigr)\,ds-(\alpha_\eta-v)g'(v)z.
\end{equation}
With this notation, the weighted energy satisfies the following identity.

\begin{lemma}[Energy identity] \label{prop:energy-identity}
Under the assumptions in Proposition~\ref{prop:large-L2-apriori}, it holds that
\begin{equation}\label{eq:weighted-energy-identity}
\begin{split}
\frac{d}{dt}\mathcal E + \int_{\mathbb R}a_\eta\phi_\xi^2 \, d\xi +\frac12 \int_{\mathbb R}\left(\widetilde u_t-\sigma\widetilde u_\xi+\widetilde u_{\xi\xi}\right)\phi^2 \, d\xi +\int_{\mathbb R}\widetilde u_\xi\mathcal B_f^\eta(\widetilde u,\phi) \, d\xi + \kappa \mathcal Y^2 = -\int_{\mathbb R}a_\eta\phi F \, d\xi
\end{split}
\end{equation}
for almost every \(t\in(0,T)\), where $\mathcal{B}_f^\eta$ is defined by \eqref{eq:general-flux-nonlinear-form}.
\end{lemma}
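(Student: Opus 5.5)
The plan is to differentiate $\mathcal E$ directly, substitute the perturbation equation \eqref{eq:perturbation-equation} for $\phi_t$, and integrate each resulting term by parts so that it either contributes to the left-hand side of \eqref{eq:weighted-energy-identity} or cancels. The regularity $\phi\in H^1(0,T;L^2)\cap L^2(0,T;H^2)$ recorded above makes $\mathcal E$ absolutely continuous. For a.e.\ $t$, the products $\phi\phi_\xi$ and $\phi(f(\widetilde u+\phi)-f(\widetilde u))$ lie in $W^{1,1}(\mathbb R)$ and hence vanish at $\pm\infty$. Since $a_\eta$ and $\widetilde u$ and their derivatives are bounded, all boundary terms drop out.

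Since $\partial_t a_\eta=-\widetilde u_t$ and $\partial_\xi a_\eta=-\widetilde u_\xi$, we have
\[
\frac{d}{dt}\mathcal E=-\frac12\int\widetilde u_t\phi^2\,d\xi+\int a_\eta\phi\phi_t\,d\xi .
\]
Inserting \eqref{eq:perturbation-equation} produces five groups of terms. First, the transport term gives $\sigma\int a_\eta\phi\phi_\xi=\frac\sigma2\int\widetilde u_\xi\phi^2$. Second, the viscous term gives
\[
\int a_\eta\phi\phi_{\xi\xi}=-\int a_\eta\phi_\xi^2-\frac12\int\widetilde u_{\xi\xi}\phi^2 .
\]
Together with the $\widetilde u_t$ term, these account for $\int a_\eta\phi_\xi^2$ and $\frac12\int(\widetilde u_t-\sigma\widetilde u_\xi+\widetilde u_{\xi\xi})\phi^2$. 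Third, the shift term is
\[
-\dot X\int a_\eta\phi(\phi_\xi+\widetilde u_\xi)=-\dot X\int\widetilde u_\xi\Bigl(a_\eta\phi+\frac12\phi^2\Bigr)=-\dot X\mathcal Y=-\kappa\mathcal Y^2 ,
\]
using $\int a_\eta\phi\phi_\xi=\frac12\int\widetilde u_\xi\phi^2$ and \eqref{eq:shift-ode}. Fourth, the residual term is simply $-\int a_\eta\phi F$.

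The only step requiring care is the flux term, which must be reduced to $-\int\widetilde u_\xi\mathcal B_f^\eta(\widetilde u,\phi)$. Set $G:=f(\widetilde u+\phi)-f(\widetilde u)$ and $H(v,z):=\int_0^z\bigl(f(v+s)-f(v)\bigr)ds$. Note that $\partial_zH=f(v+z)-f(v)$ and $\partial_vH=f(v+z)-f(v)-f'(v)z$. Integrating by parts gives
\[
-\int a_\eta\phi G_\xi=\int(a_\eta\phi_\xi-\widetilde u_\xi\phi)G .
\]
By the chain rule, $\phi_\xi G=\partial_\xi[H(\widetilde u,\phi)]-\widetilde u_\xi\bigl(G-f'(\widetilde u)\phi\bigr)$. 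A further integration by parts then yields
\[
\int a_\eta\phi_\xi G=\int\widetilde u_\xi H-\int a_\eta\widetilde u_\xi\bigl(G-f'(\widetilde u)\phi\bigr).
\]
Collecting terms, the flux contribution equals
\[
-\int\widetilde u_\xi\Bigl[(a_\eta+\phi)G-H(\widetilde u,\phi)-a_\eta f'(\widetilde u)\phi\Bigr]=-\int\widetilde u_\xi\,\mathcal B_f^\eta(\widetilde u,\phi),
\]
since $a_\eta+\phi=\alpha_\eta-\widetilde u+\phi$. This is exactly the definition \eqref{eq:general-flux-nonlinear-form} with $g=f$.

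Summing all five contributions and moving the nonnegative terms to the left gives \eqref{eq:weighted-energy-identity}. I expect the main obstacle to be only this flux bookkeeping, namely recognizing the chain-rule identity for $H$. The justification of the integrations by parts is routine: $\widetilde u_\xi$ is bounded, and $G$ grows at most like $|\phi|(1+|\phi|^2)$, which is integrable against $\phi_\xi$ for $\phi\in H^1\subset L^\infty$.
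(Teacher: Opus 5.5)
Your proposal is correct and follows the paper's proof essentially step by step. Like the paper, you test the perturbation equation against $a_\eta\phi$ and reduce the convective term to $\int\widetilde u_\xi\mathcal B_f^\eta(\widetilde u,\phi)\,d\xi$. This uses the chain-rule identity for the primitive $\int_0^\phi(f(\widetilde u+s)-f(\widetilde u))\,ds$ followed by a second integration by parts with $(a_\eta)_\xi=-\widetilde u_\xi$. The terms you move to the left are not all nonnegative, but since the identity is exact this wording slip is harmless.
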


\begin{proof}
We take the \(L^2\)-inner product of \eqref{eq:perturbation-equation} with \(a_\eta\phi\):
\begin{equation} \label{L2inner}
\begin{split}
& \int_{\mathbb R}a_\eta\phi\phi_t \, d\xi -\sigma\int_{\mathbb R}a_\eta\phi\phi_\xi \, d\xi + \int_{\mathbb R}a_\eta\phi\bigl(f(\widetilde u+\phi)-f(\widetilde u)\bigr)_\xi \, d\xi\\
&\quad + \dot X \int_{\mathbb R}a_\eta\phi(\phi_\xi+\widetilde u_\xi) \, d\xi -\int_{\mathbb R}a_\eta\phi\phi_{\xi\xi}\,d\xi = -\int_{\mathbb R}a_\eta\phi F \, d\xi.
\end{split}
\end{equation}
For the third term on the left-hand side, we first integrate by parts:
\begin{equation*}
\begin{split}
I_{\mathrm{conv}} &:=\int_{\mathbb R}a_\eta\phi\bigl(f(\widetilde u+\phi)-f(\widetilde u)\bigr)_\xi\,d\xi \\
&=\int_{\mathbb R}\widetilde u_\xi\phi\bigl(f(\widetilde u+\phi)-f(\widetilde u)\bigr)\,d\xi-\int_{\mathbb R}a_\eta\phi_\xi\bigl(f(\widetilde u+\phi)-f(\widetilde u)\bigr)\,d\xi,
\end{split}
\end{equation*}
where we used \((a_\eta)_\xi=-\widetilde u_\xi\). To handle the second term, we observe that
\begin{equation*}
\begin{split}
\left(\int_0^\phi\bigl(f(\widetilde u+s)-f(\widetilde u)\bigr)\,ds\right)_\xi &= \phi_\xi \bigl(f(\widetilde u+\phi)-f(\widetilde u)\bigr) + \widetilde u_\xi\int_0^\phi \bigl( f'(\widetilde u+s)-f'(\widetilde u)\bigr) \, ds \\
& = \phi_\xi \big( f(\widetilde u+\phi)-f(\widetilde u)\big) + \widetilde u_\xi\bigl(f(\widetilde u+\phi) - f(\widetilde u)-f'(\widetilde u)\phi \bigr).
\end{split}
\end{equation*}
Using this identity, we have
\begin{equation*}
\begin{split}
I_{\mathrm{conv}} &=\int_{\mathbb R}\widetilde u_\xi\phi\bigl(f(\widetilde u+\phi)-f(\widetilde u)\bigr)\,d\xi -\int_{\mathbb R}a_\eta\left(\int_0^\phi\bigl(f(\widetilde u+s)-f(\widetilde u)\bigr)\,ds\right)_\xi \, d\xi \\
&\quad+\int_{\mathbb R}a_\eta\widetilde u_\xi\bigl(f(\widetilde u+\phi)-f(\widetilde u)-f'(\widetilde u)\phi\bigr)\, d\xi.
\end{split}
\end{equation*}
Integrating the second term by parts and using \((a_\eta)_\xi=-\widetilde u_\xi\), we obtain
\begin{equation} \label{Iconv}
\begin{split}
I_{\mathrm{conv}}
&=\int_{\mathbb R}\widetilde u_\xi \bigg[ \phi\bigl(f(\widetilde u+\phi)-f(\widetilde u)\bigr)-\int_0^\phi\bigl(f(\widetilde u+s)-f(\widetilde u)\bigr)\,ds \\
&\qquad \qquad +a_\eta\bigl(f(\widetilde u+\phi)-f(\widetilde u)-f'(\widetilde u)\phi\bigr)\bigg] \, d\xi\\
&=\int_{\mathbb R}\widetilde u_\xi\mathcal B_f^\eta(\widetilde u,\phi)\,d\xi.
\end{split}
\end{equation}
The remaining terms in \eqref{L2inner} are computed using the product rule, integration by parts, \(a_\eta=\alpha_\eta-\widetilde u\), and \eqref{eq:shift-ode}:
\begin{equation} \label{fise}
\int_{\mathbb R}a_\eta\phi\phi_t\,d\xi =\frac{d}{dt}\mathcal E+\frac12\int_{\mathbb R}\widetilde u_t\phi^2\,d\xi, \qquad -\sigma\int_{\mathbb R}a_\eta\phi\phi_\xi\,d\xi =-\frac{\sigma}{2}\int_{\mathbb R}\widetilde u_\xi\phi^2\,d\xi,
\end{equation}
\begin{equation} \label{dXY}
\dot X\int_{\mathbb R}a_\eta\phi(\phi_\xi+\widetilde u_\xi)\,d\xi =\dot X\mathcal Y=\kappa\mathcal Y^2,
\end{equation}
and
\begin{equation} \label{las}
-\int_{\mathbb R}a_\eta\phi\phi_{\xi\xi}\,d\xi =\int_{\mathbb R}a_\eta\phi_\xi^2\,d\xi+\frac12\int_{\mathbb R}\widetilde u_{\xi\xi}\phi^2\,d\xi.
\end{equation}
Substituting \eqref{Iconv}--\eqref{las} into \eqref{L2inner}, we obtain the desired identity \eqref{eq:weighted-energy-identity}.
\end{proof}

\subsection{Decomposition of the energy identity}

We next decompose \eqref{eq:weighted-energy-identity} into shock, rarefaction, interaction, and residual contributions.

To combine the third and fourth terms on the left-hand side of \eqref{eq:weighted-energy-identity}, for a flux \(g\) we define
\begin{equation}\label{eq:general-flux-A-form}
\mathcal A_g^\eta(v,z):=\mathcal B_g^\eta(v,z)+\frac12g'(v)z^2, \qquad v,z \in \mathbb{R}.
\end{equation}
By \eqref{eq:shock-ode}, \eqref{eq:rarefaction-frame-equation}, and \eqref{eq:composite-profile}, we have
\begin{equation*}
\begin{split}
\widetilde u_t-\sigma\widetilde u_\xi+\widetilde u_{\xi\xi}
&=\bigl(f'(u^S)-2\sigma\bigr)u^S_\xi-f'(u^R)u^R_\xi+u^R_{\xi\xi}\\
&=\bigl(f'(\widetilde u)-2\sigma\bigr)u^S_\xi+\bigl(f'(\widetilde u)-2f'(u^R)\bigr)u^R_\xi-F,
\end{split}
\end{equation*}
where the second equality follows from the definition of \(F\) in \eqref{eq:residual}. Using this identity and \eqref{eq:general-flux-A-form}, we have
\begin{equation*}
\begin{split}
&\frac12\int_{\mathbb R}\left(\widetilde u_t-\sigma\widetilde u_\xi+\widetilde u_{\xi\xi}\right)\phi^2\,d\xi+\int_{\mathbb R} \widetilde u_\xi\mathcal B_f^\eta(\widetilde u,\phi)\,d\xi \\
& \quad = \frac{1}{2} \int_\mathbb{R} \left( \bigl(f'(\widetilde u)-2\sigma\bigr)u^S_\xi+\bigl(f'(\widetilde u)-2f'(u^R)\bigr)u^R_\xi-F \right) \phi^2 \, d\xi + \int_\mathbb{R} ( u^S_\xi + u^R_\xi ) \mathcal B_f^\eta(\widetilde u,\phi)\,d\xi \\
& \quad = \int_{\mathbb R} u^S_\xi \left( \mathcal A_f^\eta(\widetilde u,\phi)-\sigma\phi^2\right) \, d\xi+\int_{\mathbb R} u^R_\xi \left( \mathcal A_f^\eta(\widetilde u,\phi)-f'(u^R)\phi^2\right) \, d\xi-\frac12\int_{\mathbb R}F\phi^2\,d\xi.
\end{split}
\end{equation*}
Next, using \eqref{eq:shift-ode}, \eqref{eq:weight}, and \eqref{eq:composite-profile}, we decompose \(\mathcal Y\) as
\begin{equation*}
\mathcal Y=\mathcal Y_S^\eta[u^S;\phi]+\mathcal Z,
\end{equation*}
where, for a sufficiently regular profile \(V\),
\begin{equation}\label{eq:shock-shift-functional}
\mathcal Y_S^\eta[V;\phi]:=\int_{\mathbb R}V_\xi\left((\alpha_\eta-V)\phi+\frac12\phi^2\right) \, d\xi,
\end{equation}
and
\begin{equation*}
\mathcal Z:=-\int_{\mathbb R}(u^R-u_m)u^S_\xi\phi\,d\xi+\int_{\mathbb R}u^R_\xi\left(a_\eta\phi+\frac12\phi^2\right) \, d\xi.
\end{equation*}
Similarly,
\begin{equation*}
\int_{\mathbb R}a_\eta\phi_\xi^2\,d\xi=\int_{\mathbb R}(\alpha_\eta-u^S)\phi_\xi^2\,d\xi-\int_{\mathbb R}(u^R-u_m)\phi_\xi^2\,d\xi.
\end{equation*}
For a flux \(g\) and a sufficiently regular profile \(V\), we define
\begin{equation}\label{eq:perturbed-shock-block}
\begin{split}
\mathcal D_{S,g}^\eta[V;\phi]:={}&\int_{\mathbb R}(\alpha_\eta-V)\phi_\xi^2\,d\xi
+\int_{\mathbb R}V_\xi\left[\mathcal B_g^\eta(V,\phi)+\frac12\bigl(g'(V)-2g'(u_m)\bigr)\phi^2\right] \, d\xi\\
&+\frac5{u_m}\left(\mathcal Y_S^\eta[V;\phi]\right)^2.
\end{split}
\end{equation}
For the rarefaction profile \(u^R\), we define
\begin{equation}\label{eq:perturbed-rarefaction-block}
\mathcal D_{R,g}^\eta[u^R;\phi]:=\int_{\mathbb R}u^R_\xi\left[\mathcal B_g^\eta(u^R,\phi)-\frac12g'(u^R)\phi^2\right] \, d\xi.
\end{equation}
In the subsequent analysis, we refer to \(\mathcal D_{S,g}^\eta\) and \(\mathcal D_{R,g}^\eta\) as the shock and rarefaction blocks, respectively. 

Substituting the above decompositions into \eqref{eq:weighted-energy-identity}, we obtain
\begin{equation}\label{eq:master-decomposition}
\frac{d}{dt}\mathcal E(t)+\mathcal G_f(t)+\mathcal I_f(t)=\mathcal R(t),
\end{equation}
where
\begin{equation}\label{eq:general-good-interaction-remainder}
\begin{split}
\mathcal G_f(t)&:=\mathcal D_{S,f}^\eta[u^S;\phi]+\mathcal D_{R,f}^\eta[u^R;\phi]-\int_{\mathbb R}(u^R-u_m)\phi_\xi^2\,d\xi+\kappa(\mathcal Y_S^\eta[u^S;\phi]+\mathcal Z)^2 \\
&\quad -\frac5{u_m}\left(\mathcal Y_S^\eta[u^S;\phi]\right)^2 +\int_{\mathbb R}u^S_\xi\bigl(\mathcal A_f^\eta(\widetilde u,\phi)-\mathcal A_f^\eta(u^S,\phi)\bigr)\,d\xi,\\
\mathcal I_f(t)&:=\int_{\mathbb R}u^R_\xi\bigl(\mathcal A_f^\eta(\widetilde u,\phi)-\mathcal A_f^\eta(u^R,\phi)\bigr)\,d\xi,\\
\mathcal R(t)&:=-\int_{\mathbb R}F\left(a_\eta\phi-\frac12\phi^2\right) \, d\xi.
\end{split}
\end{equation}
Sections~\ref{sec:4}--\ref{sec:5} provide estimates for $\mathcal{G}_f$, $\mathcal{I}_f$, and $\mathcal{R}$ and complete the proof of Proposition~\ref{prop:large-L2-apriori}.

\section{The main coercivity estimate} \label{sec:4}

This section proves the coercivity estimate for the functional $\mathcal{G}_f$. We first establish the coercivity of the reference shock block associated with the cubic flux $f_0$ and the corresponding shock profile $U^S$, at $\eta=0$, using the degenerate Hardy--Poincar\'e inequality. The resulting estimate is then extended to a small positive $\eta$, the profile $u^S$, and the perturbed flux $f$. The rarefaction block is treated by a pointwise estimate for the cubic flux $f_0$, followed by a perturbation estimate for the flux $f$. Finally, the remaining terms in $\mathcal{G}_f$ are controlled by the smallness of $\delta_R/u_m$ and $\varepsilon_f$, and all the estimates are combined to obtain the desired coercivity.

The following proposition is the main result of this section.

\begin{proposition}\label{prop:main-coercivity}
Under the assumptions in Proposition~\ref{prop:large-L2-apriori}, there exists a constant \(c>0\), independent of $f$, such that
\begin{equation}\label{eq:perturbed-good-term}
\mathcal G_f(t)\geq c\left(u_m\int_{\mathbb R}\phi_\xi^2\,d\xi+\int_{\mathbb R}\widetilde u_\xi\left(u_m^2\phi^2+\phi^4\right) \, d\xi + |\dot X(t)|^2\right)
\end{equation}
for all \(t\in[0,T]\).
\end{proposition}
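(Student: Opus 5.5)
The plan is to bound each piece of $\mathcal G_f$ from below separately. The shock block $\mathcal D^\eta_{S,f}[u^S;\phi]$ and the rarefaction block $\mathcal D^\eta_{R,f}[u^R;\phi]$ supply the coercive contributions. The mixed terms are then absorbed using the smallness of $\delta_R/u_m$ and $\varepsilon_f$.

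\textbf{Step 1: the reference shock block.} Work first with $g=f_0$, $V=U^S$, $\eta=0$. For the cubic flux, $\mathcal B^0_{f_0}(v,z)+\frac12(3v^2-6u_m^2)z^2$ is a polynomial in $z$ of degree four. I would complete squares so that it equals a quadratic form in $z$ plus nonnegative multiples of $(z^2+c_1(v)z)^2$-type terms and $z^4$. The squared shift term $\frac5{u_m}(\mathcal Y^0_S)^2$ is handled in the same way: expand it, then use Cauchy--Schwarz to split it into a square of the linear moment $\int U^S_\xi(\alpha_0-U^S)\phi$ and a quartic part absorbed by the $z^4$ terms.

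Next change variables to $y=(U^S-U_-)/(u_m-U_-)$. By \eqref{eq:cubic-shock-first-order}, $d\xi=dy/\big((u_m-U_-)^2y(1-y)^2\big)$. The diffusion term $\int(5u_m-U^S)\phi_\xi^2$ then becomes $\int(7-3y)y(1-y)^2\Phi_y^2$, up to a factor of $u_m$ powers, since $5u_m-U^S=u_m(7-3y)$ when $U_-=-2u_m$. The linear moment becomes $\int(3-2y)\Phi$. The remaining zeroth-order quadratic term becomes $-\int(1-y)(12-y)\Phi^2$.

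Proposition~\ref{Poincare} with $c_P>1$ then yields a strict gap. Spending a fraction of this gap, one obtains
\[
\mathcal D^0_{S,f_0}[U^S;\phi]\ge c\Big(u_m\int\phi_\xi^2+\int U^S_\xi(u_m^2\phi^2+\phi^4)\Big)+c\,u_m^{-1}(\mathcal Y^0_S)^2 .
\]
The weighted $L^2$ control $\int U^S_\xi\phi^2\sim\int(1-y)\ldots$ requires checking that $(1-y)(12-y)$ dominates the Jacobian weight. This is the step I expect to be the main obstacle: the completion of squares must land exactly on the weights of Proposition~\ref{Poincare} with the coefficient $10/3$, and that requires careful algebra.

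\textbf{Step 2: perturbation in $\eta$, $u^S$, and $f$.} The differences $\mathcal D^\eta_{S,f}[u^S;\phi]-\mathcal D^0_{S,f_0}[U^S;\phi]$ come from three sources.
\begin{itemize}
\item The $\eta u_m$ change in the weight.
\item The change $q=f-f_0$. Here $|q''|\le\varepsilon_f(u_m+|u|)$ and $|q'''|\le\varepsilon_f$, so the perturbed terms are bounded by $\varepsilon_f(u_m^2\phi^2+|\phi|^3+\phi^4)$, and $|\phi|^3$ is interpolated between the other two.
\item The change of profile. Here I would use the $y$-coordinate built from $u^S$ itself, together with \eqref{eq:perturbed-shock-ode} and $|b-1|\le C\varepsilon_f$ from Lemma~\ref{lem:perturbed-degenerate-profile}, so that all weights differ by factors $1+O(\varepsilon_f)$.
\end{itemize}
Each of these errors is $O(\eta+\varepsilon_f)$ times the dissipation, so it is absorbed for small $\eta$ and $\varepsilon_0$.

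\textbf{Step 3: the rarefaction block.} For $f_0$, $\mathcal B^\eta_{f_0}(v,z)-\frac32v^2z^2$ is an explicit quartic in $z$. For $v\in[u_m,u_+]$ I would show it is pointwise $\ge c\eta(u_m^2z^2+z^4)$; this is where $\eta>0$ is needed, in line with Lemma~\ref{lem:cubic-rarefaction-coercivity}. Flux perturbations are again $O(\varepsilon_f)$.

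\textbf{Step 4: remaining terms.} The term $\int(u^R-u_m)\phi_\xi^2\le\delta_R\int\phi_\xi^2$ is absorbed. The term $\int u^S_\xi(\mathcal A(\widetilde u,\phi)-\mathcal A(u^S,\phi))$ has size $|u^R-u_m|\,u^S_\xi(u_m\phi^2+|\phi|^3)$ and is small since $|u^R-u_m|\le\delta_R$. For the shift, write
\[
\kappa(\mathcal Y_S+\mathcal Z)^2-\tfrac5{u_m}\mathcal Y_S^2\ge \tfrac{\kappa'}{2}(\mathcal Y_S+\mathcal Z)^2-C\mathcal Z^2,
\]
using $\kappa>5/u_m$. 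Here $\mathcal Z^2\le C\delta_R\int(u^S_\xi+u^R_\xi)(u_m^2\phi^2+\phi^4)$ by Cauchy--Schwarz, with $\int u^R_\xi\le\delta_R$. Since $\dot X=\kappa(\mathcal Y_S+\mathcal Z)$, this yields the $|\dot X|^2$ term. Finally, $\widetilde u_\xi=u^S_\xi+u^R_\xi$ assembles the right-hand side of \eqref{eq:perturbed-good-term}.
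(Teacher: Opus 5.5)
Your Steps 2--4 match the paper's perturbation and remaining-term arguments. Step 1, however, has a genuine gap, and it is exactly the step you flagged.

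First, the linear moment is not $\int_0^1(3-2y)\Phi\,dy$. Since $U^S_\xi\,d\xi=3u_m\,dy$ and $5u_m-U^S=u_m(7-3y)$, one has $\frac5{u_m}(\mathcal Y^0_S[U^S;\phi])^2=45u_m^5(L+N)^2$, with $L=\int_0^1(7-3y)\Phi\,dy$ and $N=\frac12\int_0^1\Phi^2\,dy$.

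Second, the Cauchy--Schwarz split $(L+N)^2\geq(1-\epsilon)L^2-(\epsilon^{-1}-1)N^2$ cannot be closed. The pointwise completion that produces exactly the weight $-(1-y)(12-y)$ is $(15y-11)\Phi^2+(1+y)\Phi^3+\frac14\Phi^4=-(1-y)(12-y)\Phi^2+\bigl(\frac12\Phi^2+(1+y)\Phi\bigr)^2$. It uses up the entire quartic term, so nothing is left to absorb the loss.

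Suppose instead you free a fixed fraction $\frac\theta4\Phi^4$. This adds $-\frac{\theta}{1-\theta}(1+y)^2\Phi^2$, a weight that does not degenerate at $y=1$. Near $y=1$ we have $A(y)\sim4(1-y)^2$, and the constant $4$ in $\int\Phi^2\,dy\leq4\int(1-y)^2\Phi_y^2\,dy$ is sharp (test $\Phi=(1-y)^{-\beta}$ with $\beta\uparrow\frac12$, suitably truncated). Hence nonnegativity of the reduced form forces $\theta\leq\frac15$. On the other hand, absorbing $5(\epsilon^{-1}-1)N^2\leq\frac54(\epsilon^{-1}-1)\int_0^1\Phi^4\,dy$ forces $5(\epsilon^{-1}-1)\leq\theta$, hence $1-\epsilon\leq\frac1{26}$. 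At $\Phi\equiv1$ the reduced quadratic form is then at most $\frac{5}{26}\cdot\frac{121}{4}-\frac{35}{6}=\frac{605}{104}-\frac{35}{6}<0$. So this bookkeeping loses coercivity outright. In any case it never produces the form to which Proposition~\ref{Poincare} applies.

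The missing idea is the exact identity of Lemma~\ref{lem:reference-shock-nonlinear-decomposition}. Take $g=\frac12\Phi^2+(1+y)\Phi$, which is the very quantity completed pointwise, together with $G=\int_0^1g\,dy$ and $M=\int_0^1(3-2y)\Phi\,dy$. Then $L+N=G+2M$. Using $\int_0^1g^2\,dy=\int_0^1(g-G)^2\,dy+G^2$ and $G^2+5(G+2M)^2=6\bigl(G+\frac53M\bigr)^2+\frac{10}3M^2$, one gets $\mathcal F=\mathcal H+\int_0^1(g-G)^2\,dy+6\bigl(G+\frac53M\bigr)^2$ with no loss. The cubic nonlocal part of the shift term is not estimated away; it is merged with the local square through the mean $G$. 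This is what converts the $(7-3y)$-moment with coefficient $5$ into the $(3-2y)$-moment with coefficient $\frac{10}3$.

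Two smaller repairs are also needed.
\begin{enumerate}[(i)]
\item Your proposed check that $(1-y)(12-y)$ dominates the Jacobian weight fails. Indeed $\int_{\mathbb R}U^S_\xi u_m^2\phi^2\,d\xi=3u_m^5\int_0^1\Phi^2\,dy$ has uniform weight, while $(1-y)(12-y)$ vanishes at $y=1$. The unweighted bound must instead be recovered from the $(1-y)^2\Phi_y^2$ part of $A$ near $y=1$, as in Lemma~\ref{lem:shock-endpoint-recovery}.
\item Since no quartic term is left over, the $\phi^4$ control has to come from $\Phi^4\leq8g^2+32\Phi^2$ together with $\int_0^1g^2\,dy\leq C(\mathcal F+M^2)\leq C\mathcal F$.
\end{enumerate}
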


The remainder of this section is devoted to the proof of Proposition~\ref{prop:main-coercivity}.

\subsection{Coercivity of the reference shock block}

In this and the next subsection, the analysis is carried out for arbitrary \(\phi\in H^1(\mathbb R)\). We define
\begin{equation*}
\begin{split}
\mathcal D_{S,f_0}^{0}[U^S;\phi]:={}&\int_{\mathbb R}(5u_m-U^S)\phi_\xi^2\,d\xi\\
&+\int_{\mathbb R}U^S_\xi\left((15u_mU^S-3u_m^2)\phi^2+(5u_m+U^S)\phi^3+\frac34\phi^4\right) \, d\xi\\
&+\frac5{u_m}\left(\mathcal Y_S^0[U^S;\phi]\right)^2,
\end{split}
\end{equation*}
where
\begin{equation} \label{def_YS0}
\mathcal Y_S^0[U^S;\phi]:=\int_{\mathbb R}U^S_\xi\left((5u_m-U^S)\phi+\frac12\phi^2\right) \, d\xi.
\end{equation}
The proposition below provides the coercivity of the reference shock block $\mathcal{D}_{S,f_0}^0[U^S;\phi]$.

\begin{proposition}\label{prop:reference-shock-coercivity}
Fix \(u_m>0\). There exists a constant \(c>0\) such that
\begin{equation}\label{eq:reference-shock-coercivity-statement}
\mathcal D_{S,f_0}^{0}[U^S;\phi]\geq c\left(u_m\int_{\mathbb R}\phi_\xi^2\,d\xi+\int_{\mathbb R}U^S_\xi\left(u_m^2\phi^2+\phi^4\right)d\xi\right)
\end{equation}
for any \(\phi\in H^1(\mathbb R)\).
\end{proposition}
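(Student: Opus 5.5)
The plan is to reduce \eqref{eq:reference-shock-coercivity-statement} to Proposition~\ref{Poincare} through an exact algebraic decomposition of the cubic terms, followed by the change of variables $y=(U^S-U_-)/(u_m-U_-)$. By the scaling $\phi\mapsto u_m\phi$, $\xi\mapsto\xi/u_m^2$, both sides of \eqref{eq:reference-shock-coercivity-statement} scale identically, so I would first reduce to $u_m=1$. Then $U_-=-2$, $U^S=3y-2$, and by \eqref{eq:cubic-shock-first-order} we have $U^S_\xi=27y(1-y)^2$ and $dy/d\xi=9y(1-y)^2$.

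\emph{Step 1 (completion of squares).} For $U=U^S$, I would write
\[
(15U-3)\phi^2+(5+U)\phi^3+\tfrac34\phi^4=\tfrac34\Bigl(\phi^2+\tfrac23(5+U)\phi\Bigr)^2-\tfrac13(1-U)(34-U)\phi^2 .
\]
The remaining quadratic coefficient is negative on $(U_-,1)$ and equals $-3(1-y)(12-y)$ in the $y$ variable. This is exactly the weight on the right-hand side of Proposition~\ref{Poincare}. The dissipation term becomes $\int(5-U)\phi_\xi^2\,d\xi=9\int_0^1(7-3y)y(1-y)^2\Phi_y^2\,dy$, and $U_\xi\,d\xi=3\,dy$. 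After these substitutions, the linear part of $\mathcal D^0_{S,f_0}$ is $9$ times the left-hand side of Proposition~\ref{Poincare} minus $9\int(1-y)(12-y)\Phi^2\,dy$, provided the shift term supplies $\tfrac{10}{3}\cdot 9\,(\int(3-2y)\Phi)^2$. Since $5(\mathcal Y^0_S)^2$ has the form $45\bigl(\int_0^1[(7-3y)\Phi+\tfrac12\Phi^2]\,dy\bigr)^2$, the main task is to reconcile the linear moment $\int(7-3y)\Phi$ with $\int(3-2y)\Phi$.

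\emph{Step 2 (matching the moment; main obstacle).} I expect this reconciliation to be the hardest step. The first thing I would try is to complete the square in Step 1 with the shifted quantity $\phi^2+\tfrac23(5+U)\phi+\lambda$, possibly with $\lambda$ depending on $U$, or to move part of the square $(\mathcal Y_S^0)^2$ into the pointwise square. The free parameter should be chosen so that the leftover linear functional multiplying the shift becomes proportional to $\int(3-2y)\Phi$. Cross terms of the form $(\int\Phi)(\int\Phi^2)$ are the danger here.

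If an exact match is not available, I would instead use
\[
(A+B)^2\ge(1-\theta)A^2-(\theta^{-1}-1)B^2 ,
\]
with $A$ the linear moment and $B=\tfrac12\int\Phi^2\,dy$. The error $B^2$ is controlled by Cauchy--Schwarz: $(\int U_\xi\phi^2)^2\le\int U_\xi\,d\xi\int U_\xi\phi^4=3\int U_\xi\phi^4$. This error would then be absorbed by a fraction of the nonnegative square $\tfrac34(\phi^2+\tfrac23(5+U)\phi)^2$, together with a pointwise bound of $\phi^4$ by that square plus $\phi^2$ terms. In this route the gap $c_P>1$ is what pays for the losses.

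\emph{Step 3 (conclusion).} With the moment matched, Proposition~\ref{Poincare} applies, since $\Phi=\phi(\xi(y))$ is in $L^1(0,1)$ with finite weighted norms for $\phi\in H^1$. It gives that the quadratic part is at least
\[
9(c_P-1)\int_0^1(1-y)(12-y)\Phi^2\,dy .
\]
Interpolating with weight $\epsilon$, i.e.\ keeping $\epsilon\cdot\mathrm{LHS}$ and applying the inequality to the rest, also retains a small multiple of the degenerate dissipation $\int(7-3y)y(1-y)^2\Phi_y^2$.

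The remaining steps are routine bounds that upgrade this to the full right-hand side of \eqref{eq:reference-shock-coercivity-statement}:
\begin{enumerate}[(i)]
\item The term $\int U_\xi\phi^2=3\int\Phi^2\,dy$ is not controlled by the $(1-y)$-weighted term near $y=1$. I would recover it with a one-dimensional Hardy bound near $y=1$, using the retained dissipation together with the $(1-y)$-weighted $L^2$ control.
\item The term $\int U_\xi\phi^4$ is controlled by the nonnegative square plus $C\int U_\xi\phi^2$.
\item The full term $\int\phi_\xi^2$ follows since $5-U\ge4$. Only a fraction of it should be spent in the previous steps.
\end{enumerate}
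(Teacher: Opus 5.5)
\textbf{There is a genuine gap: Step~2 is left open, and the whole argument depends on it.} Your normalization $u_m=1$, the pointwise completion of squares, the change of variables, and the recovery of $\int_0^1\Phi^2\,dy$ and $\int_0^1\Phi^4\,dy$ in Step~3 are sound and agree with the paper.

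The missing observation is the splitting $7-3y=(1+y)+2(3-2y)$. Put $g:=\tfrac12\Phi^2+(1+y)\Phi$, so that your pointwise square is $9\int_0^1g^2\,dy$. Put also $G:=\int_0^1g\,dy$ and $M:=\int_0^1(3-2y)\Phi\,dy$. Then $\int_0^1\bigl[(7-3y)\Phi+\tfrac12\Phi^2\bigr]dy=G+2M$. In words, the nonlinear shift functional is the mean of the very function you squared pointwise, plus twice the moment of Proposition~\ref{Poincare}. Split off that mean, $\int_0^1g^2\,dy=\int_0^1(g-G)^2\,dy+G^2$, and complete the square in $(G,M)$:
\[
G^2+5(G+2M)^2=6\Bigl(G+\tfrac53M\Bigr)^2+\tfrac{10}{3}M^2 .
\]
With your normalization $u_m=1$, this gives the exact identity $\mathcal D^0_{S,f_0}[U^S;\phi]=9\bigl[\mathcal H[\Phi]+\int_0^1(g-G)^2\,dy+6\bigl(G+\tfrac53M\bigr)^2\bigr]$. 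Here $\mathcal H$ is precisely the quadratic form of Proposition~\ref{Poincare}. In your terms, this is the completion with $\lambda=-2G$, a nonlocal constant rather than a function of $U$. The cross terms $(\int\Phi)(\int\Phi^2)$ you worry about are absorbed inside exact squares, with no loss and no smallness. For the $L^4$ bound you then recover $G^2$ from $G^2\le2(G+\tfrac53M)^2+\tfrac{50}{9}M^2$ and $M^2\le\tfrac{13}{3}\int_0^1\Phi^2\,dy$.

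Your fallback does not fill the gap. After $(A+B)^2\ge(1-\theta)A^2-(\theta^{-1}-1)B^2$, the rank-one term is $5(1-\theta)\bigl(\int_0^1(7-3y)\Phi\,dy\bigr)^2$. This has a different direction and coefficient from $\tfrac{10}{3}M^2$. Proposition~\ref{Poincare}, whose $c_P$ is not quantified, says nothing about it. So the claim that ``the gap $c_P>1$ pays for the losses'' is unfounded, and you would need a new Hardy--Poincar\'e inequality.

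The constants also work against you as described. Write $\tau:=\theta^{-1}-1$.
\begin{itemize}
\item Absorbing $\tfrac54\tau(\int_0^1\Phi^2)^2\le\tfrac54\tau\int_0^1\Phi^4\le10\tau\int_0^1g^2+40\tau\int_0^1\Phi^2$ into $\int_0^1 g^2$ requires $10\tau\le1$.
\item The constant mode $\Phi\equiv d$ requires $\tfrac{5\tau}{1+\tau}\cdot\tfrac{121}{4}>\tfrac{35}{6}$, i.e.\ $\tau>\tfrac{14}{349}$.
\item The leftover is then $40\tau\int_0^1\Phi^2\,dy>1.6\int_0^1\Phi^2\,dy$. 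The quadratic form cannot pay for it. Since $A\sim4(1-y)^2$ near $y=1$, log-truncated profiles $\Phi\approx(1-y)^{-1/2}$ make the form equal to $(1+o(1))\int_0^1\Phi^2\,dy$.
\end{itemize}
The exact identity above is what removes the need for any such balancing.
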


To establish this proposition, we proceed in two steps. We first rewrite the functional $\mathcal D_{S,f_0}^{0}[U^S;\phi]$ in shock-profile coordinates and isolate the quadratic form to which the degenerate Hardy--Poincar\'e inequality applies; we then recover control of the unweighted \(L^2\)- and \(L^4\)-norms needed for the full coercivity estimate.

\subsubsection{Profile coordinates and nonlinear decomposition}

We introduce the shock-profile coordinate
\begin{equation} \label{eq:profile-coordinate}
y:=\frac{U^S-U_-}{u_m-U_-}=\frac{U^S+2u_m}{3u_m}\in(0,1),\qquad \Phi(y):=\frac{\phi(\xi(y))}{u_m}.
\end{equation}
Then, using \eqref{eq:cubic-shock-first-order} and $U_-=-2u_m$,
\begin{equation} \label{eq:profile-jacobian}
\begin{split}
y_\xi &= \frac{U^S_\xi}{3u_m} = \frac{1}{3u_m} (U^S-U_-)(U^S-u_m)^2 = 9u_m^2 y (1-y)^2, \qquad U^S_\xi d \xi = 3u_m dy.
\end{split}
\end{equation}
Under this change of coordinates, we have
\begin{equation*}
\mathcal D_{S,f_0}^{0}[U^S;\phi] = 9 u_m^5 \mathcal{F}[\Phi],
\end{equation*}
where
\begin{equation*}
\begin{aligned}
\mathcal F[\Phi] & :=\int_0^1(7-3y)y(1-y)^2\Phi_y^2 \, dy+\int_0^1\left((15y-11)\Phi^2+(1+y)\Phi^3+\frac14\Phi^4\right) \, dy \\
&\quad +5\left(\int_0^1\left((7-3y)\Phi+\frac12\Phi^2\right) \, dy \right)^2.
\end{aligned}
\end{equation*}
Here we used the identities
\begin{equation*}
5u_m - U^S = u_m (7-3y), \quad 15u_m U^S - \sigma_0 = 3u_m^2 (15 y - 11),  \quad 5u_m +U^S = 3u_m (1+y).
\end{equation*}

The following exact decomposition separates the quadratic part of \(\mathcal F\) from two nonnegative nonlinear remainder terms.

\begin{lemma}\label{lem:reference-shock-nonlinear-decomposition}
For \(\phi\in H^1(\mathbb R)\), let \(\Phi\) be defined by \eqref{eq:profile-coordinate}. Then
\begin{equation}\label{eq:cubic-full-nonlinear-decomposition}
\mathcal F[\Phi]=\mathcal H[\Phi]+\int_0^1(g-G)^2 \, dy + 6 \left( G + \frac53 M \right)^2,
\end{equation}
where \(\mathcal H\), \(g\), \(G\), and \(M\) are defined by
\begin{equation*}
\mathcal H [\Phi]:=\int_0^1(7-3y)y(1-y)^2\Phi_y^2 \, dy-\int_0^1(1-y)(12-y)\Phi^2 \, dy +\frac{10}{3} \left(\int_0^1(3-2y)\Phi\, dy \right)^2,
\end{equation*}
and
\begin{equation} \label{gGM}
g(y):=\frac12\Phi(y)^2+(1+y)\Phi(y),\qquad G:=\int_0^1g(y)\, d y,\qquad M:=\int_0^1(3-2y)\Phi(y)\, dy.
\end{equation}
\end{lemma}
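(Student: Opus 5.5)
The plan is to verify \eqref{eq:cubic-full-nonlinear-decomposition} as an exact algebraic identity, after checking that every term is finite. For \(\phi\in H^1(\mathbb R)\) we have \(\phi\in L^\infty\). By \eqref{eq:profile-jacobian}, \(dy=U^S_\xi\,d\xi/(3u_m)\) is a finite measure on \((0,1)\). Hence \(\Phi\) is bounded, and \(\Phi,\Phi^2,\Phi^3,\Phi^4\) are all integrable on \((0,1)\). Likewise \(\int_0^1(7-3y)y(1-y)^2\Phi_y^2\,dy\) is, up to the constant factor, \(\int(5u_m-U^S)\phi_\xi^2\,d\xi<\infty\). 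So \(g\), \(G\) and \(M\) are well defined, and every integral below is finite. This justifies expanding and regrouping the terms.

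The gradient terms on the two sides of \eqref{eq:cubic-full-nonlinear-decomposition} coincide, so only the zeroth-order terms need to be compared. The key observation concerns the moment term in \(\mathcal F\). Its integrand splits as
\begin{equation*}
(7-3y)\Phi+\tfrac12\Phi^2=g+(6-4y)\Phi=g+2(3-2y)\Phi .
\end{equation*}
Therefore
\begin{equation*}
\int_0^1\Bigl((7-3y)\Phi+\tfrac12\Phi^2\Bigr)\,dy=G+2M,
\end{equation*}
and the shift term becomes \(5(G+2M)^2=5G^2+20GM+20M^2\).

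On the right-hand side we expand the two remainder terms:
\begin{equation*}
\int_0^1(g-G)^2\,dy=\int_0^1g^2\,dy-G^2,\qquad 6\Bigl(G+\tfrac53M\Bigr)^2=6G^2+20GM+\tfrac{50}{3}M^2 .
\end{equation*}
Adding the rank-one term \(\tfrac{10}{3}M^2\) from \(\mathcal H\), the right-hand side contains
\begin{equation*}
\int_0^1g^2\,dy+5G^2+20GM+20M^2 .
\end{equation*}
All terms involving \(G\) and \(M\) thus match the left-hand side exactly.

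It remains to check the pointwise identity
\begin{equation*}
g^2-(1-y)(12-y)\Phi^2=(15y-11)\Phi^2+(1+y)\Phi^3+\tfrac14\Phi^4 .
\end{equation*}
Expanding \(g^2=\tfrac14\Phi^4+(1+y)\Phi^3+(1+y)^2\Phi^2\) reduces this to the polynomial identity
\begin{equation*}
(1+y)^2-(1-y)(12-y)=15y-11,
\end{equation*}
which holds since \((1-y)(12-y)=12-13y+y^2\). Integrating the pointwise identity over \((0,1)\) completes the proof.

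There is no real obstacle here. The only step requiring an idea is recognizing the splitting \((7-3y)\Phi+\tfrac12\Phi^2=g+2(3-2y)\Phi\). This splitting is what produces the particular coefficients \(6\), \(\tfrac53\) and \(\tfrac{10}{3}\) in the completion of squares.
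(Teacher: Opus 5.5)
Your proof is correct and follows the paper's argument. It uses the same pointwise completion $g^2-(1-y)(12-y)\Phi^2=(15y-11)\Phi^2+(1+y)\Phi^3+\frac14\Phi^4$ and the same splitting of the moment integrand yielding $G+2M$, differing only in that you check the $G,M$ algebra by expanding both sides instead of completing the square forward; your preliminary finiteness check via \eqref{eq:profile-jacobian} is a small justification the paper leaves implicit.
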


\begin{proof}
Completing the square in the integrand of the second term of $\mathcal{F}[\Phi]$, we obtain
\begin{equation*}
\begin{split}
&\int_0^1\left((15y-11)\Phi^2+(1+y)\Phi^3+\frac14\Phi^4\right)\,dy\\
&\quad= -\int_0^1(1-y)(12-y)\Phi^2\,dy+\int_0^1\left(\frac12\Phi^2+(1+y)\Phi\right)^2\,dy.
\end{split}
\end{equation*}
For the third term of $\mathcal{F}[\Phi]$, we have
\begin{equation*}
\int_0^1\left((7-3y)\Phi+\frac12\Phi^2\right)\,dy = G+2M.
\end{equation*}
Hence,
\begin{equation*}
\begin{aligned}
\mathcal F[\Phi]&=\int_0^1(7-3y)y(1-y)^2\Phi_y^2\,dy-\int_0^1(1-y)(12-y)\Phi^2\,dy\\
&\quad+\int_0^1g^2\,dy+5(G+2M)^2.
\end{aligned}
\end{equation*}
To rewrite the last two terms, we use $\int_0^1g^2 \, dy = \int_0^1 (g-G)^2\,dy + G^2$:
\begin{equation*}
\begin{split}
\int_0^1g^2\,dy+5(G+2M)^2&=\int_0^1(g-G)^2\,dy+G^2+5(G+2M)^2\\
&=\int_0^1(g-G)^2\,dy+6\left(G+\frac53M\right)^2+\frac{10}{3}M^2.
\end{split}
\end{equation*}
Combining the last two identities yields \eqref{eq:cubic-full-nonlinear-decomposition}.
\end{proof}

\begin{remark}
The calculation in the proof of Lemma~\ref{lem:reference-shock-nonlinear-decomposition} explains the choice of \(5\) in \(\alpha_\eta=(5-\eta)u_m\). For the cubic flux at \(\eta=0\), this choice yields the simple polynomial weights
\begin{equation*}
(7-3y)y(1-y)^2,\qquad (1-y)(12-y),\qquad 3-2y
\end{equation*}
in the quadratic form \(\mathcal H[\Phi]\). The value \(5\) is used for this algebraic simplification; changing it would alter the reduced weights and require a separate Hardy--Poincar\'e analysis.
\end{remark}

\subsubsection{Completion of the coercivity estimate}

By Proposition~\ref{Poincare} and the definition of \(\mathcal H[\Phi]\), we have
\begin{equation} \label{eq:new-poincare}
\mathcal H[\Phi] \geq c \left( \int_0^1 (7-3y)y(1-y)^2 \Phi_y^2 \, dy + \int_0^1 (1-y)(12-y) \Phi^2 \, dy \right)
\end{equation}
for some $c >0$. The weight in the second integral degenerates at \(y=1\) and therefore does not directly control the unweighted \(L^2\)-norm. Consequently, the coercivity of the reference shock block $\mathcal D_{S,f_0}^{0}[U^S;\phi]$ reduces to recovering the unweighted \(L^2\)- and \(L^4\)-norms of \(\Phi\) from the quadratic form \(\mathcal H\) and the nonlinear remainder terms in \(\mathcal F\).

\begin{lemma} \label{lem:shock-endpoint-recovery}
There exists a constant \(C>0\) such that
\begin{equation} \label{eq:F-controls-Psi-L2}
\int_0^1 \Phi^2 \, dy \leq C \mathcal H[\Phi]
\end{equation}
for any \(\phi\in H^1(\mathbb R)\), with \(\Phi\) defined by \eqref{eq:profile-coordinate}.
\end{lemma}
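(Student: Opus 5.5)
The plan is to combine the coercivity \eqref{eq:new-poincare}, which is a consequence of Proposition~\ref{Poincare}, with a one-dimensional Hardy inequality at the degenerate endpoint $y=1$. By \eqref{eq:new-poincare},
\begin{equation*}
\mathcal H[\Phi]\geq c\left(\int_0^1(7-3y)y(1-y)^2\Phi_y^2\,dy+\int_0^1(1-y)(12-y)\Phi^2\,dy\right).
\end{equation*}
The zeroth-order weight degenerates only at $y=1$. On $[0,3/4]$ we have $(1-y)(12-y)\geq 11/4$, so $\int_0^{3/4}\Phi^2\,dy\leq \frac{4}{11}\int_0^1(1-y)(12-y)\Phi^2\,dy\leq C\mathcal H[\Phi]$. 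It therefore remains to bound $\int_{3/4}^1\Phi^2\,dy$. On this interval the derivative weight satisfies $(7-3y)y(1-y)^2\geq 3(1-y)^2$. This is exactly the weight for which the classical Hardy inequality gives $L^2$ control.

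For the Hardy step, set $s=1-y$ and $\Psi(s)=\Phi(1-s)$. Since $\phi\in H^1(\mathbb R)$, $\phi$ is bounded and continuous, so $\Psi$ is bounded, locally absolutely continuous on $(0,1)$, and $s\Psi(s)^2\to0$ as $s\to0^+$. For $a\in[1/4,1/2]$ and $0<\epsilon<a$, integrate by parts on $[\epsilon,a]$ and let $\epsilon\to0$:
\begin{equation*}
\int_0^a\Psi^2\,ds=a\Psi(a)^2-2\int_0^a s\Psi\Psi_s\,ds\leq a\Psi(a)^2+2\left(\int_0^a\Psi^2\,ds\right)^{1/2}\left(\int_0^a s^2\Psi_s^2\,ds\right)^{1/2}.
\end{equation*}
Absorbing the cross term by Young's inequality gives
\begin{equation*}
\int_0^{1/4}\Psi^2\,ds\leq\int_0^a\Psi^2\,ds\leq 2a\Psi(a)^2+4\int_0^{1/2}s^2\Psi_s^2\,ds.
\end{equation*}
To avoid a pointwise value, average this in $a$ over $[1/4,1/2]$. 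The term $2a\Psi(a)^2$ becomes at most $8\int_{1/4}^{1/2}\Psi^2\,ds=8\int_{1/2}^{3/4}\Phi^2\,dy$, which is already bounded by $C\mathcal H[\Phi]$. The term $\int_0^{1/2}s^2\Psi_s^2\,ds=\int_{1/2}^1(1-y)^2\Phi_y^2\,dy$ is at most $\frac{1}{3}\int_0^1(7-3y)y(1-y)^2\Phi_y^2\,dy\leq C\mathcal H[\Phi]$. Summing the estimates on $[0,3/4]$ and $[3/4,1]$ yields \eqref{eq:F-controls-Psi-L2}.

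The only delicate point is justifying the vanishing boundary term at $s=0$ (that is, $y=1$, or $\xi\to+\infty$) and the finiteness of the quantities involved. Boundedness of $\phi$ from $H^1\subset L^\infty$ gives $s\Psi^2\to0$ directly. Also $\int_0^1\Phi^2\,dy=\frac{1}{3u_m^3}\int U^S_\xi\phi^2\,d\xi<\infty$, so the absorption step is legitimate. No further use of the specific weights beyond these elementary lower bounds is needed, since all the genuine coercivity is supplied by Proposition~\ref{Poincare}.
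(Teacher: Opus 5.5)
Your argument is correct and takes essentially the paper's route: a lower bound on $(1-y)(12-y)$ away from $y=1$, then, near $y=1$, the Hardy identity $\bigl((1-y)\Phi^2\bigr)_y=-\Phi^2+2(1-y)\Phi\Phi_y$ with Young's inequality, and removal of the interior point value by averaging. The differences are cosmetic: you average over the cut point $a$, whereas the paper bounds $\Phi(1/2)^2$ by the fundamental theorem of calculus on $(1/4,1/2)$, and the paper discards the endpoint term by its sign rather than showing it vanishes via $\phi\in L^\infty$. One harmless slip: on $[1/2,1]$ you only have $(7-3y)y\geq 11/4$, so the factor $\frac13$ in your last bound should be $\frac4{11}$.
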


\begin{proof}
For \(0<\varepsilon<1/2\), we write
\begin{equation} \label{L2con_1}
\int_0^{1-\varepsilon}\Phi^2\,dy=\int_0^{1/2}\Phi^2\,dy+\int_{1/2}^{1-\varepsilon}\Phi^2\,dy.
\end{equation}
We estimate the two terms on the right-hand side uniformly in \(\varepsilon\), and then let \(\varepsilon\downarrow0\).

For the first term, the bound
\begin{equation*}
(1-y)(12-y)\ge\frac{23}{4} \qquad\text{on }\left[0,1/2\right]
\end{equation*}
yields
\begin{equation} \label{L2con_11}
\int_0^{1/2}\Phi^2\,dy \le \frac4{23} \int_0^1(1-y)(12-y)\Phi^2\,dy.
\end{equation}
Next, we observe that
\begin{equation*}
\left((1-y)\Phi^2\right)_y=-\Phi^2+2(1-y)\Phi\Phi_y.
\end{equation*}
Integrating over \([1/2,1-\varepsilon]\), we obtain
\begin{equation*}
\begin{split}
\int_{1/2}^{1-\varepsilon}\Phi^2\,dy + \varepsilon\Phi^2(1-\varepsilon) & = \frac12 \Phi\left(\frac12\right)^2 + 2\int_{1/2}^{1-\varepsilon} (1-y)\Phi\Phi_y\,dy.
\end{split}
\end{equation*}
By Young's inequality, the last term is estimated as
\begin{equation*}
2 \int_{1/2}^{(1-\varepsilon)} (1-y)\Phi\Phi_y \, dy \leq \frac12 \int_{1/2}^{(1-\varepsilon)} \Phi^2 \, dy + 2 \int_{1/2}^{(1-\varepsilon)} (1-y)^2\Phi_y^2 \, dy.
\end{equation*}
Thus, we have
\begin{equation} \label{L2con_12}
\int_{1/2}^{1-\varepsilon }\Phi^2 \,dy \leq \Phi\left(\frac12\right)^2 + 4\int_{1/2}^{1-\varepsilon}(1-y)^2\Phi_y^2\,dy.
\end{equation}
It remains to control the term $\Phi^2(1/2)$. Let $J:=\left(\frac14,\frac12\right)$. By the fundamental theorem of calculus,
\begin{equation*}
\Phi\left(\frac12\right) = \Phi(z)+\int_z^{1/2}\Phi_y(s)\,ds, \qquad  z\in J.
\end{equation*}
Using $(a+b)^2 \leq 2a^2 + 2b^2$ and applying the Cauchy--Schwarz inequality, we have
\begin{equation*}
\begin{split}
\Phi\left(\frac12\right)^2  & \le 2 \Phi(z)^2 + 2 \bigg( \int_z^{1/2} \Phi_y(s) \, ds \bigg)^2 \\
& \leq 2 \Phi(z)^2 + 2 \left( \frac{1}{2} - z \right) \int_z^{1/2} \Phi_y(s)^2 \, ds \\
& \leq 2 \Phi(z)^2 + \frac{1}{2} \int_J \Phi_y^2 \, dy.
\end{split}
\end{equation*}
Integrating both sides over $J$ and using $|J| = 1/4$, we obtain
\begin{equation} \label{L2con_13}
\Phi \left(\frac12 \right)^2 \leq 8 \int_J \Phi^2 \, dy + \frac12 \int_J \Phi_y^2 \, dy.
\end{equation}

Combining \eqref{L2con_1}--\eqref{L2con_13}, we have
\begin{equation} \label{combPhiL2}
\int_0^{1-\varepsilon}\Phi^2\,dy \le \frac4{23} \int_0^1(1-y)(12-y)\Phi^2\,dy  + 8 \int_J \Phi^2 \, dy + \frac12 \int_J \Phi_y^2 \, dy + 4\int_{1/2}^{1-\varepsilon}(1-y)^2\Phi_y^2\,dy.
\end{equation}
To obtain a bound uniform in $\varepsilon$, we estimate the last three terms by the weighted integrals appearing in $\mathcal{H}[\Phi]$. Since \(J\subset(0,1/2)\), \eqref{L2con_11} gives
\begin{equation*}
8\int_J\Phi^2\,dy\leq\frac{32}{23}\int_0^1(1-y)(12-y)\Phi^2\,dy.
\end{equation*}
Moreover, the bounds
\begin{equation*}
(7-3y)y(1-y)^2\geq\frac{11}{32}\quad\text{on }J,\qquad (7-3y)y\geq\frac{11}{4}\quad\text{on }[1/2,1]
\end{equation*}
yield
\begin{equation*}
\frac12\int_J\Phi_y^2\,dy+4\int_{1/2}^{1-\varepsilon}(1-y)^2\Phi_y^2\,dy\leq\frac{32}{11}\int_0^1(7-3y)y(1-y)^2\Phi_y^2\,dy.
\end{equation*}
Substituting these estimates into \eqref{combPhiL2}, we obtain the uniform bound
\begin{equation*}
\int_0^{1-\varepsilon}\Phi^2\,dy \leq \frac{36}{23}\int_0^1(1-y)(12-y)\Phi^2\,dy + \frac{32}{11}\int_0^1(7-3y)y(1-y)^2\Phi_y^2\,dy.
\end{equation*}
Hence, by letting \(\varepsilon\downarrow0\) and applying \eqref{eq:new-poincare}, we arrive at \eqref{eq:F-controls-Psi-L2}.
\end{proof}

We next use the nonlinear remainder terms in \(\mathcal F\) to control the \(L^4\)-norm.

\begin{lemma}
There exists a constant \(C>0\) such that
\begin{equation}\label{eq:F-controls-Psi-L4}
\int_0^1 \Phi^4 \, dy \leq C\mathcal F[\Phi]
\end{equation}
for any \(\phi\in H^1(\mathbb R)\), with \(\Phi\) defined by \eqref{eq:profile-coordinate}.
\end{lemma}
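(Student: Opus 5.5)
The plan is to extract the quartic control from the nonnegative remainder $\int_0^1(g-G)^2\,dy$ in the decomposition of Lemma~\ref{lem:reference-shock-nonlinear-decomposition}, with the quadratic part $\mathcal H[\Phi]$ handling everything of lower order. All three terms in \eqref{eq:cubic-full-nonlinear-decomposition} are nonnegative: $\mathcal H\geq0$ by \eqref{eq:new-poincare}, and the other two are squares. Hence
\begin{equation*}
\mathcal H[\Phi]\leq\mathcal F[\Phi],\qquad \int_0^1(g-G)^2\,dy\leq\mathcal F[\Phi].
\end{equation*}
Lemma~\ref{lem:shock-endpoint-recovery} then gives $\int_0^1\Phi^2\,dy\leq C\mathcal F[\Phi]$. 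It remains to control $\int_0^1 g^2\,dy$, because pointwise
\begin{equation*}
\tfrac14\Phi^4=\bigl(g-(1+y)\Phi\bigr)^2\leq 2g^2+8\Phi^2 .
\end{equation*}

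To bound $\int_0^1 g^2\,dy$, I will use $\int_0^1 g^2\,dy=\int_0^1(g-G)^2\,dy+G^2$, so the problem reduces to bounding $G^2$. This is the step I expect to be the real obstacle, since $G$ contains $\frac12\int\Phi^2$, which is only quadratically controlled, while $G^2$ is quartic. I would first try the remaining square $6(G+\frac53M)^2\leq\mathcal F$ together with
\begin{equation*}
M^2\leq\Bigl(\int_0^1(3-2y)^2\,dy\Bigr)\int_0^1\Phi^2\,dy\leq C\mathcal F[\Phi].
\end{equation*}
These give
\begin{equation*}
G^2\leq 2\Bigl(G+\tfrac53M\Bigr)^2+\tfrac{50}{9}M^2\leq C\mathcal F[\Phi].
\end{equation*}
This closes the argument cleanly: $\int_0^1 g^2\,dy\leq C\mathcal F$, and hence $\int_0^1\Phi^4\,dy\leq 8\int_0^1 g^2\,dy+32\int_0^1\Phi^2\,dy\leq C\mathcal F[\Phi]$.

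Regularity only needs a brief remark. For $\phi\in H^1(\mathbb R)$, the function $\Phi$ is bounded, so every integral appearing above is finite and the algebraic manipulations are legitimate. The bound $\int_0^1\Phi^2\,dy<\infty$ follows from Lemma~\ref{lem:shock-endpoint-recovery} as already stated. No smallness of $\Phi$ is used at any point, which is what the large-perturbation theory requires.
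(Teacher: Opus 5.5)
Your proof is correct and follows essentially the same route as the paper. It uses the pointwise bound $\Phi^4\le 8g^2+32\Phi^2$ and the splitting $\int_0^1 g^2\,dy=\int_0^1(g-G)^2\,dy+G^2$, then controls $G^2$ through the square $(G+\frac53M)^2$ together with $M^2\le\frac{13}{3}\int_0^1\Phi^2\,dy\le C\mathcal F[\Phi]$; you also state explicitly that $\mathcal H[\Phi]\ge0$ by \eqref{eq:new-poincare}, which the paper uses implicitly to obtain $\mathcal H[\Phi]\le\mathcal F[\Phi]$ and $\int_0^1(g-G)^2\,dy\le\mathcal F[\Phi]$.
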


\begin{proof}
Recall from \eqref{gGM} that
\begin{equation*}
g(y) = \frac12 \Phi^2+(1+y)\Phi, \qquad G =\int_0^1 g(y) \, dy, \qquad M =\int_0^1(3-2y)\Phi \, dy.
\end{equation*}
The definition of \(g\) gives
\begin{equation*}
\Phi^4 =4\left(g-(1+y)\Phi\right)^2 \leq 8g^2+8(1+y)^2\Phi^2 \leq 8g^2+32\Phi^2.
\end{equation*}
Thus, in view of \eqref{eq:F-controls-Psi-L2} and \eqref{eq:cubic-full-nonlinear-decomposition}, it suffices to show that
\begin{equation}\label{eq:F-controls-g-L2}
\int_0^1g^2\,dy\leq C\mathcal F[\Phi].
\end{equation}

We first observe that
\begin{equation*}
\int_0^1 g^2\,dy=\int_0^1(g-G)^2\,dy+G^2
\end{equation*}
and, by \eqref{eq:cubic-full-nonlinear-decomposition},
\begin{equation*}
\int_0^1(g-G)^2\,dy\leq\mathcal F[\Phi], \qquad \left(G+\frac53M\right)^2\leq\frac16\mathcal F[\Phi].
\end{equation*}
Using these relations and Young's inequality, we obtain
\begin{equation*}
\int_0^1 g^2 \, dy \leq \mathcal{F}[\Phi] + \left[ \left(G+\frac53M\right)-\frac53M \right]^2 \leq C \left( \mathcal{F}[\Phi] + M^2 \right).
\end{equation*}
The remaining term $M^2$ is estimated using the definition of \(M\) and the Cauchy--Schwarz inequality:
\begin{equation*}
M^2 \leq\left(\int_0^1(3-2y)^2\,dy\right) \left(\int_0^1\Phi^2\,dy\right) =\frac{13}{3}\int_0^1\Phi^2\,dy \leq C\mathcal F[\Phi].
\end{equation*}
Combining the last two estimates yields \eqref{eq:F-controls-g-L2}, and hence \eqref{eq:F-controls-Psi-L4}.
\end{proof}

\begin{proof}[Proof of Proposition~\ref{prop:reference-shock-coercivity}]
Together with \eqref{eq:new-poincare}, the estimates \eqref{eq:F-controls-Psi-L2} and \eqref{eq:F-controls-Psi-L4} show that there exists a constant \(c>0\) such that
\begin{equation}\label{eq:nonlinear-profile-coercivity}
\mathcal F[\Phi]\geq c \left(\int_0^1(7-3y)y(1-y)^2\Phi_y^2\,dy+\int_0^1\Phi^2\,dy+\int_0^1\Phi^4\,dy\right).
\end{equation}
We now return to the original variables. By \eqref{eq:profile-coordinate}--\eqref{eq:profile-jacobian},
\begin{equation*}
\begin{split}
& 9u_m^5\int_0^1(7-3y)y(1-y)^2\Phi_y^2\,dy= \int_{\mathbb R}(5u_m-U^S)\phi_\xi^2\,d\xi, \\
& 9u_m^5\int_0^1\Phi^2\,dy=3\int_{\mathbb R}U^S_\xi u_m^2\phi^2\,d\xi,
\end{split}
\end{equation*}
and
\begin{equation*}
9u_m^5\int_0^1\Phi^4\,dy=3\int_{\mathbb R}U^S_\xi\phi^4\,d\xi.
\end{equation*}
Since \(\mathcal D_{S,f_0}^{0}[U^S;\phi]=9u_m^5\mathcal F[\Phi]\), the estimate \eqref{eq:nonlinear-profile-coercivity} gives
\begin{equation*}
\mathcal D_{S,f_0}^{0}[U^S;\phi]\geq c\left(\int_{\mathbb R}(5u_m-U^S)\phi_\xi^2\,d\xi+\int_{\mathbb R}U^S_\xi\left(u_m^2\phi^2+\phi^4\right)\,d\xi\right)
\end{equation*}
for some \(c>0\). Since \(-2u_m\leq U^S\leq u_m\), one has \(5u_m-U^S\geq4u_m\), which proves \eqref{eq:reference-shock-coercivity-statement}.
\end{proof}

\subsection{Extension of the reference shock coercivity to positive \texorpdfstring{\(\eta\)}{eta}}

The next lemma quantifies the change in the reference shock block when \(\eta=0\) is replaced by a small positive \(\eta\).

\begin{lemma}\label{lem:exact-cubic-shock-eta-coercivity}
There exists a constant \(C>0\) such that
\begin{equation}\label{eq:exact-cubic-shock-eta-error}
\left|\mathcal D_{S,f_0}^{\eta}[U^S;\phi]-\mathcal D_{S,f_0}^{0}[U^S;\phi]\right|\leq C\eta\left(u_m\int_{\mathbb R}\phi_\xi^2\,d\xi+\int_{\mathbb R}U^S_\xi\left(u_m^2\phi^2+\phi^4\right)d\xi\right)
\end{equation}
for any \(\phi\in H^1(\mathbb R)\) and \(0\leq\eta\leq1\).
\end{lemma}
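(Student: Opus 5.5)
The plan is a direct computation: $\eta$ enters $\mathcal D_{S,f_0}^{\eta}[U^S;\phi]$ only through the constant $\alpha_\eta=(5-\eta)u_m$, and it does so affinely in each of the three pieces of \eqref{eq:perturbed-shock-block} (the third piece depends on $\eta$ through a square). So I would write the difference $\mathcal D_{S,f_0}^{\eta}-\mathcal D_{S,f_0}^{0}$ exactly as a sum of three explicit terms, each carrying a factor $\eta u_m$. I would then bound each term by the right-hand side of \eqref{eq:exact-cubic-shock-eta-error}, using $|U^S|\le 2u_m$ and $\int_{\mathbb R}U^S_\xi\,d\xi=u_m-U_-=3u_m$.

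\emph{Diffusion term.} The first integral changes by exactly $-\eta u_m\int_{\mathbb R}\phi_\xi^2\,d\xi$, which is already of the required form.

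\emph{Flux term.} From \eqref{eq:general-flux-nonlinear-form}, $\alpha_\eta$ appears in $\mathcal B^\eta_{f_0}(v,z)$ only through
\[
\alpha_\eta\bigl(f_0(v+z)-f_0(v)-f_0'(v)z\bigr)=\alpha_\eta(3vz^2+z^3).
\]
Hence
\[
\mathcal B^\eta_{f_0}(U^S,\phi)-\mathcal B^0_{f_0}(U^S,\phi)=-\eta u_m(3U^S\phi^2+\phi^3).
\]
I would bound $|3U^S\phi^2|\le 6u_m\phi^2$, and $u_m|\phi|^3\le\frac12(u_m^2\phi^2+\phi^4)$ by Young's inequality. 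This controls the change by $C\eta\int U^S_\xi(u_m^2\phi^2+\phi^4)\,d\xi$.

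\emph{Shift term.} This is the only step that is not immediate, because it is quadratic. I would write
\[
\mathcal Y_S^\eta=\mathcal Y_S^0-A,\qquad A:=\eta u_m\int_{\mathbb R}U^S_\xi\phi\,d\xi,
\]
so that the change equals
\[
\frac5{u_m}\bigl((\mathcal Y_S^0-A)^2-(\mathcal Y_S^0)^2\bigr)=\frac5{u_m}\bigl(A^2-2A\,\mathcal Y_S^0\bigr).
\]
Cauchy--Schwarz with the measure $U^S_\xi\,d\xi$ gives
\[
A^2\le 3\eta^2u_m\int U^S_\xi\,u_m^2\phi^2\,d\xi .
\]
For $\mathcal Y_S^0$ in \eqref{def_YS0}, I would use $|5u_m-U^S|\le7u_m$ and Cauchy--Schwarz again:
\[
(\mathcal Y_S^0)^2\le C\Bigl(u_m\int U^S_\xi u_m^2\phi^2\,d\xi+\Bigl(\int U^S_\xi\phi^2\,d\xi\Bigr)^2\Bigr)\le C u_m\int U^S_\xi(u_m^2\phi^2+\phi^4)\,d\xi,
\]
where the last step uses $\bigl(\int U^S_\xi\phi^2\bigr)^2\le 3u_m\int U^S_\xi\phi^4$. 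Then
\[
\frac5{u_m}|A\,\mathcal Y_S^0|\le\frac5{u_m}\,|A|\,|\mathcal Y_S^0|\le C\eta\int U^S_\xi(u_m^2\phi^2+\phi^4)\,d\xi,
\]
and the $A^2$ term carries $\eta^2\le\eta$ since $\eta\le1$.

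Summing the three bounds gives \eqref{eq:exact-cubic-shock-eta-error}. The main point needing care is checking that the powers of $u_m$ balance in the shift term, so that the constant is genuinely independent of $u_m$ scaling; the Cauchy--Schwarz bounds with total mass $3u_m$ are exactly what makes them balance.
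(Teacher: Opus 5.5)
Your proposal is correct and follows the paper's proof essentially step for step. You use the same exact expansion of the difference: the $-\eta u_m\int_{\mathbb R}\phi_\xi^2\,d\xi$ term, the $-\eta u_m\int_{\mathbb R}U^S_\xi(3U^S\phi^2+\phi^3)\,d\xi$ term, and the cross and square terms coming from $\mathcal Y_S^\eta=\mathcal Y_S^0-\eta u_m\int_{\mathbb R}U^S_\xi\phi\,d\xi$. You bound them with the same tools, namely $|U^S|\le 2u_m$, Young's inequality, and Cauchy--Schwarz against the measure $U^S_\xi\,d\xi$ of total mass $3u_m$. The only difference is cosmetic: you bound the cross term by a product of square roots, whereas the paper uses Young's inequality there.
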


\begin{proof}
By \eqref{eq:shock-shift-functional} and \(\alpha_\eta=(5-\eta)u_m\),
\begin{equation*}
\mathcal Y_S^\eta[U^S;\phi] = \mathcal Y_S^0[U^S;\phi] -\eta u_m\int_{\mathbb R}U^S_\xi\phi\,d\xi.
\end{equation*}
Using this identity in \eqref{eq:perturbed-shock-block}, we obtain
\begin{equation} \label{temp_id}
\begin{split}
\mathcal D_{S,f_0}^{\eta}[U^S;\phi]-\mathcal D_{S,f_0}^{0}[U^S;\phi] &= -\eta u_m\int_{\mathbb R}\phi_\xi^2 \, d\xi -\eta u_m \int_{\mathbb R} U^S_\xi \left(3U^S\phi^2+\phi^3\right) \, d\xi\\
&\quad - 10 \eta \left(\int_{\mathbb R} U^S_\xi \phi \, d\xi\right) \mathcal Y_S^0[U^S;\phi]  + 5\eta^2u_m \left(\int_{\mathbb R} U^S_\xi \phi \, d\xi \right)^2.
\end{split}
\end{equation}

The second term on the right-hand side is estimated using \(|U^S|\leq2u_m\) and \(u_m|\phi|^3\leq(u_m^2\phi^2+\phi^4)/2\) as follows:
\begin{equation} \label{secoterm}
\left| \eta u_m \int_{\mathbb R} U^S_\xi \left(3U^S\phi^2+\phi^3\right) \, d\xi \right| \leq C \eta \int_{\mathbb R} U^S_\xi \left( u_m^2\phi^2+\phi^4\right) \, d\xi.
\end{equation}
We next estimate the third term. By Young's inequality,
\begin{equation*}
\left|\left(\int_{\mathbb R}U^S_\xi\phi\,d\xi\right)\mathcal Y_S^0[U^S;\phi]\right|
\leq \frac{u_m}{2} \left( \int_\mathbb{R} U^S_\xi \phi \, d\xi \right)^2 + \frac{\left| \mathcal{Y}_S^0[U^S;\phi] \right|^2}{2u_m}.
\end{equation*}
Since $\int_{\mathbb R}U^S_\xi\,d\xi=3u_m$, the Cauchy--Schwarz inequality gives
\begin{equation} \label{um2USxi}
u_m\left(\int_{\mathbb R}U^S_\xi\phi\,d\xi\right)^2 \leq u_m \left( \int_\mathbb{R} U^S_\xi \, d\xi \right) \left( \int_\mathbb{R} U^S_\xi \phi^2 \, d\xi \right) = 3u_m^2\int_{\mathbb R}U^S_\xi\phi^2\,d\xi.
\end{equation}
On the other hand, by \eqref{def_YS0}, the Cauchy--Schwarz inequality, and $|5u_m - U^S| \leq Cu_m$,
\begin{equation*}
\begin{split}
\frac{\left|\mathcal Y_S^0[U^S;\phi]\right|^2}{u_m} &= \frac{1}{u_m} \bigg[ \int_\mathbb{R} U^S_\xi \left( (5u_m - U^S) \phi + \frac12 \phi^2 \right) \, d\xi \bigg]^2 \\
& \leq \frac{1}{u_m} \left( \int_\mathbb{R} U^S_\xi \, d\xi \right) \bigg[ \int_\mathbb{R} U^S_\xi \left( (5u_m - U^S) \phi + \frac12 \phi^2 \right)^2 \, d\xi \bigg] \\
& \leq C \int_\mathbb{R} U^S_\xi (u_m^2 \phi^2 + \phi^4) \, d\xi.
\end{split}
\end{equation*}
We therefore obtain
\begin{equation} \label{tempthirdterm}
\left|\left(\int_{\mathbb R}U^S_\xi\phi\,d\xi\right)\mathcal Y_S^0[U^S;\phi]\right|
\leq C\int_\mathbb{R} U^S_\xi (u_m^2 \phi^2 + \phi^4) \, d\xi.
\end{equation}
Finally, since \(0\leq\eta\leq1\), \eqref{um2USxi} also gives
\begin{equation*} 
\eta^2u_m \left(\int_{\mathbb R}U^S_\xi\phi\,d\xi\right)^2 \leq C\eta\int_{\mathbb R}U^S_\xi u_m^2\phi^2\,d\xi.
\end{equation*}
Combining \eqref{temp_id}, \eqref{secoterm}, and \eqref{tempthirdterm}, we obtain \eqref{eq:exact-cubic-shock-eta-error}.
\end{proof}

\subsection{Coercivity of the rarefaction block for the cubic flux}

\begin{lemma}\label{lem:cubic-rarefaction-coercivity}
Under the assumptions in Proposition~\ref{prop:large-L2-apriori}, there exists a constant \(c_\eta>0\) such that
\begin{equation}\label{eq:rarefaction-coercivity}
\mathcal D_{R,f_0}^{\eta}[u^R;\phi]\geq c_\eta \int_{\mathbb R}u^R_\xi\left(u_m^2\phi^2+\phi^4\right) \, d\xi
\end{equation}
for all \(t\in[0,T]\).
\end{lemma}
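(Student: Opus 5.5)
The plan is to prove \eqref{eq:rarefaction-coercivity} pointwise in $\xi$: compute the integrand of $\mathcal D_{R,f_0}^{\eta}$ explicitly for the cubic flux, and show it is bounded below by $c_\eta(u_m^2\phi^2+\phi^4)$ at every point. Since $u^R_\xi>0$ by Lemma~\ref{lem:approximate-rarefaction}(i), multiplying by $u^R_\xi$ and integrating then gives the claim. No Poincar\'e-type argument is needed, because the rarefaction block carries no derivative terms and no shift term.

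\emph{Step 1 (explicit integrand).} Set $g=f_0$, $v=u^R$, $z=\phi$ in \eqref{eq:general-flux-nonlinear-form}. Using $f_0(v+z)-f_0(v)=3v^2z+3vz^2+z^3$ and $\int_0^z(f_0(v+s)-f_0(v))\,ds=\tfrac32v^2z^2+vz^3+\tfrac14z^4$, a direct expansion gives
\begin{equation*}
\mathcal B_{f_0}^\eta(v,z)-\tfrac12 f_0'(v)z^2 = z^2\Bigl(3v(\alpha_\eta-v)+(\alpha_\eta+v)z+\tfrac34 z^2\Bigr)=:z^2P_v(z).
\end{equation*}

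\emph{Step 2 (strict positivity of the quadratic $P_v$).} The discriminant of $P_v$ is
\begin{equation*}
\Delta(v)=(\alpha_\eta+v)^2-9v(\alpha_\eta-v).
\end{equation*}
At $v=u_m$ with $\alpha_\eta=(5-\eta)u_m$,
\begin{equation*}
\Delta(u_m)=u_m^2\bigl((6-\eta)^2-9(4-\eta)\bigr)=-\eta(3-\eta)u_m^2<0 \quad\text{for }0<\eta<3.
\end{equation*}
This is exactly where $\eta>0$ is needed: at $\eta=0$ the discriminant vanishes and coercivity is lost. Since $v=u^R\in(u_m,u_m+\delta_R)$ and $\Delta$ is smooth in $v$, we have
\begin{equation*}
\Delta(v)\le -\eta(3-\eta)u_m^2+C u_m\delta_R\le -\tfrac{\eta}{2}u_m^2,
\end{equation*}
provided $\delta_R/u_m\le\delta_0$ with $\delta_0$ small depending on $\eta$. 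This is consistent with the order of quantifiers in Proposition~\ref{prop:large-L2-apriori}.

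\emph{Step 3 (coercive lower bound).} Let $P_v(z)=\tfrac34z^2+bz+c$ with $b=\alpha_\eta+v$ and $c=3v(\alpha_\eta-v)$; here $b$ and $c$ are comparable to $u_m$ and $u_m^2$, uniformly in $v$. The negative discriminant gives a quantitative lower bound. For small $\theta>0$, consider
\begin{equation*}
P_v(z)-\theta\bigl(u_m^2+z^2\bigr).
\end{equation*}
It is still a quadratic in $z$ with positive leading coefficient, and its discriminant is $\Delta(v)+O(\theta u_m^2)$. Choosing $\theta=c'\eta$ small keeps this discriminant negative, so
\begin{equation*}
P_v(z)\ge\theta(u_m^2+z^2)\quad\text{for all } z,
\end{equation*}
and hence $z^2P_v(z)\ge c_\eta(u_m^2z^2+z^4)$. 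Integrating against $u^R_\xi>0$ yields \eqref{eq:rarefaction-coercivity}.

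The only delicate point is Step 2. The cubic flux makes the discriminant degenerate exactly at $\eta=0$, $v=u_m$, so the gap is of order $\eta u_m^2$. The smallness of $\delta_R/u_m$ must therefore be chosen after $\eta$, and the resulting constant $c_\eta$ is of order $\eta$.
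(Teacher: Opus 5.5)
Your proof is correct and follows the paper's argument. The paper computes the same integrand $\phi^2\bigl(\tfrac34\phi^2+(\alpha_\eta+u^R)\phi+3u^R(\alpha_\eta-u^R)\bigr)$ and completes the square, getting the constant term $\tfrac13(5u^R-\alpha_\eta)(\alpha_\eta-2u^R)=-\tfrac13\Delta(u^R)$ (your discriminant up to sign), then absorbs a small multiple of $u_m^2+\phi^2$ by Young's inequality as in your Step 3; the one difference is that this factorization gives $5u^R-\alpha_\eta\ge\eta u_m$ for every $u^R\ge u_m$, so the paper needs $\delta_R/u_m$ small only for $\alpha_\eta-2u^R\ge cu_m$, independently of $\eta$, whereas your perturbative Step 2 makes $\delta_0$ depend on $\eta$ (harmless given the quantifier order in Proposition~\ref{prop:large-L2-apriori}).
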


\begin{proof}
By the definition \eqref{eq:perturbed-rarefaction-block} with \(f_0(u)=u^3\), we have
\begin{equation*}
\mathcal D_{R,f_0}^{\eta}[u^R;\phi] =\int_{\mathbb R}u^R_\xi\phi^2 \left(\frac34\phi^2+(\alpha_\eta+u^R)\phi +3u^R(\alpha_\eta-u^R)\right) \, d\xi.
\end{equation*}
For the quadratic factor in the integrand, completing the square gives
\begin{equation}\label{eq:rarefaction-completion}
\frac34\phi^2+(\alpha_\eta+u^R)\phi+3u^R(\alpha_\eta-u^R) =\frac34\left(\phi+\frac{2(\alpha_\eta+u^R)}3\right)^2 +\frac{(5u^R-\alpha_\eta)(\alpha_\eta-2u^R)}3.
\end{equation}
Since \(u_m\leq u^R\leq u_m+\delta_R\),
\begin{equation*}
5u^R-\alpha_\eta\geq\eta u_m,
\qquad
\alpha_\eta-2u^R
\geq(3-\eta)u_m-2\delta_R
\geq c u_m,
\end{equation*}
provided that \(\delta_R/u_m\) is sufficiently small. Hence we have
\begin{equation*}
\frac{(5u^R-\alpha_\eta)(\alpha_\eta-2u^R)}3 \geq \frac{c}{3} \eta u_m^2.
\end{equation*}
Moreover, since \(\alpha_\eta+u^R\leq Cu_m\), Young's inequality gives, for any \(0<\gamma<1\),
\begin{equation*}
\left(\phi+\frac{2(\alpha_\eta+u^R)}3\right)^2 \geq (1-\gamma)\phi^2 -C\frac{1-\gamma}{\gamma}u_m^2.
\end{equation*}
We choose \(\gamma\) sufficiently close to \(1\) so that $C\frac{1-\gamma}{\gamma}\leq\frac{c}{6}\eta$. Then, by \eqref{eq:rarefaction-completion},
\begin{equation*}
\begin{split}
& \frac34\phi^2+(\alpha_\eta+u^R)\phi+3u^R(\alpha_\eta-u^R) \\
&\quad \geq \frac34(1-\gamma)\phi^2 +\left(\frac{c}{3}\eta-\frac34C\frac{1-\gamma}{\gamma}\right)u_m^2 \geq \frac34(1-\gamma)\phi^2+\frac{5c}{24}\eta u_m^2 \geq c_\eta\left(u_m^2+\phi^2\right)
\end{split}
\end{equation*}
for some \(c_\eta>0\). Substituting this bound into the expression for \(\mathcal D_{R,f_0}^{\eta}[u^R;\phi]\) and using \(u^R_\xi\geq0\) yields \eqref{eq:rarefaction-coercivity}.
\end{proof}

\subsection{Perturbation estimates for the shock and rarefaction blocks}

Let
\begin{equation}\label{eq:perturbed-shock-coordinate}
L:=u_m-u_-,\qquad y:=\frac{u^S-u_-}{L},\qquad \Phi(y):=\frac{\phi(\xi(y))}{u_m},
\end{equation}
and let
\begin{equation}\label{eq:reference-shock-pullback}
y_0(\xi):=\frac{U^S(\xi)-U_-}{3u_m},\qquad
\phi^U(\xi):=u_m\Phi(y_0(\xi)).
\end{equation}
By \eqref{eq:profile-jacobian}, \eqref{eq:perturbed-shock-ode}, and \eqref{eq:perturbed-factor-bound}, we have \(\phi^U\in H^1(\mathbb R)\) whenever \(\phi\in H^1(\mathbb R)\).

To organize the perturbation argument, we use the following decompositions:
\begin{equation} \label{eq:shock-perturbative-decomposition}
\begin{split}
\mathcal D_{S,f}^{\eta}[u^S;\phi] & = \mathcal D_{S,f_0}^{0}[U^S;\phi^U] +\left(\mathcal D_{S,f_0}^{\eta}[U^S;\phi^U]-\mathcal D_{S,f_0}^{0}[U^S;\phi^U]\right)\\
& \quad +\left(\mathcal D_{S,f_0}^{\eta}[u^S;\phi]-\mathcal D_{S,f_0}^{\eta}[U^S;\phi^U]\right) +\left(\mathcal D_{S,f}^{\eta}[u^S;\phi]-\mathcal D_{S,f_0}^{\eta}[u^S;\phi]\right)
\end{split}
\end{equation}
and
\begin{equation}\label{eq:rarefaction-perturbative-decomposition}
\mathcal D_{R,f}^{\eta}[u^R;\phi] =\mathcal D_{R,f_0}^{\eta}[u^R;\phi] +\left(\mathcal D_{R,f}^{\eta}[u^R;\phi]-\mathcal D_{R,f_0}^{\eta}[u^R;\phi]\right).
\end{equation}
The first two terms on the right-hand side of \eqref{eq:shock-perturbative-decomposition} are controlled by Proposition~\ref{prop:reference-shock-coercivity} and Lemma~\ref{lem:exact-cubic-shock-eta-coercivity}, respectively, while Lemma~\ref{lem:cubic-rarefaction-coercivity} controls the reference rarefaction block. It therefore remains to estimate the perturbations of the shock profile and the flux; these are treated separately below.

\subsubsection{Perturbation of the shock profile}

We first estimate the effect of replacing the reference profile \(U^S\) by \(u^S\) while keeping the cubic flux fixed.

\begin{lemma}\label{lem:shock-profile-perturbation}
Under the assumptions in Proposition~\ref{prop:large-L2-apriori}, there exists a constant \(C>0\) such that
\begin{equation}\label{eq:shock-profile-functional-perturbation}
\left|\mathcal D_{S,f_0}^{\eta}[u^S;\phi]-\mathcal D_{S,f_0}^{\eta}[U^S;\phi^U]\right|
\leq C\varepsilon_f\left(u_m\int_{\mathbb R}\phi_\xi^2 \, d\xi + \int_{\mathbb R}u^S_\xi\left(u_m^2\phi^2+\phi^4\right) \, d\xi \right)
\end{equation}
for all \(t\in[0,T]\), where \(\phi^U\) is defined in \eqref{eq:reference-shock-pullback}.
\end{lemma}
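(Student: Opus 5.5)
The plan is to pass both functionals to the profile coordinate and compare them term by term as integrals over $(0,1)$ in $y$. For $u^S$ we use $y=(u^S-u_-)/L$. By \eqref{eq:perturbed-shock-ode},
\begin{equation*}
y_\xi = L^2 y(1-y)^2 b(u^S), \qquad u^S_\xi\,d\xi = L\,dy .
\end{equation*}
For $U^S$ we use $y_0=(U^S+2u_m)/(3u_m)$, with $y_{0,\xi}=9u_m^2y_0(1-y_0)^2$ and $U^S_\xi\,d\xi=3u_m\,dy_0$. By construction, $\phi$ and $\phi^U$ are both represented by the same function $u_m\Phi(y)$. Each term of $\mathcal D^\eta_{S,f_0}[V;\cdot]$ then becomes an integral in $y$ whose only dependence on the profile is through a few data. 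These are the constants $L$ versus $3u_m$, the profile value $V=u_-+Ly$ versus $-2u_m+3u_my$, and the factor $b(u^S(\xi(y)))$ versus $1$. The factor $b$ enters only through the Jacobian in the diffusion term.

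Concretely, I would treat the diffusion term first:
\begin{equation*}
\int_{\mathbb R}(\alpha_\eta-V)\phi_\xi^2\,d\xi=\int_0^1(\alpha_\eta-V)\,u_m^2\Phi_y^2\,y_\xi\,dy .
\end{equation*}
For $u^S$ the weight is
\begin{equation*}
(\alpha_\eta-u_--Ly)\,L^2\,y(1-y)^2\,b ,
\end{equation*}
and for $U^S$ it is
\begin{equation*}
(\alpha_\eta+2u_m-3u_my)\,9u_m^2\,y(1-y)^2 .
\end{equation*}
Lemma~\ref{lem:perturbed-degenerate-profile} gives $|L-3u_m|\le C\varepsilon_f u_m$ and $|b-1|\le C\varepsilon_f$. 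Hence the two weights differ pointwise by at most $C\varepsilon_f$ times either weight. The difference of the diffusion terms is therefore bounded by $C\varepsilon_f\int(\alpha_\eta-u^S)\phi_\xi^2\,d\xi\le C\varepsilon_f u_m\int\phi_\xi^2\,d\xi$.

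The zeroth-order terms come next. Here $\int V_\xi[\mathcal B^\eta_{f_0}(V,\phi)+\tfrac12(f_0'(V)-2f_0'(u_m))\phi^2]\,d\xi$ equals $\int_0^1 P(V(y),u_m\Phi)\,\ell\,dy$ with $\ell\in\{L,3u_m\}$. The integrand $P$ is an explicit polynomial in $V$ and $\phi$, cubic and quartic in $\phi$ with coefficients polynomial in $V$. Its $V$-derivative is bounded by $C(u_m\phi^2+|\phi|^3)$ for $|V|\le 3u_m$. Since $|V^{u^S}(y)-V^{U^S}(y)|\le C\varepsilon_f u_m$, the difference is bounded by $C\varepsilon_f\int_0^1(u_m^2\Phi^2+\Phi^4)u_m^3\,dy$. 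This is comparable to $C\varepsilon_f\int u^S_\xi(u_m^2\phi^2+\phi^4)\,d\xi$, using $u_m|\phi|^3\le\tfrac12(u_m^2\phi^2+\phi^4)$. The shift term $\tfrac5{u_m}(\mathcal Y^\eta_S)^2$ is handled the same way. Each $\mathcal Y$ is $\int_0^1((\alpha_\eta-V)u_m\Phi+\tfrac12u_m^2\Phi^2)\,\ell\,dy$, and the two $\mathcal Y$'s differ by $C\varepsilon_f u_m\int_0^1(u_m|\Phi|+u_m\Phi^2)\,dy$. I then write $A^2-B^2=(A-B)(A+B)$ and apply Cauchy--Schwarz as in Lemma~\ref{lem:exact-cubic-shock-eta-coercivity}, which bounds the product by $C\varepsilon_f\int u^S_\xi(u_m^2\phi^2+\phi^4)\,d\xi$.

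The step needing the most care is the diffusion term, because of the endpoint degeneracy at $y=1$. The comparison must be multiplicative in the weight, not additive, since no unweighted control of $\Phi_y^2$ near $y=1$ is available. This is exactly why the factorization \eqref{eq:perturbed-shock-factorization}, with its uniform bound on $b$, is needed: it makes both Jacobians have the identical degenerate factor $y(1-y)^2$. A smaller point is that the integrand $P$ must involve $b$ only through the Jacobian. This holds because the $d\xi$ measure weighted by $V_\xi$ becomes $\ell\,dy$ exactly, and because $\alpha_\eta-V\ge cu_m$ makes all weights comparable. Finally, the right-hand sides in $y$ convert back to the $u^S$-variables, giving \eqref{eq:shock-profile-functional-perturbation}.
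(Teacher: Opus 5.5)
Your proposal is correct and follows the paper's proof essentially step by step. Both arguments pass to the profile coordinate $y$ with the common function $\Phi$ and compare the diffusion weights multiplicatively, using the shared degenerate factor $y(1-y)^2$ and $|b-1|\le C\varepsilon_f$. Both then bound the zeroth-order coefficient differences by $C\varepsilon_f$ and treat the shift term with a difference-of-squares splitting plus Cauchy--Schwarz; the paper normalizes $\ell=L/(3u_m)$ and writes $\ell^2A^2-A_0^2=(\ell^2-1)A^2+(A-A_0)(A+A_0)$, which is equivalent to your $(\mathcal Y_1-\mathcal Y_0)(\mathcal Y_1+\mathcal Y_0)$. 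The remaining slips are cosmetic and harmless since $u_m$ is fixed: $P$ also contains the quadratic term $(3\alpha_\eta V-3u_m^2)\phi^2$ (your bound on $\partial_V P$ already covers it), the Jacobian mismatch $|L-3u_m|\le C\varepsilon_f u_m$ should be charged explicitly in the zeroth-order term, and a few powers of $u_m$ in intermediate bounds are off.
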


\begin{proof}
We compare the two functionals in \eqref{eq:shock-profile-functional-perturbation} by expressing both in terms of the same function \(\Phi\) on \((0,1)\), using the coordinates introduced in \eqref{eq:perturbed-shock-coordinate} and \eqref{eq:reference-shock-pullback}. Set
\begin{equation*}
\ell:=\frac{L}{3u_m},\qquad \vartheta(y):=\frac{u^S(\xi(y))}{u_m},\qquad \beta(y):=b(u^S(\xi(y))),\qquad \vartheta_0(y):=3y-2.
\end{equation*}
From \eqref{eq:perturbed-shock-coordinate} and \eqref{eq:perturbed-shock-ode}, we have
\begin{equation}\label{eq:perturbed-profile-coordinate-jacobian}
y_\xi=L^2y(1-y)^2\beta(y),\qquad u^S_\xi\,d\xi=L\,dy.
\end{equation}
Moreover, \eqref{eq:perturbed-left-state} and \eqref{eq:perturbed-factor-bound} give
\begin{equation}\label{eq:perturbed-profile-coordinate-closeness}
\ell=1+O(\varepsilon_f),\qquad \vartheta=\vartheta_0+O(\varepsilon_f),\qquad \beta=1+O(\varepsilon_f)
\end{equation}
uniformly on \((0,1)\).

To express the two functionals in their respective profile coordinates, we first write \eqref{eq:perturbed-shock-block} explicitly for \(f_0(u)=u^3\):
\begin{equation}\label{eq:explicit-cubic-shock-block}
\begin{split}
\mathcal D_{S,f_0}^{\eta}[V;\psi] & = \int_{\mathbb R}(\alpha_\eta-V)\psi_\xi^2 \, d\xi +\int_{\mathbb R}V_\xi \left[ \left(3\alpha_\eta V-3u_m^2\right)\psi^2 +(\alpha_\eta+V)\psi^3 +\frac34\psi^4 \right] \, d\xi \\
& \quad +\frac5{u_m}\left(\mathcal Y_S^\eta[V;\psi]\right)^2.
\end{split}
\end{equation}
Set $\widehat\alpha_\eta:=\alpha_\eta/u_m=5-\eta$ and
\begin{equation*}
A:=\int_0^1 \left[ (\widehat\alpha_\eta-\vartheta)\Phi+\frac12\Phi^2 \right] \, dy,
\qquad A_0:=\int_0^1 \left[ (\widehat\alpha_\eta-\vartheta_0)\Phi+\frac12\Phi^2 \right] \, dy.
\end{equation*}
We apply the formula \eqref{eq:explicit-cubic-shock-block} with \((V,\psi)=(u^S,\phi)\). Using \eqref{eq:perturbed-profile-coordinate-jacobian}, we obtain
\begin{equation}\label{eq:perturbed-cubic-block-coordinate}
\begin{split}
\frac{\mathcal D_{S,f_0}^{\eta}[u^S;\phi]}{9u_m^5} & = \int_0^1 \ell^2(\widehat\alpha_\eta-\vartheta)\beta y(1-y)^2\Phi_y^2 \, dy\\
& \quad +\int_0^1 \left[ \ell(\widehat\alpha_\eta\vartheta-1)\Phi^2 +\frac{\ell}{3}(\widehat\alpha_\eta+\vartheta)\Phi^3 +\frac{\ell}{4}\Phi^4 \right] \, dy +5\ell^2A^2.
\end{split}
\end{equation}
For the reference profile $U^S$, the definition of \(\phi^U\) in \eqref{eq:reference-shock-pullback} ensures that the same function \(\Phi\) appears in the coordinate \(y_0\). Thus, using \eqref{eq:profile-jacobian} and renaming \(y_0\) as \(y\) in the integrals, we similarly obtain
\begin{equation}\label{eq:reference-cubic-block-coordinate}
\begin{split}
\frac{\mathcal D_{S,f_0}^{\eta}[U^S;\phi^U]}{9u_m^5} & = \int_0^1 (\widehat\alpha_\eta-\vartheta_0) y(1-y)^2\Phi_y^2 \, dy\\
& \quad +\int_0^1 \left[ (\widehat\alpha_\eta\vartheta_0-1)\Phi^2 +\frac13(\widehat\alpha_\eta+\vartheta_0)\Phi^3 +\frac14\Phi^4 \right] \, dy +5A_0^2.
\end{split}
\end{equation}

Subtracting \eqref{eq:reference-cubic-block-coordinate} from
\eqref{eq:perturbed-cubic-block-coordinate}, we find
\begin{equation}\label{eq:shock-profile-coordinate-difference}
\begin{split}
& \frac{ \mathcal D_{S,f_0}^{\eta}[u^S;\phi] -\mathcal D_{S,f_0}^{\eta}[U^S;\phi^U] }{9u_m^5} \\
& \quad = \int_0^1 \left[ \ell^2(\widehat\alpha_\eta-\vartheta)\beta -(\widehat\alpha_\eta-\vartheta_0) \right] y(1-y)^2\Phi_y^2 \, dy  +\int_0^1 \left[ \ell(\widehat\alpha_\eta\vartheta-1) -(\widehat\alpha_\eta\vartheta_0-1) \right]\Phi^2\,dy\\
& \qquad +\frac13\int_0^1 \left[ \ell(\widehat\alpha_\eta+\vartheta) -(\widehat\alpha_\eta+\vartheta_0) \right]\Phi^3 \, dy  +\frac14(\ell-1)\int_0^1\Phi^4\,dy +5\left(\ell^2A^2-A_0^2\right).
\end{split}
\end{equation}
We first estimate the four coefficient differences in \eqref{eq:shock-profile-coordinate-difference}. Since \(\widehat\alpha_\eta\) is uniformly bounded for \(0\leq\eta\leq\eta_0\), the estimates \eqref{eq:perturbed-profile-coordinate-closeness} give
\begin{equation} \label{eq:shock-profile-coefficient-difference-bound}
\begin{split}
&\left| \ell^2(\widehat\alpha_\eta-\vartheta)\beta -(\widehat\alpha_\eta-\vartheta_0) \right| +\left| \ell(\widehat\alpha_\eta\vartheta-1) -(\widehat\alpha_\eta\vartheta_0-1) \right|\\
&\quad +\left| \ell(\widehat\alpha_\eta+\vartheta) -(\widehat\alpha_\eta+\vartheta_0) \right| +|\ell-1| \leq C\varepsilon_f.
\end{split}
\end{equation}
We next estimate the last term in \eqref{eq:shock-profile-coordinate-difference}. We decompose
\begin{equation*}
\ell^2A^2-A_0^2 =(\ell^2-1)A^2+(A-A_0)(A+A_0).
\end{equation*}
By \eqref{eq:perturbed-profile-coordinate-closeness} and the Cauchy--Schwarz inequality, we have
\begin{equation*}
|A-A_0| = \left| \int_0^1(\vartheta_0-\vartheta)\Phi\,dy \right| \leq C\varepsilon_f \left(\int_0^1\Phi^2\,dy\right)^{1/2}.
\end{equation*}
Since \(\widehat\alpha_\eta\), \(\vartheta\), and \(\vartheta_0\) are uniformly bounded, the Cauchy--Schwarz inequality gives
\begin{equation*}
\begin{split}
|A|^2+|A_0|^2 &\leq \int_0^1\left[(\widehat\alpha_\eta-\vartheta)\Phi+\frac12\Phi^2\right]^2\,dy+\int_0^1\left[(\widehat\alpha_\eta-\vartheta_0)\Phi+\frac12\Phi^2\right]^2\,dy\\
&\leq C\int_0^1(\Phi^2+\Phi^4)\,dy,
\end{split}
\end{equation*}
where, in the second inequality, we used $(a+b)^2 \leq 2a^2 + 2b^2$. Together with \(|\ell^2-1|\leq C\varepsilon_f\), these estimates give
\begin{equation}\label{eq:shock-profile-shift-difference-bound}
\left|\ell^2A^2-A_0^2\right| \leq C\varepsilon_f \int_0^1(\Phi^2+\Phi^4)\,dy.
\end{equation}
Substituting \eqref{eq:shock-profile-coefficient-difference-bound} and \eqref{eq:shock-profile-shift-difference-bound} into \eqref{eq:shock-profile-coordinate-difference}, and using \(|\Phi|^3\leq(\Phi^2+\Phi^4)/2\), we obtain
\begin{equation}\label{eq:shock-profile-coordinate-estimate}
\left| \mathcal D_{S,f_0}^{\eta}[u^S;\phi] -\mathcal D_{S,f_0}^{\eta}[U^S;\phi^U] \right| \leq C\varepsilon_fu_m^5 \left( \int_0^1y(1-y)^2\Phi_y^2\,dy +\int_0^1(\Phi^2+\Phi^4)\,dy \right).
\end{equation}
Finally, \eqref{eq:perturbed-profile-coordinate-jacobian} gives
\begin{equation}\label{eq:shock-profile-coordinate-norms}
\begin{aligned}
u_m\int_{\mathbb R}\phi_\xi^2\,d\xi &= 9\ell^2u_m^5 \int_0^1\beta\,y(1-y)^2\Phi_y^2\,dy,\\
\int_{\mathbb R}u^S_\xi \left(u_m^2\phi^2+\phi^4\right) \, d\xi &= 3\ell u_m^5 \int_0^1(\Phi^2+\Phi^4)\,dy.
\end{aligned}
\end{equation}
By \eqref{eq:perturbed-profile-coordinate-closeness}, \(\ell\) and \(\beta\) are uniformly bounded above and below. Hence, \eqref{eq:shock-profile-coordinate-estimate} and \eqref{eq:shock-profile-coordinate-norms} yield \eqref{eq:shock-profile-functional-perturbation}.
\end{proof}

\subsubsection{Perturbation of the flux}

We next keep the profiles fixed and estimate the effect of replacing \(f_0\) by \(f\), including perturbative estimates for $\mathcal{A}_f^\eta$ needed in the remainder estimate.

\begin{lemma}\label{lem:shock-rarefaction-functional-perturbation}
Under the assumptions in Proposition~\ref{prop:large-L2-apriori}, there exists a constant \(C>0\), independent of \(f\), such that
\begin{align}
&\left|\mathcal D_{S,f}^{\eta}[u^S;\phi]-\mathcal D_{S,f_0}^{\eta}[u^S;\phi]\right|
\leq C\varepsilon_f \int_{\mathbb R}u^S_\xi\left(u_m^2\phi^2+\phi^4\right) \, d\xi,\label{eq:shock-functional-perturbation}\\
&\left|\mathcal D_{R,f}^{\eta}[u^R;\phi]-\mathcal D_{R,f_0}^{\eta}[u^R;\phi]\right|
\leq C\varepsilon_f \int_{\mathbb R}u^R_\xi\left(u_m^2\phi^2+\phi^4\right) \, d\xi \label{eq:rarefaction-functional-perturbation}
\end{align}
for all \(t\in[0,T]\), and
\begin{align}
&\left|\bigl[\mathcal A_f^\eta(\widetilde u,\phi)-\mathcal A_{f_0}^\eta(\widetilde u,\phi)\bigr]-\bigl[\mathcal A_f^\eta(u^S,\phi)-\mathcal A_{f_0}^\eta(u^S,\phi)\bigr]\right|
\leq C\varepsilon_f (u_m^2\phi^2+\phi^4),\label{eq:q-shock-side-mixed-symbol}\\
&\left|\bigl[\mathcal A_f^\eta(\widetilde u,\phi)-\mathcal A_{f_0}^\eta(\widetilde u,\phi)\bigr]-\bigl[\mathcal A_f^\eta(u^R,\phi)-\mathcal A_{f_0}^\eta(u^R,\phi)\bigr]\right|
\leq C\varepsilon_f (u_m^2\phi^2+\phi^4)\label{eq:q-rarefaction-side-mixed-symbol}
\end{align}
for all \((t,\xi)\in[0,T]\times\mathbb R\).
\end{lemma}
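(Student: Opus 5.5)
The plan is to exploit the fact that every functional involved is linear in the flux, so that all differences reduce to the same objects with \(g=q=f-f_0\):
\[
\mathcal B_f^\eta-\mathcal B_{f_0}^\eta=\mathcal B_q^\eta,\qquad \mathcal A_f^\eta-\mathcal A_{f_0}^\eta=\mathcal A_q^\eta .
\]
The shift functional \(\mathcal Y_S^\eta\) does not involve the flux, so the squared shift terms cancel exactly. Hence
\[
\mathcal D_{S,f}^\eta[u^S;\phi]-\mathcal D_{S,f_0}^\eta[u^S;\phi]=\int_{\mathbb R}u^S_\xi\Bigl[\mathcal B_q^\eta(u^S,\phi)+\tfrac12\bigl(q'(u^S)-2q'(u_m)\bigr)\phi^2\Bigr]d\xi,
\]
and similarly
\[
\mathcal D_{R,f}^\eta-\mathcal D_{R,f_0}^\eta=\int u^R_\xi\Bigl[\mathcal B_q^\eta(u^R,\phi)-\tfrac12 q'(u^R)\phi^2\Bigr]d\xi .
\]
The task therefore reduces to pointwise bounds on these symbols of size \(C\varepsilon_f(u_m^2\phi^2+\phi^4)\), valid for \(v\) in the bounded range \([u_-,u_+]\subset[-3u_m,2u_m]\).

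\emph{Step 1: Taylor expansion of \(\mathcal B_q^\eta\).} Expanding \(\mathcal B_q^\eta(v,z)\) in \(z\), the affine part of \(q\) drops out. Indeed, for \(q\) affine,
\[
(\alpha-v+z)q'z-\tfrac12 q'z^2-(\alpha-v)q'z=\tfrac12 q'z^2 ,
\]
so \(\mathcal B_q^\eta-\tfrac12 q'(v)z^2\) depends only on \(q''\) and \(q'''\). Concretely, using \(q(v+s)-q(v)=q'(v)s+\int_0^s(s-r)q''(v+r)\,dr\), one gets
\[
\mathcal A_q^\eta(v,z)=\mathcal B_q^\eta(v,z)+\tfrac12 q'(v)z^2=q'(v)z^2+(\alpha_\eta-v+z)\,R_2(v,z)-\int_0^z R_2(v,s)\,ds,
\]
where \(R_2(v,s)=\int_0^s(s-r)q''(v+r)\,dr\).

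The term \(q'(v)z^2\) is not yet controlled, since \(\varepsilon_f\) does not see \(q'\). It must therefore be paired with the explicit quadratic terms. In the shock block the combination is
\[
\mathcal B_q+\tfrac12(q'(v)-2q'(u_m))z^2=\mathcal A_q-q'(u_m)z^2,
\]
whose linear-in-\(q'\) part is \((q'(v)-q'(u_m))z^2\). Since \(|q''(w)|\le\varepsilon_f(u_m+|w|)\le C\varepsilon_f u_m\) for \(|w|\le 3u_m\), this is bounded by \(C\varepsilon_f u_m^2 z^2\). In the rarefaction block the combination is \(\mathcal B_q-\tfrac12 q'(v)z^2=\mathcal A_q-q'(v)z^2\), and there the \(q'\) term cancels exactly.

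\emph{Step 2: bounding the remainder.} This is the main point to check. We have \(|q''(v+r)|\le\varepsilon_f(u_m+|v|+|r|)\le C\varepsilon_f(u_m+|z|)\), so \(|R_2(v,s)|\le C\varepsilon_f(u_m+|z|)s^2\). Therefore
\[
\bigl|(\alpha_\eta-v+z)R_2\bigr|+\Bigl|\int_0^z R_2\Bigr|\le C\varepsilon_f(u_m+|z|)^2z^2\le C\varepsilon_f(u_m^2z^2+z^4).
\]
The growth of \(q''\) is exactly what the weight \(u_m+|u|\) in \eqref{eq:flux-perturbation-size} allows, and the result matches the \(\phi^4\) order. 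Integrating against \(u^S_\xi\ge0\) and \(u^R_\xi\ge0\) gives \eqref{eq:shock-functional-perturbation} and \eqref{eq:rarefaction-functional-perturbation}.

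\emph{Step 3: the mixed estimates.} For \eqref{eq:q-shock-side-mixed-symbol} and \eqref{eq:q-rarefaction-side-mixed-symbol}, write the expression as \(\mathcal A_q^\eta(\widetilde u,\phi)-\mathcal A_q^\eta(V,\phi)\) with \(V\in\{u^S,u^R\}\). The difference of the \(q'\) parts is
\[
(q'(\widetilde u)-q'(V))\phi^2,
\]
which is bounded by \(C\varepsilon_f u_m|\widetilde u-V|\phi^2\le C\varepsilon_f u_m^2\phi^2\), because \(|\widetilde u-V|\le Cu_m\) (either \(u^R-u_m\) or \(u^S-u_m\)). The remaining \(R_2\)-type terms at \(v=\widetilde u\) and at \(v=V\) are each bounded separately by Step 2, since both arguments lie in \([u_-,u_+]\). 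All constants depend only on \(u_m\) and on \(\eta\le\eta_0\), hence are independent of \(f\).
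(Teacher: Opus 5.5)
Your proof is correct and follows essentially the paper's route: linearity of $\mathcal B_g^\eta$ and $\mathcal A_g^\eta$ in the flux, a second-order Taylor expansion of $q=f-f_0$ that isolates the $q'(v)z^2$ term, and the growth bound $|q''(w)|\le\varepsilon_f(u_m+|w|)$ applied to the remainder. As in the paper, that $q'$ term cancels in the rarefaction block and pairs into $(q'(u^S)-q'(u_m))\phi^2$ in the shock block. The only minor deviation is in the mixed estimates: you bound the remainder terms at $\widetilde u$ and at $V$ separately, whereas the paper differences $q''(\widetilde u+\theta\phi)-q''(V+\theta\phi)$ using $\lVert q'''\rVert_{L^\infty}\le\varepsilon_f$; your version suffices for the stated bound and shows that the $q'''$ part of $\varepsilon_f$ is not needed for this lemma.
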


\begin{proof}
From the definitions of $\mathcal D_{S,f}^{\eta}$, $\mathcal D_{R,f}^{\eta}$, $\mathcal D_{S,f_0}^{\eta}$, and $\mathcal D_{R,f_0}^{\eta}$, we have
\begin{equation}\label{eq:DS-exact-difference}
\begin{split}
& \mathcal D_{S,f}^{\eta}[u^S;\phi]-\mathcal D_{S,f_0}^{\eta}[u^S;\phi] \\
& \quad = \int_{\mathbb R}u^S_\xi\Bigl[\mathcal B_f^\eta(u^S,\phi)-\mathcal B_{f_0}^\eta(u^S,\phi) + \frac12\bigl(f'(u^S)-f_0'(u^S)-2f'(u_m)+2f_0'(u_m)\bigr)\phi^2 \Bigr]\,d\xi
\end{split}
\end{equation}
and
\begin{equation}\label{eq:DR-exact-difference}
\mathcal D_{R,f}^{\eta}[u^R;\phi]-\mathcal D_{R,f_0}^{\eta}[u^R;\phi] =\int_{\mathbb R}u^R_\xi\left[ \mathcal B_f^\eta(u^R,\phi)-\mathcal B_{f_0}^\eta(u^R,\phi) -\frac12\bigl(f'(u^R)-f_0'(u^R)\bigr)\phi^2 \right]\,d\xi.
\end{equation}
We now estimate the quantities in brackets in \eqref{eq:DS-exact-difference} and \eqref{eq:DR-exact-difference} by rewriting them in terms of the flux perturbation \(q:=f-f_0\).

We recall that
\begin{equation*}
\mathcal B_f^\eta(v,z) =(\alpha_\eta-v+z)\bigl(f(v+z)-f(v)\bigr)-\int_0^z\bigl(f(v+s)-f(v)\bigr)\,ds-(\alpha_\eta-v)f'(v)z.
\end{equation*}
Since \(\mathcal B_g^\eta\) is linear with respect to \(g\), we have
\begin{equation*}
\mathcal B_f^\eta(v,z)-\mathcal B_{f_0}^\eta(v,z)=\mathcal B_q^\eta(v,z).
\end{equation*}
By Taylor's formula,
\begin{equation*}
q(v+z)-q(v)=q'(v)z+\int_0^z(z-s)q''(v+s)\,ds
\end{equation*}
and, by integration by parts,
\begin{equation*}
\int_0^z\bigl(q(v+s)-q(v)\bigr)\,ds =\frac12q'(v)z^2+\frac12\int_0^z(z-s)^2q''(v+s)\,ds.
\end{equation*}
Substituting these identities into the definition of \(\mathcal B_q^\eta\), we find
\begin{equation}\label{eq:Bg-representation}
\begin{split}
\mathcal B_q^\eta(v,z) &=\frac12 q'(v)z^2 + \int_0^z \left( (\alpha_\eta-v+z) (z-s) - \frac12 (z-s)^2 \right) q''(v+s)\,ds \\
&=\frac12 q'(v)z^2 +z^2\int_0^1(1-\theta)\left(\alpha_\eta-v+\frac{1+\theta}{2}z\right)q''(v+\theta z)\, d\theta,
\end{split}
\end{equation}
where the second equality follows from the change of variables $s=\theta z$ and the identity
\begin{equation*}
(\alpha_\eta-v+z)(z-s)-\frac12(z-s)^2=(z-s)\left(\alpha_\eta-v+\frac{z+s}{2}\right).
\end{equation*}

Using \eqref{eq:Bg-representation}, the bracket in \eqref{eq:DS-exact-difference} becomes
\begin{equation}\label{eq:shock-bracket-identity}
\begin{split}
&\mathcal B_f^\eta(u^S,\phi)-\mathcal B_{f_0}^\eta(u^S,\phi) +\frac12\bigl(f'(u^S)-f_0'(u^S)-2f'(u_m)+2f_0'(u_m)\bigr)\phi^2\\
&\quad =\bigl(q'(u^S)-q'(u_m)\bigr)\phi^2 +\phi^2\int_0^1(1-\theta) \left(\alpha_\eta-u^S+\frac{1+\theta}{2}\phi\right) q''(u^S+\theta\phi)\,d\theta.
\end{split}
\end{equation}
On the other hand, the \(q'(u^R)\phi^2/2\) term cancels in the rarefaction part, and hence
\begin{equation}\label{eq:rarefaction-bracket-identity}
\begin{split}
&\mathcal B_f^\eta(u^R,\phi)-\mathcal B_{f_0}^\eta(u^R,\phi) -\frac12\bigl(f'(u^R)-f_0'(u^R)\bigr)\phi^2\\
&\quad =\phi^2\int_0^1(1-\theta) \left(\alpha_\eta-u^R+\frac{1+\theta}{2}\phi\right) q''(u^R+\theta\phi)\,d\theta.
\end{split}
\end{equation}

By \eqref{eq:flux-perturbation-size},
\begin{equation} \label{q''bound}
|q''(w)|\leq\varepsilon_f(u_m+|w|), \quad \text{for all } w \in \mathbb{R}.
\end{equation}
This, together with the bounds $|u^S|+|u^R| \leq Cu_m$, yields
\begin{equation}\label{eq:q-second-derivative-along-phi}
\begin{split}
|q''(u^S+\theta\phi)|+|q''(u^R+\theta\phi)|
&\leq\varepsilon_f\bigl(2u_m+|u^S+\theta\phi|+|u^R+\theta\phi|\bigr)\\
&\leq C\varepsilon_f(u_m+|\phi|).
\end{split}
\end{equation}
Moreover, since \(|\alpha_\eta-u^S|+|\alpha_\eta-u^R|\leq Cu_m\),
\begin{equation}\label{eq:bracket-coefficient-bound}
\left|\alpha_\eta-u^S+\frac{1+\theta}{2}\phi\right|+\left|\alpha_\eta-u^R+\frac{1+\theta}{2}\phi\right|\leq C(u_m+|\phi|).
\end{equation}
In addition, the first term on the right-hand side of \eqref{eq:shock-bracket-identity} satisfies
\begin{equation}\label{eq:q-prime-shock-bound}
\begin{split}
|q'(u^S)-q'(u_m)|
&\leq |u^S-u_m|\int_0^1\left|q''\bigl(u_m+\theta(u^S-u_m)\bigr)\right|\,d\theta\\
&\leq C\varepsilon_fu_m|u^S-u_m| \leq C\varepsilon_fu_m^2.
\end{split}
\end{equation}

Using \eqref{eq:q-second-derivative-along-phi}--\eqref{eq:q-prime-shock-bound} in \eqref{eq:shock-bracket-identity} and applying Young's inequality, we obtain
\begin{equation}\label{eq:shock-bracket-bound}
\begin{split}
&\left|\mathcal B_f^\eta(u^S,\phi)-\mathcal B_{f_0}^\eta(u^S,\phi)+\frac12\bigl(f'(u^S)-f_0'(u^S)-2f'(u_m)+2f_0'(u_m)\bigr)\phi^2\right|\\
&\quad\leq C\varepsilon_f\left(u_m^2\phi^2+u_m|\phi|^3+\phi^4\right) \leq C\varepsilon_f\left(u_m^2\phi^2+\phi^4\right).
\end{split}
\end{equation}
Similarly, applying \eqref{eq:q-second-derivative-along-phi} and \eqref{eq:bracket-coefficient-bound} to \eqref{eq:rarefaction-bracket-identity}, we obtain
\begin{equation}\label{eq:rarefaction-bracket-bound}
\left|\mathcal B_f^\eta(u^R,\phi)-\mathcal B_{f_0}^\eta(u^R,\phi)-\frac12\bigl(f'(u^R)-f_0'(u^R)\bigr)\phi^2\right| \leq C\varepsilon_f\left(u_m^2\phi^2+\phi^4\right).
\end{equation}
Substituting \eqref{eq:shock-bracket-bound} and \eqref{eq:rarefaction-bracket-bound} into \eqref{eq:DS-exact-difference} and \eqref{eq:DR-exact-difference}, respectively, proves \eqref{eq:shock-functional-perturbation} and \eqref{eq:rarefaction-functional-perturbation}.

We now prove \eqref{eq:q-shock-side-mixed-symbol} and \eqref{eq:q-rarefaction-side-mixed-symbol}. From the definition \eqref{eq:general-flux-A-form} and the representation \eqref{eq:Bg-representation}, we have
\begin{equation*}
\mathcal A_f^\eta(v,z)-\mathcal A_{f_0}^\eta(v,z) = \mathcal A_q^\eta(v,z)=q'(v)z^2+z^2\int_0^1(1-\theta)\left(\alpha_\eta-v+\frac{1+\theta}{2}z\right)q''(v+\theta z)\,d\theta.
\end{equation*}
Hence
\begin{equation}\label{eq:Aq-base-point-difference}
\begin{split}
\mathcal A_q^\eta(\widetilde u,\phi)-\mathcal A_q^\eta(u^S,\phi) & =\bigl(q'(\widetilde u)-q'(u^S)\bigr)\phi^2 + (u^S-\widetilde u)\phi^2 \int_0^1(1-\theta)q''(\widetilde u+\theta \phi)\,d\theta\\
&\quad + \phi^2\int_0^1(1-\theta) \left(\alpha_\eta-u^S+\frac{1+\theta}{2}\phi\right) \bigl(q''(\widetilde u+\theta \phi)-q''(u^S+\theta \phi)\bigr)\,d\theta.
\end{split}
\end{equation}
Since \(|\widetilde u-u^S|=|u^R-u_m|\leq\delta_R\leq\delta_0u_m\), \eqref{q''bound} and \(\lVert q'''\rVert_{L^\infty}\leq\varepsilon_f\) give
\begin{equation*}
\begin{split}
& |q'(\widetilde u)-q'(u^S)| \leq |\widetilde u - u^S| \int_0^1 \left|q''\bigl(u^S + s(\widetilde u - u^S)\bigr)\right|\,ds \leq C\varepsilon_f u_m^2,\\
& |q''(\widetilde u+\theta \phi)| \leq C\varepsilon_f(u_m+|\phi|),\\
& |q''(\widetilde u+\theta \phi)-q''(u^S+\theta \phi)| \leq |\widetilde u - u^S| \lVert q'''\rVert_{L^\infty}\leq C \varepsilon_f u_m.
\end{split}
\end{equation*}
Substituting these estimates and the bound for $u^S$ in \eqref{eq:bracket-coefficient-bound} into \eqref{eq:Aq-base-point-difference}, and applying Young's inequality, we obtain
\begin{equation*}
|\mathcal A_q^\eta(\widetilde u,\phi)-\mathcal A_q^\eta(u^S,\phi)|\leq C\varepsilon_f \left(u_m^2 \phi^2 + \phi^4\right).
\end{equation*}
This proves \eqref{eq:q-shock-side-mixed-symbol}.

It remains to prove \eqref{eq:q-rarefaction-side-mixed-symbol}. Repeating the calculation leading to \eqref{eq:Aq-base-point-difference} with \(u^R\) in place of \(u^S\), and using \(|\widetilde u-u^R|=|u^S-u_m|\), \eqref{q''bound}, and \(\lVert q'''\rVert_{L^\infty}\leq\varepsilon_f\), we obtain
\begin{equation*}
|q'(\widetilde u)-q'(u^R)| \leq C\varepsilon_fu_m^2, \qquad |q''(\widetilde u+\theta\phi)-q''(u^R+\theta\phi)| \leq C \varepsilon_f u_m.
\end{equation*}
Together with \(|q''(\widetilde u+\theta\phi)|\leq C\varepsilon_f(u_m+|\phi|)\) and \eqref{eq:bracket-coefficient-bound}, these estimates yield
\begin{equation*}
|\mathcal A_q^\eta(\widetilde u,\phi)-\mathcal A_q^\eta(u^R,\phi)|
\leq C\varepsilon_f \left(u_m^2\phi^2 + \phi^4\right),
\end{equation*}
which proves \eqref{eq:q-rarefaction-side-mixed-symbol}.
\end{proof}

\subsection{Remaining terms and coercivity of \texorpdfstring{\(\mathcal G_f\)}{Gf}}

We finally close the coercivity estimate for $\mathcal{G}_f$ by estimating the remaining terms and combining all the preceding bounds.

\subsubsection{Estimate of the remaining terms}

\begin{lemma}\label{lem:remaining-good-terms}
Under the assumptions in Proposition~\ref{prop:large-L2-apriori}, there exists a constant \(C>0\) such that
\begin{equation}\label{eq:remaining-good-term-estimate}
\begin{split}
&\mathcal G_f-\mathcal D_{S,f}^{\eta}[u^S;\phi]-\mathcal D_{R,f}^{\eta}[u^R;\phi]\\
&\quad\geq\frac1{2\kappa^2}\left(\kappa-\frac5{u_m}\right)|\dot X|^2 -C\left(\frac{\delta_R}{u_m}+\varepsilon_f\right) \left(u_m\int_{\mathbb R}\phi_\xi^2\,d\xi+\int_{\mathbb R}\widetilde u_\xi(u_m^2\phi^2+\phi^4) \, d\xi\right)
\end{split}
\end{equation}
for all \(t\in[0,T]\).
\end{lemma}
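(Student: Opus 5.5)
By the definition of \(\mathcal G_f\) in \eqref{eq:general-good-interaction-remainder}, the left-hand side of \eqref{eq:remaining-good-term-estimate} consists of three groups of terms:
\begin{equation*}
-\int_{\mathbb R}(u^R-u_m)\phi_\xi^2\,d\xi,\qquad
\kappa(\mathcal Y_S+\mathcal Z)^2-\frac5{u_m}\mathcal Y_S^2,\qquad
\int_{\mathbb R}u^S_\xi\bigl(\mathcal A_f^\eta(\widetilde u,\phi)-\mathcal A_f^\eta(u^S,\phi)\bigr)\,d\xi,
\end{equation*}
where \(\mathcal Y_S:=\mathcal Y_S^\eta[u^S;\phi]\). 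I treat the three groups separately.

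\emph{First group.} Since \(0<u^R-u_m<\delta_R\), the first term is bounded below by \(-(\delta_R/u_m)\,u_m\int\phi_\xi^2\,d\xi\), and nothing more is needed.

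\emph{Second group.} I use \(\mathcal Y=\mathcal Y_S+\mathcal Z\) and \(\dot X=\kappa\mathcal Y\), so that \(\kappa\mathcal Y^2=|\dot X|^2/\kappa\). Writing \(\mathcal Y_S=\mathcal Y-\mathcal Z\) gives
\begin{equation*}
\frac5{u_m}\mathcal Y_S^2\leq\frac5{u_m}(1+\gamma)\mathcal Y^2+\frac5{u_m}\bigl(1+\gamma^{-1}\bigr)\mathcal Z^2 .
\end{equation*}
I choose \(\gamma>0\) so that \(\frac5{u_m}(1+\gamma)=\frac12\bigl(\kappa+\frac5{u_m}\bigr)\); this is possible because \(\kappa>5/u_m\). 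Then
\begin{equation*}
\kappa\mathcal Y^2-\frac5{u_m}\mathcal Y_S^2\geq\frac12\Bigl(\kappa-\frac5{u_m}\Bigr)\mathcal Y^2-C\mathcal Z^2
=\frac1{2\kappa^2}\Bigl(\kappa-\frac5{u_m}\Bigr)|\dot X|^2-C\mathcal Z^2 .
\end{equation*}
It remains to show \(\mathcal Z^2\lesssim(\delta_R/u_m)\,u_m^{-1}\int\widetilde u_\xi(u_m^2\phi^2+\phi^4)\,d\xi\); the factor \(u_m^{-1}\) is harmless by homogeneity, and the constant may depend on \(u_m\), which is fixed.
\begin{itemize}
\item For the rarefaction part of \(\mathcal Z\), Cauchy--Schwarz with \(\int u^R_\xi\,d\xi=\delta_R\) gives \(\bigl(\int u^R_\xi(a_\eta\phi+\tfrac12\phi^2)\,d\xi\bigr)^2\leq C\delta_R\int u^R_\xi(u_m^2\phi^2+\phi^4)\,d\xi\).
\item For the interaction part, Cauchy--Schwarz gives \(\bigl(\int(u^R-u_m)u^S_\xi\phi\,d\xi\bigr)^2\leq\bigl(\int(u^R-u_m)^2u^S_\xi\,d\xi\bigr)\int u^S_\xi\phi^2\,d\xi\leq\delta_R^2L\int u^S_\xi\phi^2\,d\xi\), which is of order \(\delta_R u_m\int u^S_\xi\phi^2\,d\xi\) up to the small factor \(\delta_R/u_m\).
\end{itemize}
Since \(u^S_\xi,u^R_\xi\leq\widetilde u_\xi\), both bounds have the required form.

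\emph{Third group.} I split \(\mathcal A_f^\eta=\mathcal A_{f_0}^\eta+\mathcal A_q^\eta\). The \(q\)-part is exactly the quantity controlled by \eqref{eq:q-shock-side-mixed-symbol}, so its contribution is \(\geq -C\varepsilon_f\int u^S_\xi(u_m^2\phi^2+\phi^4)\,d\xi\). For the cubic part, I compute \(\mathcal A_{f_0}^\eta(v,z)\) explicitly; from \eqref{eq:explicit-cubic-shock-block} it is a polynomial whose \(v\)-dependence is
\begin{equation*}
\Bigl(3\alpha_\eta v-\tfrac32v^2\Bigr)z^2+(\alpha_\eta+v)z^3+\text{(terms independent of }v),
\end{equation*}
up to terms like \(\tfrac32v^2z^2\) from \(\tfrac12g'(v)z^2\); whatever the exact form, the coefficients are polynomials in \(v\) with \(v\)-derivative of size \(O(u_m)\) on the bounded range of \(\widetilde u\) and \(u^S\). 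The mean value theorem in \(v\) then gives
\begin{equation*}
\bigl|\mathcal A_{f_0}^\eta(\widetilde u,\phi)-\mathcal A_{f_0}^\eta(u^S,\phi)\bigr|\leq C|u^R-u_m|\bigl(u_m\phi^2+|\phi|^3\bigr)\leq C\frac{\delta_R}{u_m}\bigl(u_m^2\phi^2+\phi^4\bigr),
\end{equation*}
using \(|u^R-u_m|\leq\delta_R\) and Young's inequality.

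Summing the three groups gives \eqref{eq:remaining-good-term-estimate}. The only delicate point is the bookkeeping in the second group: the constant \(\gamma\) must be chosen so that exactly \(\frac1{2\kappa^2}(\kappa-\frac5{u_m})|\dot X|^2\) survives, and the estimate on \(\mathcal Z^2\) must carry the small factor \(\delta_R/u_m\).
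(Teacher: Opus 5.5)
Your proposal is correct and follows the paper's proof essentially step for step: the same split into the $\phi_\xi^2$ term, the shift terms and the $\mathcal A$-difference, a Young's inequality whose $\gamma$ reproduces exactly the bound \eqref{YsZ}, the same Cauchy--Schwarz estimates on the two parts of $\mathcal Z$, and \eqref{eq:q-shock-side-mixed-symbol} for the $q$-part. The one cosmetic difference is in the cubic term. The paper uses the exact formula $\mathcal A_{f_0}^\eta(v,z)=3\alpha_\eta vz^2+(\alpha_\eta+v)z^3+\frac34z^4$, writes the term as $(u^R-u_m)\phi^2(3\alpha_\eta+\phi)$, and drops the nonnegative $3\alpha_\eta$ piece, whereas you bound it in absolute value via the mean value theorem, which is equally valid.
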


\begin{proof}
By \eqref{eq:general-good-interaction-remainder}, we have
\begin{equation} \label{forestimate}
\begin{split}
&\mathcal G_f-\mathcal D_{S,f}^{\eta}[u^S;\phi]-\mathcal D_{R,f}^{\eta}[u^R;\phi]\\
& \quad =-\int_{\mathbb R}(u^R-u_m)\phi_\xi^2 \, d\xi
+\int_{\mathbb R}(u^R-u_m)u^S_\xi\phi^2(3\alpha_\eta+\phi) \, d\xi\\
&\qquad+\kappa\left(\mathcal Y_S^\eta[u^S;\phi]+\mathcal Z\right)^2 -\frac5{u_m}\left(\mathcal Y_S^\eta[u^S;\phi]\right)^2\\
&\qquad+\int_{\mathbb R}u^S_\xi\Bigl(\bigl[\mathcal A_f^\eta(\widetilde u,\phi)-\mathcal A_{f_0}^\eta(\widetilde u,\phi)\bigr] -\bigl[\mathcal A_f^\eta(u^S,\phi)-\mathcal A_{f_0}^\eta(u^S,\phi)\bigr]\Bigr) \, d\xi,
\end{split}
\end{equation}
where we used the decomposition
\begin{equation*}
\begin{split}
\mathcal A_f^\eta(\widetilde u,\phi)-\mathcal A_f^\eta(u^S,\phi)
&=\mathcal A_{f_0}^\eta(\widetilde u,\phi)-\mathcal A_{f_0}^\eta(u^S,\phi)\\
&\quad+\bigl[\mathcal A_f^\eta(\widetilde u,\phi)-\mathcal A_{f_0}^\eta(\widetilde u,\phi)\bigr]-\bigl[\mathcal A_f^\eta(u^S,\phi)-\mathcal A_{f_0}^\eta(u^S,\phi)\bigr]
\end{split}
\end{equation*}
together with 
\begin{equation*}
\mathcal A_{f_0}^\eta(\widetilde u,\phi)-\mathcal A_{f_0}^\eta(u^S,\phi)=(u^R-u_m)\phi^2(3\alpha_\eta+\phi).
\end{equation*}
We estimate the terms on the right-hand side of \eqref{forestimate} separately.

Since \(0\leq u^R-u_m\leq\delta_R\) and \(\alpha_\eta>0\), using \(u_m|\phi|^3\leq(u_m^2\phi^2+\phi^4)/2\), we obtain
\begin{equation*}
\begin{split}
&-\int_{\mathbb R}(u^R-u_m)\phi_\xi^2\,d\xi+\int_{\mathbb R}(u^R-u_m)u^S_\xi\phi^2(3\alpha_\eta+\phi)\,d\xi\\
&\quad\geq-\frac{\delta_R}{u_m}\left(u_m\int_{\mathbb R}\phi_\xi^2\,d\xi\right)-\frac{\delta_R}{2u_m}\int_{\mathbb R}u^S_\xi(u_m^2\phi^2+\phi^4)\,d\xi.
\end{split}
\end{equation*}
For the third and fourth terms, we use the identity \(\mathcal Y_S^\eta[u^S;\phi]=\mathcal Y-\mathcal Z\), the shift ODE \eqref{eq:shift-ode}, and Young's inequality to obtain
\begin{equation}\label{YsZ}
\begin{split}
\kappa\mathcal Y^2-\frac5{u_m}(\mathcal Y-\mathcal Z)^2 & = \left( \kappa - \frac{5}{u_m} \right) \mathcal{Y}^2 + \frac{10}{u_m} \mathcal{Y} \mathcal{Z} - \frac{5}{u_m} \mathcal{Z}^2 \\
& \geq\frac1{2\kappa^2}\left(\kappa-\frac5{u_m}\right)|\dot X|^2 - \frac{5\left(\kappa+\frac5{u_m}\right)}{\kappa-\frac5{u_m}} \frac{\mathcal Z^2}{u_m}.
\end{split}
\end{equation}
The last term in \eqref{YsZ} is estimated using the definition of $\mathcal{Z}$:
\begin{equation*} 
\mathcal Z=-\int_{\mathbb R}(u^R-u_m)u^S_\xi\phi\,d\xi+\int_{\mathbb R}u^R_\xi\left(a_\eta\phi+\frac12\phi^2\right) \, d\xi.
\end{equation*}
Since \(\int_{\mathbb R}u^S_\xi\,d\xi\leq Cu_m\), \(\int_{\mathbb R}u^R_\xi\,d\xi=\delta_R\), and \(|a_\eta|\leq Cu_m\), the Cauchy--Schwarz inequality gives
\begin{equation*}
\begin{split}
\frac{\mathcal Z^2}{u_m} &\leq\frac2{u_m}\left(\int_{\mathbb R}(u^R-u_m)u^S_\xi\phi\,d\xi\right)^2+\frac2{u_m}\left(\int_{\mathbb R}u^R_\xi\left(a_\eta\phi+\frac12\phi^2\right)\,d\xi\right)^2\\
&\leq C\frac{\delta_R^2}{u_m^2}\int_{\mathbb R}u^S_\xi u_m^2\phi^2\,d\xi+C\frac{\delta_R}{u_m}\int_{\mathbb R}u^R_\xi(u_m^2\phi^2+\phi^4)\,d\xi.
\end{split}
\end{equation*}
Finally, it follows directly from \eqref{eq:q-shock-side-mixed-symbol} that
\begin{equation*}
\left|\int_{\mathbb R}u^S_\xi\Bigl(\bigl[\mathcal A_f^\eta(\widetilde u,\phi)-\mathcal A_{f_0}^\eta(\widetilde u,\phi)\bigr]
-\bigl[\mathcal A_f^\eta(u^S,\phi)-\mathcal A_{f_0}^\eta(u^S,\phi)\bigr]\Bigr)\,d\xi\right|
\leq C\varepsilon_f\int_{\mathbb R}u^S_\xi\left(u_m^2\phi^2+\phi^4\right)\,d\xi.
\end{equation*}
Combining all the above estimates proves \eqref{eq:remaining-good-term-estimate}.
\end{proof}

\subsubsection{Completion of the proof of Proposition~\ref{prop:main-coercivity}}

By \eqref{eq:reference-shock-pullback}, \eqref{eq:profile-jacobian}, \eqref{eq:perturbed-profile-coordinate-closeness}, and \eqref{eq:shock-profile-coordinate-norms}, we have
\begin{equation}\label{eq:reference-perturbed-shock-norm-equivalence}
\begin{split}
&u_m\int_{\mathbb R}(\phi^U_\xi)^2\,d\xi+\int_{\mathbb R}U^S_\xi\left(u_m^2(\phi^U)^2+(\phi^U)^4\right)\,d\xi\\
&\quad\sim u_m\int_{\mathbb R}\phi_\xi^2\,d\xi+\int_{\mathbb R}u^S_\xi\left(u_m^2\phi^2+\phi^4\right)\,d\xi.
\end{split}
\end{equation}
Here, $A\sim B$ means that $cB \leq A \leq CB$ for constants $c>0$ and $C>0$. Using the decomposition \eqref{eq:shock-perturbative-decomposition}, together with the estimates \eqref{eq:reference-shock-coercivity-statement}, \eqref{eq:exact-cubic-shock-eta-error}, \eqref{eq:shock-profile-functional-perturbation}, \eqref{eq:shock-functional-perturbation}, and \eqref{eq:reference-perturbed-shock-norm-equivalence}, we obtain
\begin{equation*}
\mathcal D_{S,f}^{\eta}[u^S;\phi] \geq (c-C\eta-C\varepsilon_f)\left(u_m\int_{\mathbb R}\phi_\xi^2 \, d\xi + \int_{\mathbb R}u^S_\xi\left(u_m^2\phi^2+\phi^4\right) \, d\xi \right).
\end{equation*}
Likewise, \eqref{eq:rarefaction-perturbative-decomposition}, \eqref{eq:rarefaction-coercivity}, and \eqref{eq:rarefaction-functional-perturbation} give, for some \(c_\eta>0\),
\begin{equation*}
\mathcal D_{R,f}^{\eta}[u^R;\phi]\geq(c_\eta-C\varepsilon_f)\int_{\mathbb R}u^R_\xi\left(u_m^2\phi^2+\phi^4\right)\,d\xi.
\end{equation*}
Combining these two estimates with \eqref{eq:remaining-good-term-estimate}, we obtain
\begin{equation*}
\begin{split}
\mathcal G_f(t) &\geq \left(c-C\left(\eta+\varepsilon_f+\frac{\delta_R}{u_m}\right)\right)\left(u_m\int_{\mathbb R}\phi_\xi^2 \, d\xi + \int_{\mathbb R} u^S_\xi \left(u_m^2\phi^2+\phi^4\right) \, d\xi \right)\\
&\quad + \left(c_\eta-C\left(\varepsilon_f+\frac{\delta_R}{u_m}\right)\right)\int_{\mathbb R}u^R_\xi\left(u_m^2\phi^2+\phi^4\right) \, d\xi + \frac1{2\kappa^2}\left(\kappa-\frac5{u_m}\right) |\dot X(t)|^2.
\end{split}
\end{equation*}
We choose \(\eta_0>0\) sufficiently small and then, for each \(0<\eta<\eta_0\), choose \(\varepsilon_0>0\) and \(\delta_0>0\) sufficiently small. With these choices, the coercivity estimate \eqref{eq:perturbed-good-term} follows from \(\kappa>5/u_m\) and \(\widetilde u_\xi=u^S_\xi+u^R_\xi\).

\section{Interaction errors and completion of the a priori estimate} \label{sec:5}

This section is devoted to estimating the interaction and residual terms \(\mathcal I_f\) and \(\mathcal R\) in \eqref{eq:master-decomposition} and to completing the proof of Proposition~\ref{prop:large-L2-apriori}. We first derive the uniform wave interaction estimates and the resulting integrated bounds for the profile residual \(F\). We then estimate \(\mathcal I_f\) and \(\mathcal R\) and combine these estimates with the coercivity result of Section~\ref{sec:4} to close the a priori estimate.

\subsection{Uniform wave interaction estimates}

The degeneracy of the shock at the right state \(u_m\) gives rise to an algebraically decaying tail on the rarefaction side, while the shock and rarefaction remain attached. The resulting wave interaction estimates therefore require particular attention. However, for the cubic flux \(f_0(u)=u^3\), the two estimates needed here were established in \cite[Lemma~4.8]{HWZ}. The following lemma shows that the same estimates hold uniformly over the present admissible flux class.

\begin{lemma}[Uniform wave interaction estimates] \label{lem:HWZ-profile-interactions}
Under the assumptions in Proposition~\ref{prop:large-L2-apriori}, there exists a constant $C>0$, independent of \(f\), \(t\), and \(\delta_R\), such that
\begin{align} 
\int_{-\infty}^0|u^S-u_m|u^R_\xi\,d\xi & \leq C\delta_S\delta_R^{2/11}(1+t)^{-4/5}, \label{eq:HWZ-A-left} \\ 
\int_0^\infty|u^S-u_m|u^R_\xi\,d\xi & \leq C\delta_S^{-3/5}\delta_R^{1/5}(1+t)^{-4/5} \label{eq:HWZ-A-right} 
\end{align}
for all \(t\geq0\).
\end{lemma}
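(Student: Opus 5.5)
The plan is to reproduce the argument of \cite[Lemma~4.8]{HWZ} with every input replaced by its uniform counterpart from Section~2. The only inputs that argument uses are the shock tail bounds and the rarefaction bounds. For the shock, \eqref{eq:perturbed-shock-left-tail} and \eqref{eq:perturbed-shock-right-tail} hold with constants independent of $f$, and $\delta_S=u_m-u_-\in(5u_m/2,7u_m/2)$ by Lemma~\ref{lem:perturbed-degenerate-profile}. For the rarefaction, \eqref{eq:approx-rarefaction-derivatives}, \eqref{eq:approx-rarefaction-tails} and \eqref{eq:approx-rarefaction-refined-tails} hold uniformly by Lemma~\ref{lem:approximate-rarefaction}. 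Since $u_m$ is fixed and $\delta_S\sim u_m$, powers of $\delta_S$ only affect constants, so it suffices to obtain the stated powers of $\delta_R$ and $(1+t)$.

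For \eqref{eq:HWZ-A-left}, on $\xi\le0$ I bound $|u^S-u_m|\le\delta_S$ and estimate $\int_{-\infty}^0u^R_\xi\,d\xi=u^R(t,0)-u_m$. Applying \eqref{eq:approx-rarefaction-refined-tails} at $\xi=0$ gives $C_\theta\delta_R^{2\theta/(2+\theta)}(1+t)^{-1+\theta}$. The choice $\theta=1/5$ yields $\delta_R^{2/11}(1+t)^{-4/5}$, which is exactly the claimed bound.

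For \eqref{eq:HWZ-A-right}, I split $[0,\infty)$ at a point $\xi_*>0$ to be optimized. On $[\xi_*,\infty)$, the right tail bound gives $|u^S-u_m|\le C\delta_S/(1+c\delta_S^2\xi_*)\le C\delta_S^{-1}\xi_*^{-1}$, while $\int u^R_\xi\le\delta_R$; I expect this to be refined to $\|u^R_\xi\|_{L^1}$-type decay only where needed. On $[0,\xi_*]$, I use $|u^S-u_m|\le C\delta_S$ together with $\int_0^{\xi_*}u^R_\xi\le\xi_*\|u^R_\xi\|_{L^\infty}\le C\xi_*(1+t)^{-1}$. Alternatively, I can interpolate via the $L^p$ bounds in \eqref{eq:approx-rarefaction-derivatives}, pairing $\|u^S-u_m\|_{L^{p'}(0,\infty)}\le C\delta_S^{1-2/p'}$ (valid for $p'>1$) with $\|u^R_\xi\|_{L^p}\le C\delta_R^{1/p}(1+t)^{-1+1/p}$. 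Hölder with $p=5$ gives directly
\begin{equation*}
\int_0^\infty|u^S-u_m|u^R_\xi\,d\xi\le C\delta_S^{1-2\cdot 4/5}\,\delta_R^{1/5}(1+t)^{-4/5}=C\delta_S^{-3/5}\delta_R^{1/5}(1+t)^{-4/5}.
\end{equation*}
This matches \eqref{eq:HWZ-A-right} exactly, so I would use this Hölder route instead of the splitting. The required bound $\int_0^\infty|u^S-u_m|^{5/4}d\xi\le C\delta_S^{5/4}\int_0^\infty(1+c\delta_S^2\xi)^{-5/4}d\xi=C\delta_S^{-3/4}$ follows from \eqref{eq:perturbed-shock-right-tail}.

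The main point needing care is uniformity. The constants in the shock tail bounds come from the factorization \eqref{eq:perturbed-shock-ode} with $1/2\le b\le 3/2$, and those in the rarefaction bounds come from \eqref{g'g''bd}; both are independent of $f$, which makes the resulting $C$ uniform. The rest is bookkeeping.
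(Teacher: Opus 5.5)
Your proof is correct and follows the same route as the paper. The left bound comes from $|u^S-u_m|\le\delta_S$ together with the refined tail estimate \eqref{eq:approx-rarefaction-refined-tails} at $\theta=1/5$. The right bound comes from Hölder's inequality, pairing the algebraic shock tail \eqref{eq:perturbed-shock-right-tail} in $L^{5/4}(0,\infty)$ with the $L^5$ bound on $u^R_\xi$ from \eqref{eq:approx-rarefaction-derivatives}, and uniformity in $f$ is inherited from the $f$-independent constants of Section~2. The paper only cites \cite[Lemma~4.8]{HWZ} for these two computations, which you have written out explicitly.
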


\begin{proof}
For \(f=f_0\), the estimates \eqref{eq:HWZ-A-left} and \eqref{eq:HWZ-A-right} are established in \cite[Lemma~4.8]{HWZ}. It therefore remains to verify that the constants in these estimates can be chosen independently of \(f\).

The proof of \eqref{eq:HWZ-A-left} in \cite[Lemma~4.8]{HWZ} uses the estimate \eqref{eq:approx-rarefaction-refined-tails} with \(\theta=1/5\) and the bound \(|u^S-u_m|\leq\delta_S\). The proof of \eqref{eq:HWZ-A-right} uses \eqref{eq:perturbed-shock-right-tail} and the \(L^5\)-estimate for \(u^R_\xi\) in \eqref{eq:approx-rarefaction-derivatives}. Since the constants in the shock-profile bounds and in Lemma~\ref{lem:approximate-rarefaction} can be chosen independently of \(f\), the corresponding calculations in \cite[Lemma~4.8]{HWZ} yield \eqref{eq:HWZ-A-left} and \eqref{eq:HWZ-A-right} with uniform constants.
\end{proof}

\subsection{Integrated bounds for the profile residual}

\begin{lemma} \label{lem:profile-interaction-input}
Under the assumptions in Proposition~\ref{prop:large-L2-apriori}, there exists a constant \(C>0\), independent of \(f\) and \(\delta_R\), such that
\begin{equation*} 
\sup_{t\ge0}\int_{\mathbb R}|F(t,\xi)|\,d\xi\le1,\qquad \lim_{t\to+\infty}Q(t) \le C\left(\frac{\delta_R}{u_m}\right)^{8/33}, 
\end{equation*}
where \(F\) and \(Q\) are defined in \eqref{eq:residual} and \eqref{eq:interaction-size}, respectively.
\end{lemma}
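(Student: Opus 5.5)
Bound $\int|F|\,d\xi$ pointwise in time by splitting $F$ into its three pieces from \eqref{eq:residual}, then integrate the resulting decay rates in time to control $Q$.

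\textbf{Interaction pieces.} By the mean value theorem and \eqref{f''f''bd} (together with $|f''|\le Cu_m$ on the bounded range $[u_-,u_+]$, which follows from $\varepsilon_f\le\varepsilon_0$), we have
\begin{equation*}
|f'(\widetilde u)-f'(u^S)|\le Cu_m|u^R-u_m|,\qquad |f'(\widetilde u)-f'(u^R)|\le Cu_m|u^S-u_m|.
\end{equation*}
Hence
\begin{equation*}
|F|\le Cu_m|u^R-u_m|u^S_\xi+Cu_m|u^S-u_m|u^R_\xi+|u^R_{\xi\xi}|.
\end{equation*}
The middle term is exactly $u_m$ times the integrand in the first part of $Q$. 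Lemma~\ref{lem:HWZ-profile-interactions} bounds its integral by
\begin{equation*}
C(\delta_S\delta_R^{2/11}+\delta_S^{-3/5}\delta_R^{1/5})(1+t)^{-4/5}\le C\delta_R^{2/11}(1+t)^{-4/5},
\end{equation*}
using $\delta_S\sim 3u_m$, $u_m$ fixed, and $\delta_R\le\delta_0u_m$. The last term obeys $\|u^R_{\xi\xi}\|_{L^1}\le C\min\{\delta_R,(1+t)^{-1}\}$ by \eqref{eq:approx-rarefaction-derivatives}.

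For the first term, $\int|u^R-u_m|u^S_\xi\,d\xi$, split at $\xi=0$:
\begin{itemize}
\item On $\xi\le0$, use \eqref{eq:approx-rarefaction-refined-tails} with some $\theta\in(0,1)$ together with $\int u^S_\xi\le Cu_m$. This gives a bound $C\delta_R^{2\theta/(2+\theta)}(1+t)^{-1+\theta}$.
\item On $\xi\ge0$, use $|u^R-u_m|\le\delta_R$ crudely, or better, $u^R-u_m\le u^R-u^r+(u^r-u_m)$. Combine \eqref{eq:approx-rarefaction-fan-error} with $u^r-u_m\le C\xi/(u_m(1+t))$ and the tail $u^S_\xi\le C/(1+\xi)^2$ from \eqref{eq:perturbed-shock-right-tail}. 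The factor $\xi/(1+\xi)^2$ gives $\log$ growth up to $\lambda_R(1+t)$, so this piece is bounded by $C\delta_R^\theta(1+t)^{-1+\theta}+C(1+t)^{-1}\log(1+\lambda_R(1+t))$, with the latter also capped by $\delta_R$.
\end{itemize}

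\textbf{The uniform bound $\sup_t\int|F|\le1$.} Every piece is bounded by $C\delta_R^{\gamma}$ for some $\gamma>0$ uniformly in $t$. Choosing $\delta_0$ small therefore gives $\int|F|\le 1$.

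\textbf{Interpolation for $Q$.} For the first part of $Q$, square the bound: $(\cdot)^2\le C\delta_R^{4/11}(1+t)^{-8/5}$, which is integrable and gives $C\delta_R^{4/11}$. For $(\int|F|)^{4/3}$, each piece has the form $\min\{A\delta_R^{a},\,B\delta_R^{b}(1+t)^{-p}\}$. Raising to the power $4/3$ requires $4p/3>1$, i.e.\ $p>3/4$. The interaction piece has $p=4/5$, so it is integrable and contributes $C\delta_R^{8/33}$, since $(2/11)\cdot(4/3)=8/33$; this is evidently the source of the exponent. The $u^R_{\xi\xi}$ piece integrates $\min\{\delta_R,(1+t)^{-1}\}^{4/3}$ to $C\delta_R^{1/3}$. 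For the $u^S_\xi$-weighted piece, take $\theta$ small (say $\theta<1/4$), so that $p=1-\theta>3/4$. Its exponent $\tfrac43\cdot\tfrac{2\theta}{2+\theta}$ may then be smaller than $8/33$. In that case, interpolate with the crude bound $C\delta_R$ on the interval $t\le\delta_R^{-\mu}$ and optimize $\mu$ to reach an exponent at least $8/33$. Finally, $\delta_R^{\gamma}\le C(\delta_R/u_m)^{8/33}$ whenever $\gamma\ge8/33$, since $\delta_R\le\delta_0u_m$.

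\textbf{Main obstacle.} The delicate part is the shock-tail term $\int_0^\infty|u^R-u_m|u^S_\xi\,d\xi$. There the algebraic tail $u^S_\xi\sim\xi^{-2}$ combined with the linear growth of $u^r-u_m$ produces a logarithm, and one must check that the time-integrated $4/3$-power still comes out of size at least $\delta_R^{8/33}$. I would first try splitting time at $T_*=\delta_R^{-\mu}$: use the bound $\delta_R$ for $t\le T_*$ and the decay bound for $t\ge T_*$. If this falls short, I would instead mimic the HWZ $L^5$ argument from \cite{HWZ} for this term.
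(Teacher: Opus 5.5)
Your argument is correct, but for the term $B(t):=\int_{\mathbb R}|u^R-u_m|u^S_\xi\,d\xi$ you take a genuinely different and much longer route than the paper.

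The paper never estimates $B$ directly. Since $u^S\le u_m\le u^R$, both factors of $(u_m-u^S)(u^R-u_m)$ are nonnegative, and the product vanishes as $\xi\to\pm\infty$. Integrating its $\xi$-derivative therefore gives the exact identity
\[
\int_{\mathbb R}(u_m-u^S)u^R_\xi\,d\xi=\int_{\mathbb R}(u^R-u_m)u^S_\xi\,d\xi ,
\]
that is, $B=A$. Lemma~\ref{lem:HWZ-profile-interactions} then controls both interaction pieces at once by $C\delta_R^{2/11}(1+t)^{-4/5}$. The refined tails, the fan error, the shock-tail logarithm and the time splitting are all unnecessary, and the ``main obstacle'' you identify simply disappears.

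Your direct route does close, though. With the crude cap $\delta_R$, any piece bounded by $\min\{\delta_R,\delta_R^{a}(1+t)^{-p}\}$ with $a<1$ and $p>3/4$ contributes $C\delta_R^{4/3-(1-a)/p}$ to the time integral of its $4/3$ power. The resulting exponents are:
\begin{itemize}
\item the left piece with $\theta=1/5$ (so $a=2/11$, $p=4/5$) gives $\delta_R^{41/132}$;
\item the fan-error piece with $\theta<1/4$ gives $\delta_R^{1/3}$;
\item the piece $\min\{\delta_R,(1+t)^{-1}\log(1+\lambda_R(1+t))\}$ gives $C\delta_R^{1/3}$ after substituting $s=\lambda_R(1+t)$.
\end{itemize}
All of these exceed $8/33$, so you do not need the fallback to the HWZ $L^5$ argument.

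You did omit the region $\xi\ge\lambda_R(1+t)$, where $u^R-u_m$ is of size $\delta_R$ rather than small. There, $\int_{\lambda_R(1+t)}^\infty u^S_\xi\,d\xi\le C/(1+c\lambda_R(1+t))$ gives a bound $\min\{C\delta_R,C(1+t)^{-1}\}$, which is harmless.

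Your approach buys a slightly better exponent for $B$, which is irrelevant since $A$ already forces $8/33$. The paper's identity buys a two-line reduction that reuses the single interaction estimate already established.
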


\begin{proof}
Since \(\widetilde u-u^S=u^R-u_m\), \(\widetilde u-u^R=u^S-u_m\), and $|u^S|+|u^R|+|\widetilde u|\leq Cu_m$, we have
\begin{equation*} 
\left|u^S+\theta(u^R-u_m)\right|+\left|u^R+\theta(u^S-u_m)\right|\leq Cu_m,\qquad 0 \leq \theta \leq1. 
\end{equation*}
Writing \(q=f-f_0\), \eqref{eq:flux-perturbation-size} gives
\begin{equation*} 
|f''(v)|\leq6|v|+|q''(v)|\leq6|v|+\varepsilon_f(u_m+|v|)\leq Cu_m 
\end{equation*}
for \(v=u^S+\theta(u^R-u_m)\) or \(v=u^R+\theta(u^S-u_m)\). Hence, by \eqref{eq:residual} and the fundamental theorem of calculus, we have
\begin{equation}\label{eq:F-by-ABV} 
\begin{split} 
\int_{\mathbb R}|F(t,\xi)|\,d\xi &= \int_{\mathbb R}\left|\bigl[f'(\widetilde u)-f'(u^S)\bigr] u^S_\xi + \bigl[f'(\widetilde u)-f'(u^R)\bigr]u^R_\xi-u^R_{\xi\xi}\right|\,d\xi\\
&\leq \int_{\mathbb R} \bigg[ |u^R-u_m|u^S_\xi\int_0^1\left|f''\bigl(u^S+\theta(u^R-u_m)\bigr)\right|\,d\theta \\ 
& \qquad \quad +|u^S-u_m|u^R_\xi\int_0^1\left|f''\bigl(u^R+\theta(u^S-u_m)\bigr)\right|\,d\theta+|u^R_{\xi\xi}| \bigg] \, d\xi\\ &\leq C\bigl(A(t)+B(t)+V(t)\bigr), 
\end{split} 
\end{equation}
where
\begin{equation*} 
A(t) := \int_{\mathbb R}|u^S-u_m|u^R_\xi \, d\xi, \qquad B(t):=\int_{\mathbb R} |u^R-u_m|u^S_\xi \, d\xi, \qquad V(t):=\int_{\mathbb R}|u^R_{\xi\xi}| \, d\xi.
\end{equation*}
Since \(u^S\leq u_m\leq u^R\) and \((u_m-u^S)(u^R-u_m)\to0\) as \(\xi\to\pm\infty\), integration by parts gives
\begin{equation*} 
A(t)=\int_{\mathbb R}(u_m-u^S)u^R_\xi\,d\xi=\int_{\mathbb R}(u^R-u_m)u^S_\xi\,d\xi=B(t).
\end{equation*}
By \eqref{eq:interaction-size}, \eqref{eq:F-by-ABV}, and \(A=B\),
\begin{equation}\label{eq:Q-by-ABV} 
\begin{split} \lim_{\tau\to+\infty}Q(\tau) &=\int_0^\infty \bigg[ A(t)^2+\left(\int_{\mathbb R}|F(t,\xi)|\,d\xi\right)^{4/3} \bigg] \,dt\\ 
&\leq\int_0^\infty A(t)^2\,dt+C\int_0^\infty A(t)^{4/3}\,dt+C\int_0^\infty V(t)^{4/3}\,dt. 
\end{split} 
\end{equation}
We estimate the three integrals on the right-hand side of \eqref{eq:Q-by-ABV}.

Since \(\delta_S\sim u_m\) by Lemma~\ref{lem:perturbed-degenerate-profile}, \eqref{eq:HWZ-A-left} and \eqref{eq:HWZ-A-right} give
\begin{equation}\label{eq:A-pointwise} 
A(t)=B(t)\leq C\left(\delta_R^{2/11}+\delta_R^{1/5}\right)(1+t)^{-4/5}\leq C\delta_R^{2/11}(1+t)^{-4/5}. 
\end{equation}
Consequently, we obtain the bounds for the first two integrals in \eqref{eq:Q-by-ABV}:
\begin{equation}\label{eq:A-square-integral} 
\int_0^\infty A(t)^2\,dt\leq C\delta_R^{4/11}\int_0^\infty(1+t)^{-8/5}\,dt\leq C\delta_R^{4/11} 
\end{equation}
and
\begin{equation}\label{eq:A-four-thirds-integral} 
\int_0^\infty A(t)^{4/3}\,dt\leq C\delta_R^{8/33}\int_0^\infty(1+t)^{-16/15}\,dt\leq C\delta_R^{8/33}.
\end{equation}

It remains to estimate \(V(t)\). By \eqref{eq:approx-rarefaction-derivatives} with \(p=1\), we have
\begin{equation*} 
V(t) = \lVert u^R_{\xi\xi}(t) \rVert_{L^1} \leq C\min\{\delta_R,(1+t)^{-1}\}.
\end{equation*}
Splitting the time integral at \(1+t=\delta_R^{-1}\), we obtain
\begin{equation}\label{eq:V-four-thirds-integral} 
\int_0^\infty V(t)^{4/3}\,dt \leq C\delta_R^{4/3}\int_0^{\delta_R^{-1}-1} \, dt + C\int_{\delta_R^{-1}-1}^\infty(1+t)^{-4/3}\,dt\leq C\delta_R^{1/3}. 
\end{equation}

Substituting \eqref{eq:A-square-integral}, \eqref{eq:A-four-thirds-integral}, and \eqref{eq:V-four-thirds-integral} into \eqref{eq:Q-by-ABV}, we conclude that
\begin{equation}\label{eq:Q-infinity-bound} 
\lim_{t\to+\infty}Q(t) \leq C\left(\delta_R^{4/11}+\delta_R^{8/33}+\delta_R^{1/3}\right)\leq C \delta_R^{8/33} \leq C\left(\frac{\delta_R}{u_m}\right)^{8/33} 
\end{equation}
for sufficiently small $\delta_R>0$, where \(u_m^{8/33}\) is absorbed into \(C\) in the last inequality.

Finally, we establish the uniform bound on $\lVert F(t) \rVert_{L^1}$. By \eqref{eq:A-pointwise} and \eqref{eq:approx-rarefaction-derivatives} with \(p=1\), we have
\begin{equation*} 
A(t)+B(t)+V(t)\leq C\left(\delta_R^{2/11}+\delta_R\right)\leq C\delta_R^{2/11}.
\end{equation*}
Hence, \eqref{eq:F-by-ABV} gives, for sufficiently small \(\delta_R>0\),
\begin{equation*} 
\sup_{t\geq0}\int_{\mathbb R}|F(t,\xi)|\,d\xi\leq C\delta_R^{2/11}\leq 1. 
\end{equation*}
This, together with \eqref{eq:Q-infinity-bound}, completes the proof.
\end{proof}

\subsection{Estimate of \texorpdfstring{$\mathcal I_f$}{If}}

We now estimate the interaction term \(\mathcal I_f\) arising from the rarefaction part of \eqref{eq:master-decomposition}.

\begin{lemma}\label{lem:interaction-term-estimate}
Under the assumptions in Proposition~\ref{prop:large-L2-apriori}, for every \(\gamma>0\), there exists a constant \(C_\gamma>0\), independent of $f$, such that
\begin{equation}\label{eq:perturbed-interaction-estimate}
\begin{split}
|\mathcal I_f(t)|
&\leq \gamma\left(u_m\int_{\mathbb R}\phi_\xi^2\,d\xi+\int_{\mathbb R}u^R_\xi\left(u_m^2\phi^2+\phi^4\right)\,d\xi\right)\\
&\quad+C_\gamma\left(\int_{\mathbb R}(u_m-u^S)u^R_\xi\,d\xi\right)^2\mathcal E(t)
+ C_\gamma\varepsilon_f\int_{\mathbb R}u^R_\xi\left(u_m^2\phi^2+\phi^4\right) \, d\xi
\end{split}
\end{equation}
for all \(t\in[0,T]\), where \(\mathcal I_f\) is defined in \eqref{eq:general-good-interaction-remainder}.
\end{lemma}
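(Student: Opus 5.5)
Split $\mathcal A_f^\eta=\mathcal A_{f_0}^\eta+\mathcal A_q^\eta$ with $q=f-f_0$. This writes
\begin{equation*}
\mathcal I_f=\int_{\mathbb R}u^R_\xi\bigl(\mathcal A_{f_0}^\eta(\widetilde u,\phi)-\mathcal A_{f_0}^\eta(u^R,\phi)\bigr)\,d\xi+\int_{\mathbb R}u^R_\xi\Bigl(\bigl[\mathcal A_f^\eta-\mathcal A_{f_0}^\eta\bigr](\widetilde u,\phi)-\bigl[\mathcal A_f^\eta-\mathcal A_{f_0}^\eta\bigr](u^R,\phi)\Bigr)\,d\xi.
\end{equation*}
The second integral is bounded directly by \eqref{eq:q-rarefaction-side-mixed-symbol}, giving $C\varepsilon_f\int u^R_\xi(u_m^2\phi^2+\phi^4)\,d\xi$. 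The real work is in the cubic part.

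For $f_0(u)=u^3$, compute $\mathcal A_{f_0}^\eta(v,z)$ explicitly. From the formula used in the proof of Lemma~\ref{lem:remaining-good-terms}, its $v$-dependence is $3\alpha_\eta v z^2+v z^3$ plus terms independent of $v$. Hence
\begin{equation*}
\mathcal A_{f_0}^\eta(\widetilde u,\phi)-\mathcal A_{f_0}^\eta(u^R,\phi)=(\widetilde u-u^R)\phi^2(3\alpha_\eta+\phi)=-(u_m-u^S)\phi^2(3\alpha_\eta+\phi).
\end{equation*}
The cubic part is therefore bounded by $C\int(u_m-u^S)u^R_\xi(u_m\phi^2+|\phi|^3)\,d\xi$. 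The weight $u_m-u^S$ is not small pointwise, so the integrand cannot be absorbed into $\gamma\int u^R_\xi(u_m^2\phi^2+\phi^4)$ as it stands.

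The plan is to use $L^\infty$ control of $\phi$ and factor out the interaction integral $A(t):=\int(u_m-u^S)u^R_\xi\,d\xi$:
\begin{equation*}
\int(u_m-u^S)u^R_\xi\bigl(u_m\phi^2+|\phi|^3\bigr)\,d\xi\le A(t)\bigl(u_m\|\phi\|_{L^\infty}^2+\|\phi\|_{L^\infty}^3\bigr).
\end{equation*}
By Gagliardo--Nirenberg, $\|\phi\|_{L^\infty}^2\le 2\|\phi\|_{L^2}\|\phi_\xi\|_{L^2}$, and \eqref{eq:weighted-energy-equivalence} gives $\|\phi\|_{L^2}^2\le C\mathcal E/u_m$. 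For the quadratic piece, Young's inequality yields
\begin{equation*}
A\,u_m\|\phi\|_{L^2}\|\phi_\xi\|_{L^2}\le\gamma u_m\|\phi_\xi\|_{L^2}^2+C_\gamma A^2u_m\|\phi\|_{L^2}^2\le\gamma u_m\|\phi_\xi\|^2_{L^2}+C_\gamma A^2\mathcal E,
\end{equation*}
which has exactly the form in \eqref{eq:perturbed-interaction-estimate}.

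The cubic piece $A\|\phi\|_{L^\infty}^3\lesssim A\|\phi\|_{L^2}^{3/2}\|\phi_\xi\|_{L^2}^{3/2}$ is the main obstacle, because Young's inequality produces a power of $\mathcal E$ higher than one, and that does not fit the linear-in-$\mathcal E$ form. I would avoid it by splitting pointwise before taking suprema. On the set where $|\phi|\le Ku_m$, the cubic term is dominated by the quadratic one and handled as above. On the set where $|\phi|>Ku_m$, I would bound $(u_m-u^S)|\phi|^3\le 3u_m|\phi|^3\le (3/K)|\phi|^4$. Then $\int u^R_\xi(u_m-u^S)|\phi|^3\le (3/K)\int u^R_\xi\phi^4$, and choosing $K\gtrsim 1/\gamma$ absorbs this term into $\gamma\int u^R_\xi\phi^4$. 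The constant $u_m^2\phi^2$ in the target norm is not needed here, since only the $\phi^4$ part is used. Collecting the three contributions gives \eqref{eq:perturbed-interaction-estimate}, with constants depending only on $u_m$, $\eta$ and $\gamma$, and in particular independent of $f$.
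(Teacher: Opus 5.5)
Your proposal is correct and follows the paper's proof. You use the same split into the cubic part $(u^S-u_m)u^R_\xi\phi^2(3\alpha_\eta+\phi)$ and the $q$-part controlled by \eqref{eq:q-rarefaction-side-mixed-symbol}. You also handle the quadratic piece the same way, via the one-dimensional Sobolev and Young inequalities, to produce $C_\gamma A(t)^2\mathcal E(t)$.

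The only difference is cosmetic. For the cubic piece the paper applies Cauchy--Schwarz and then Young to get $\int(u_m-u^S)u^R_\xi|\phi|^3\le\gamma\int u^R_\xi\phi^4+C_\gamma u_m\int(u_m-u^S)u^R_\xi\phi^2$, and your threshold splitting at $|\phi|=Ku_m$ achieves exactly this pointwise. One small correction: for a perturbed flux the bound is $u_m-u^S\le u_m-u_-<7u_m/2$ rather than $3u_m$, which changes nothing.
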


\begin{proof}
By the definition of \(\mathcal I_f\) in \eqref{eq:general-good-interaction-remainder},
\begin{equation*}
\mathcal I_f=\int_{\mathbb R}u^R_\xi\left[\mathcal A_f^\eta(\widetilde u,\phi)-\mathcal A_f^\eta(u^R,\phi)\right]\,d\xi.
\end{equation*}
We decompose the difference in the integrand as
\begin{equation*}
\begin{split}
\mathcal A_f^\eta(\widetilde u,\phi)-\mathcal A_f^\eta(u^R,\phi)
&=\mathcal A_{f_0}^\eta(\widetilde u,\phi)-\mathcal A_{f_0}^\eta(u^R,\phi)\\
&\quad+\bigl[\mathcal A_f^\eta(\widetilde u,\phi)-\mathcal A_{f_0}^\eta(\widetilde u,\phi)\bigr]-\bigl[\mathcal A_f^\eta(u^R,\phi)-\mathcal A_{f_0}^\eta(u^R,\phi)\bigr].
\end{split}
\end{equation*}
For \(f_0(u)=u^3\), \eqref{eq:general-flux-nonlinear-form} and \eqref{eq:general-flux-A-form} give
\begin{equation*}
\mathcal A_{f_0}^\eta(v,z)=3\alpha_\eta vz^2+(\alpha_\eta+v)z^3+\frac34z^4.
\end{equation*}
Since \(\widetilde u-u^R=u^S-u_m\) by \eqref{eq:composite-profile}, it follows that
\begin{equation*}
\mathcal A_{f_0}^\eta(\widetilde u,\phi)-\mathcal A_{f_0}^\eta(u^R,\phi)=(u^S-u_m)\phi^2(3\alpha_\eta+\phi).
\end{equation*}
Substituting this identity into the above decomposition, we obtain
\begin{equation}\label{eq:rarefaction-side-mixed-decomposition}
\begin{split}
\mathcal I_f &= \int_{\mathbb R}(u^S-u_m)u^R_\xi\phi^2(3\alpha_\eta+\phi)\,d\xi\\
&\quad+\int_{\mathbb R}u^R_\xi\Bigl(\bigl[\mathcal A_f^\eta(\widetilde u,\phi)-\mathcal A_{f_0}^\eta(\widetilde u,\phi)\bigr]-\bigl[\mathcal A_f^\eta(u^R,\phi)-\mathcal A_{f_0}^\eta(u^R,\phi)\bigr]\Bigr)\,d\xi.
\end{split}
\end{equation}
Note that, by \eqref{eq:q-rarefaction-side-mixed-symbol}, the second integral is bounded as
\begin{equation}\label{eq:q-rarefaction-side-mixed-term}
\begin{split}
& \left|\int_{\mathbb R}u^R_\xi\Bigl(\bigl[\mathcal A_f^\eta(\widetilde u,\phi)-\mathcal A_{f_0}^\eta(\widetilde u,\phi)\bigr]-\bigl[\mathcal A_f^\eta(u^R,\phi)-\mathcal A_{f_0}^\eta(u^R,\phi)\bigr]\Bigr) \, d\xi\right| \\
& \quad \leq C \varepsilon_f \int_{\mathbb R} u^R_\xi\left(u_m^2 \phi^2+ \phi^4 \right) \, d\xi.
\end{split}
\end{equation}

We now estimate the first integral in \eqref{eq:rarefaction-side-mixed-decomposition}. Since \(u^S\leq u_m\), we have \((u_m-u^S)u^R_\xi\geq0\) and hence
\begin{equation} \label{If_first}
\left|\int_{\mathbb R}(u^S-u_m)u^R_\xi\phi^2(3\alpha_\eta+\phi)\,d\xi\right|
\leq3\alpha_\eta\int_{\mathbb R}(u_m-u^S)u^R_\xi\phi^2\,d\xi
+\int_{\mathbb R}(u_m-u^S)u^R_\xi|\phi|^3\,d\xi.
\end{equation}
By the one-dimensional Sobolev inequality and Young's inequality, we have
\begin{equation} \label{oSY}
\begin{split}
u_m\int_{\mathbb R}(u_m-u^S)u^R_\xi\phi^2\,d\xi &\leq u_m\|\phi\|_{L^\infty}^2\int_{\mathbb R}(u_m-u^S)u^R_\xi\,d\xi\\
&\leq2u_m\|\phi\|_{L^2}\|\phi_\xi\|_{L^2} \int_{\mathbb R}(u_m-u^S)u^R_\xi \, d\xi \\
&\leq\gamma u_m\|\phi_\xi\|_{L^2}^2 + C_\gamma \left(\int_{\mathbb R}(u_m-u^S)u^R_\xi\,d\xi\right)^2\mathcal E(t),
\end{split}
\end{equation}
where, in the last inequality, we used $\mathcal{E}(t) \sim u_m \lVert \phi \rVert_{L^2}^2$. Moreover, since \(0\leq u_m-u^S\leq C u_m\), the Cauchy--Schwarz inequality yields
\begin{equation*}
\begin{split}
\int_{\mathbb R}(u_m-u^S)u^R_\xi|\phi|^3\,d\xi
&\leq\left(\int_{\mathbb R}(u_m-u^S)u^R_\xi\phi^2\,d\xi\right)^{1/2}
\left(\int_{\mathbb R}(u_m-u^S)u^R_\xi\phi^4\,d\xi\right)^{1/2}\\
&\leq\left(Cu_m\int_{\mathbb R}(u_m-u^S)u^R_\xi\phi^2\,d\xi\right)^{1/2}
\left(\int_{\mathbb R}u^R_\xi\phi^4\,d\xi\right)^{1/2}.
\end{split}
\end{equation*}
Using Young's inequality and \eqref{oSY}, we have
\begin{equation} \label{oSY1}
\begin{split}
\int_{\mathbb R}(u_m-u^S)u^R_\xi|\phi|^3 \, d\xi & \leq \gamma \int_{\mathbb R}u^R_\xi\phi^4 \, d\xi + C_\gamma u_m\int_{\mathbb R}(u_m-u^S)u^R_\xi\phi^2 \, d\xi \\
& \leq \tilde\gamma \left(\int_{\mathbb R}u^R_\xi\phi^4 \, d\xi + u_m\|\phi_\xi\|_{L^2}^2 \right) + C_{\tilde\gamma} \left(\int_{\mathbb R}(u_m-u^S)u^R_\xi\,d\xi\right)^2\mathcal E(t).
\end{split}
\end{equation}
Since \(\alpha_\eta\leq Cu_m\), combining \eqref{eq:q-rarefaction-side-mixed-term} and \eqref{If_first}--\eqref{oSY1}, and relabeling the Young parameters, gives \eqref{eq:perturbed-interaction-estimate}.
\end{proof}

\subsection{Estimate of \texorpdfstring{$\mathcal{R}$}{R}}

The residual contribution \(\mathcal R\) is estimated as follows.

\begin{lemma}\label{lem:remainder-estimate}
Under the assumptions in Proposition~\ref{prop:large-L2-apriori}, for every \(\gamma>0\), there exists a constant \(C_\gamma>0\), independent of $f$, such that
\begin{equation}\label{eq:remainder-estimate}
|\mathcal R(t)|\leq\gamma u_m \|\phi_\xi\|_{L^2}^2+C_\gamma \left( \int_{\mathbb R}|F(t,\xi)|\,d\xi \right)^{4/3}\left(1+\mathcal E(t)\right)
\end{equation}
for all \(t\in[0,T]\), where \(\mathcal R\) is defined in \eqref{eq:general-good-interaction-remainder}.
\end{lemma}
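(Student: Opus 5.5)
Recall that
\[
\mathcal R=-\int_{\mathbb R}F\Bigl(a_\eta\phi-\tfrac12\phi^2\Bigr)\,d\xi .
\]
The plan is to bound this crudely by the $L^1$ norm of $F$ times $L^\infty$ norms of $\phi$. We then convert the $L^\infty$ norms into $L^2$ and $\dot H^1$ norms via the one-dimensional Gagliardo--Nirenberg inequality $\|\phi\|_{L^\infty}^2\le 2\|\phi\|_{L^2}\|\phi_\xi\|_{L^2}$. Using \eqref{eq:weight-bounds}, $|a_\eta|\le Cu_m$, and writing $\|F\|_1:=\int_{\mathbb R}|F|\,d\xi$, we get
\[
|\mathcal R|\le Cu_m\|F\|_1\|\phi\|_{L^\infty}+\tfrac12\|F\|_1\|\phi\|_{L^\infty}^2 .
\]
We treat the two terms separately.

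\emph{Linear term.} By the Gagliardo--Nirenberg inequality,
\[
u_m\|F\|_1\|\phi\|_{L^\infty}\le Cu_m\|F\|_1\|\phi\|_{L^2}^{1/2}\|\phi_\xi\|_{L^2}^{1/2}.
\]
We apply Young's inequality with exponents $4$ and $4/3$, placing $u_m^{1/4}\|\phi_\xi\|_{L^2}^{1/2}$ in the $L^4$ slot. This gives
\[
\gamma u_m\|\phi_\xi\|_{L^2}^2+C_\gamma u_m\|F\|_1^{4/3}\|\phi\|_{L^2}^{2/3}.
\]
Next, $\|\phi\|_{L^2}^{2/3}\le C(1+\|\phi\|_{L^2}^2)$, and $u_m\|\phi\|_{L^2}^2\le C\mathcal E$ by \eqref{eq:weighted-energy-equivalence}. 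Since $u_m$ is fixed, it is absorbed into $C_\gamma$. The result is $\gamma u_m\|\phi_\xi\|^2_{L^2}+C_\gamma\|F\|_1^{4/3}(1+\mathcal E)$. The exponent $4/3$ arises here.

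\emph{Quadratic term.} We have
\[
\|F\|_1\|\phi\|_{L^\infty}^2\le 2\|F\|_1\|\phi\|_{L^2}\|\phi_\xi\|_{L^2}\le \gamma u_m\|\phi_\xi\|_{L^2}^2+C_\gamma\|F\|_1^2\|\phi\|_{L^2}^2 .
\]
The factor $\|F\|_1^2$ does not directly carry the power $4/3$. Here we invoke Lemma~\ref{lem:profile-interaction-input}, which gives $\sup_t\|F(t)\|_1\le1$, so that $\|F\|_1^2\le\|F\|_1^{4/3}$. Combined with $\|\phi\|_{L^2}^2\le C\mathcal E$, this term is bounded by $C_\gamma\|F\|_1^{4/3}\mathcal E$.

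Adding the two estimates and renaming $\gamma$ yields \eqref{eq:remainder-estimate}. The only subtle step is matching the exponents: the linear term forces the power $4/3$, and the quadratic term is reconciled with it only through the uniform bound $\|F\|_1\le1$ from Lemma~\ref{lem:profile-interaction-input}. All constants depend only on $u_m$ and $\eta$, through \eqref{eq:weight-bounds}, and are therefore independent of $f$.
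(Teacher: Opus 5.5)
Your proposal is correct and follows the paper's proof essentially line by line. The paper likewise splits $\mathcal R$ into a linear and a quadratic part bounded by $\|\phi\|_{L^\infty}\int_{\mathbb R}|F|\,d\xi$, applies the one-dimensional Sobolev inequality with Young's inequality (exponents $4$ and $4/3$) to the linear part, and reconciles $\bigl(\int_{\mathbb R}|F|\,d\xi\bigr)^2$ with $\bigl(\int_{\mathbb R}|F|\,d\xi\bigr)^{4/3}$ through the uniform bound $\int_{\mathbb R}|F|\,d\xi\le1$ from Lemma~\ref{lem:profile-interaction-input}.
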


\begin{proof}
Recalling the definition of $\mathcal{R}$ from \eqref{eq:general-good-interaction-remainder} and using \eqref{eq:weight-bounds}, we have
\begin{equation*}
\begin{split}
|\mathcal{R}(t)| & = \left| \int_\mathbb{R} F\left( a_\eta \phi - \frac{1}{2} \phi^2 \right) \, d\xi \right| \\
&\leq C u_m \lVert \phi \rVert_{L^\infty} \int_\mathbb{R} |F| \, d\xi + \frac{1}{2} \lVert \phi \rVert_{L^\infty}^2 \int_\mathbb{R} |F| \, d\xi =: I + II.
\end{split}
\end{equation*}
By the one-dimensional Sobolev inequality and Young's inequality with exponents \(4\) and \(4/3\),
\begin{equation} \label{RI_est}
|I|\leq Cu_m\|\phi\|_{L^2}^{1/2}\|\phi_\xi\|_{L^2}^{1/2}\int_{\mathbb R}|F|\,d\xi\leq\gamma u_m \|\phi_\xi\|_{L^2}^2+C_\gamma u_m\|\phi\|_{L^2}^{2/3}\left(\int_{\mathbb R}|F|\,d\xi\right)^{4/3}.
\end{equation}
Applying \(\|\phi\|_{L^2}^{2/3}\leq1+\|\phi\|_{L^2}^2\) together with \eqref{eq:weighted-energy-equivalence} to the last term, we obtain
\begin{equation*}
|I|\leq\gamma u_m \|\phi_\xi\|_{L^2}^2+C_\gamma\left(\int_{\mathbb R}|F|\,d\xi\right)^{4/3}(1+\mathcal E(t)).
\end{equation*}
Similarly to \eqref{RI_est},
\begin{equation*}
|II| \leq C \lVert \phi \rVert_{L^2} \lVert \phi_\xi \rVert_{L^2} \int_\mathbb{R} |F| \, d\xi \leq \gamma u_m  \lVert \phi_\xi \rVert_{L^2}^2 + C_\gamma \lVert \phi \rVert_{L^2}^2 \left( \int_\mathbb{R} |F| \, d\xi \right)^2.
\end{equation*}
Since \( \int_{\mathbb R}|F(t,\xi)|\,d\xi \leq1\) by Lemma~\ref{lem:profile-interaction-input}, we have 
\begin{equation*}
\left( \int_{\mathbb R}|F(t,\xi)| \, d\xi \right)^2\leq \left( \int_{\mathbb R}|F(t,\xi)|\, d\xi \right)^{4/3}.
\end{equation*}
Collecting all the estimates, we obtain \eqref{eq:remainder-estimate}.
\end{proof}

\subsection{Proof of \texorpdfstring{Proposition~\ref{prop:large-L2-apriori}}{the uniform a priori estimate}}

Combining the exact decomposition \eqref{eq:master-decomposition} with the coercivity estimate \eqref{eq:perturbed-good-term}, the interaction estimate \eqref{eq:perturbed-interaction-estimate}, and the bound \eqref{eq:remainder-estimate} for \(\mathcal R\), we obtain after choosing \(\gamma>0\) sufficiently small and then \(\varepsilon_0>0\) sufficiently small that
\begin{equation}\label{E_inequ}
\begin{split}
\frac{d}{dt}\mathcal E + c \mathcal{D}(t) &\leq C\left(\int_{\mathbb R}|F(t,\xi)|\,d\xi\right)^{4/3} + C \mathcal E(t) Q'(t),
\end{split}
\end{equation}
where $\mathcal{D}$ and $Q$ are defined by \eqref{eq:dissipation} and \eqref{eq:interaction-size}, respectively. Multiplying \eqref{E_inequ} by \(e^{-CQ(t)}\), we obtain
\begin{equation*}
\frac{d}{dt}\left(e^{-CQ(t)}\mathcal E(t)\right) +ce^{-CQ(t)}\mathcal D(t)
\leq Ce^{-CQ(t)} \left(\int_{\mathbb R}|F(t,\xi)|\,d\xi\right)^{4/3}.
\end{equation*}
Since \(Q(0)=0\) and \(Q\) is nondecreasing by \eqref{eq:interaction-size}, we have \(Q(s)\geq0\) and \(Q(t)-Q(s)\geq0\) for \(0\leq s\leq t\). Integrating over \([0,t]\) and multiplying by \(e^{CQ(t)}\), we therefore obtain
\begin{equation*}
\begin{split}
\mathcal E(t)+c\int_0^t\mathcal D(s)\,ds &\leq e^{CQ(t)} \left[ \mathcal E(0) +C\int_0^t\left(\int_{\mathbb R}|F(s,\xi)|\,d\xi\right)^{4/3} \, ds \right]\\
& \leq e^{CQ(t)}\left(\mathcal E(0)+CQ(t)\right).
\end{split}
\end{equation*}
This proves \eqref{eq:large-L2-apriori}. Moreover, since \(Q\) is nondecreasing, Lemma~\ref{lem:profile-interaction-input} gives
\begin{equation*}
Q(t)\leq C\left(\frac{\delta_R}{u_m}\right)^{8/33},\qquad t\geq0.
\end{equation*}
Substituting this bound into \eqref{eq:large-L2-apriori} yields \eqref{eq:large-L2-apriori-power}.

\section{Proofs of the main theorems} \label{sec:proof-mainthm}

\subsection{Proof of Theorem~\ref{mainthm}}

We divide the proof into five steps.

\subsubsection*{Step 1. Choice of parameters and construction of the component waves}

We set \(\kappa:=6/u_m\), so that \(\kappa>5/u_m\). Let \(\eta_0>0\) be the constant in Proposition~\ref{prop:large-L2-apriori}, and fix \(\eta:=\eta_0/2\). We then choose \(\varepsilon_0>0\) and \(\delta_0>0\) sufficiently small so that Proposition~\ref{prop:large-L2-apriori} and Lemmas~\ref{lem:perturbed-degenerate-profile}, \ref{lem:existence_shock}, \ref{lem:approximate-rarefaction}, and~\ref{lem:profile-interaction-input} apply.

The existence and uniqueness of the state \(u_-\in(-5u_m/2,-3u_m/2)\) satisfying \eqref{eq:perturbed-degenerate-compatibility} follow from Lemma~\ref{lem:perturbed-degenerate-profile}. Lemma~\ref{lem:existence_shock} establishes the existence and uniqueness of the corresponding viscous shock profile \(u^S\) satisfying \eqref{eq:shock-ode}. Moreover, \eqref{eq:rarefaction-convexity} implies that \(f'\) is strictly increasing on \([u_m,u_+]\). Hence the approximate rarefaction wave \(u^R\) and the composite profile \(\widetilde u\) introduced in Sections~\ref{sec:1.2} and \ref{sec:1.3} are well defined.

\subsubsection*{Step 2. Global existence and uniqueness of a strong solution}

Lemmas~\ref{lem:existence_shock} and~\ref{lem:approximate-rarefaction}, together with the definition \eqref{eq:composite-profile}, imply that
\begin{equation*}
\lim_{\xi\to-\infty}\widetilde u(0,\xi)=u_-,\qquad \lim_{\xi\to+\infty}\widetilde u(0,\xi)=u_+,\qquad u_-\leq\widetilde u(0,\xi)\leq u_+.
\end{equation*}
They also give
\begin{equation*}
\widetilde u(0,\cdot)\in C^2(\mathbb R),\qquad \widetilde u_x(0,\cdot),\widetilde u_{xx}(0,\cdot)\in L^2(\mathbb R).
\end{equation*}
Since \(u_0-\widetilde u(0,\cdot)\in H^1(\mathbb R)\), Proposition~\ref{prop:global-wellposedness}, applied with \(r=\widetilde u(0,\cdot)\), yields a unique global strong solution \(u\) of \eqref{eq:model}--\eqref{eq:initial-data}.

\subsubsection*{Step 3. Uniform stability estimates}

For the representation of \(u\) in the moving frame \(\xi=x-\sigma t\), let \(X\) be the unique local \(C^1\) solution of \eqref{eq:shift-ode} with \(X(0)=0\), as in Section~\ref{sec:3.1}. By \eqref{eq:appendix-maximum-principle}, \eqref{eq:weight}, and \eqref{eq:weight-bounds}, we have \(\|\phi(t,\cdot)\|_{L^\infty}\leq\|u_0\|_{L^\infty}+Cu_m\). Since \(\widetilde u_\xi\geq0\) and \(\int_{\mathbb R}\widetilde u_\xi\,d\xi=u_+-u_-\), we obtain
\begin{equation*}
\begin{split}
|\dot X(t)| &= \kappa\left|\int_{\mathbb R}\widetilde u_\xi\left(a_\eta\phi+\frac12\phi^2\right)d\xi\right| \\
&\leq \kappa(u_+-u_-)\left(\|a_\eta(t)\|_{L^\infty}\|\phi(t)\|_{L^\infty}+\frac12\|\phi(t)\|_{L^\infty}^2\right) \\
&\leq C\left(u_m\|\phi(t)\|_{L^\infty}+\|\phi(t)\|_{L^\infty}^2\right)\leq C\left(u_m^2+\|u_0\|_{L^\infty}^2\right).
\end{split}
\end{equation*}
Thus \(X\) extends uniquely to a function in \(C^1([0,\infty))\).

Fix \(T>0\). Proposition~\ref{prop:large-L2-apriori} applies to \(u\) and \(X\) on \([0,T]\). By \eqref{eq:perturbation} and \eqref{eq:weighted-energy}, the change of variables \(x=\xi-X(t)+\sigma t\) rewrites the weighted energy as
\begin{equation*}
\mathcal E(t)=\frac12\int_{\mathbb R}\bigl((5-\eta)u_m-\widetilde u(t,x-\sigma t+X(t))\bigr)\bigl(u(t,x)-\widetilde u(t,x-\sigma t+X(t))\bigr)^2\,dx.
\end{equation*}
Since \(X(0)=0\), the estimate \eqref{eq:large-L2-apriori} implies $\mathcal E(t)\leq e^{CQ(t)}\bigl(\mathcal E(0)+CQ(t)\bigr)$. Moreover, \eqref{eq:large-L2-apriori-power} and \eqref{eq:weighted-energy-equivalence} yield
\begin{equation}\label{eq:normalized-reference-stability}
\left\|u(t,\cdot)-\widetilde u(t,\cdot-\sigma t+X(t))\right\|_{L^2(\mathbb R)}^2
\leq C\left(\left\|u_0-\widetilde u(0,\cdot)\right\|_{L^2(\mathbb R)}^2+\left(\delta_R/u_m\right)^{8/33}\right)
\end{equation}
for all \(t\geq0\), where the factor \(e^{C(\delta_R/u_m)^{8/33}}\) is absorbed in \(C\) by \(\delta_R/u_m\leq\delta_0\). Since \(T>0\) is arbitrary, the estimate holds globally in time.

\subsubsection*{Step 4. Global \(L^2\) estimate of the shift velocity} 

By \eqref{eq:dissipation}, \eqref{eq:large-L2-apriori-power}, and the weight bounds \eqref{eq:weight-bounds}, for every \(T>0\), we have 
\begin{equation*}
\begin{aligned}
\int_0^T|\dot X(t)|^2\,dt \leq \int_0^T\mathcal D(t)\,dt &\leq C e^{C\left(\delta_R/u_m\right)^{8/33}}\left(\mathcal E(0)+C\left(\delta_R/u_m\right)^{8/33}\right) \\ 
&\leq C\left(\left\|u_0-\widetilde u(0,\cdot)\right\|_{L^2(\mathbb R)}^2+\left(\delta_R/u_m\right)^{8/33}\right).
\end{aligned}
\end{equation*} 
Letting \(T\to\infty\), we obtain \eqref{eq:shift-velocity-bound}.

\subsubsection*{Step 5. Replacement of the shifted approximate rarefaction} 

To complete the proof, we show that
\begin{equation}\label{eq:normalized-rarefaction-replacement}
\left\|u^R(t,\cdot-\sigma t+X(t))-\bar u^r\left(\frac{\cdot}{t}\right)\right\|_{L^2(\mathbb R)}^2\leq C\left(\|u_0-\widetilde u(0,\cdot)\|_{L^2(\mathbb R)}^2+\left(\delta_R/u_m\right)^{8/33}\right)
\end{equation}
for all \(t>0\). By the triangle inequality, we have
\begin{align*}
\left\|u^R(t,\cdot-\sigma t+X(t))-\bar u^r\left(\frac{\cdot}{t}\right)\right\|_{L^2}^2 & \leq C\left\|u^R(t,\cdot-\sigma t+X(t))-u^r(t,\cdot-\sigma t+X(t))\right\|_{L^2}^2 \\
& \quad + C\|u^r(t,\cdot-\sigma t+X(t))-u^r(t,\cdot-\sigma t)\|_{L^2}^2  \\
& \quad + C\left\|u^r(t,\cdot-\sigma t)-\bar u^r\left(\frac{\cdot}{t}\right)\right\|_{L^2}^2 \\
& =: I + II + III.
\end{align*}
For \(I\), taking \(\theta=1/4\) in \eqref{eq:approx-rarefaction-fan-error} and using \eqref{eq:approx-rarefaction-tails} and \eqref{lambdaR}, we obtain
\begin{equation*}
\begin{aligned}
I &\leq C\delta_R^2\int_{-\infty}^0e^{-4|\xi|}\,d\xi +C\delta_R^{1/2}(1+t)^{-3/2}\lambda_R(1+t) +C\delta_R^2\int_{\lambda_R(1+t)}^\infty e^{-4|\xi-\lambda_R(1+t)|}\,d\xi\\
&\leq C\delta_R^2+Cu_m\delta_R^{3/2}(1+t)^{-1/2} \leq C\left(\delta_R/u_m\right)^{3/2}.
\end{aligned}
\end{equation*}
By \eqref{eq:exact-rarefaction-shock-frame} and \eqref{eq:rarefaction-convexity},
\begin{equation*}
\|u^r_\xi(t)\|_{L^2(\mathbb R)}^2 =\frac{1}{1+t}\int_{u_m}^{u_+}\frac{1}{f''(v)}\,dv \leq C\frac{\delta_R}{u_m}\frac{1}{1+t}.
\end{equation*}
Using this, together with the Cauchy--Schwarz inequality and \eqref{eq:shift-velocity-bound}, we have
\begin{equation*}
\begin{aligned}
II &\leq C\frac{\delta_R}{u_m}\frac{|X(t)|^2}{1+t} \leq C\frac{\delta_R}{u_m}\int_0^t|\dot X(s)|^2\,ds \leq C\frac{\delta_R}{u_m}\left(\|u_0-\widetilde u(0,\cdot)\|_{L^2(\mathbb R)}^2+\left(\delta_R/u_m\right)^{8/33}\right).
\end{aligned}
\end{equation*}
Finally, for \(III\), using the explicit formulas for the two rarefaction
profiles and the monotonicity of \(g=(f')^{-1}\), we obtain
\begin{equation*}
\begin{aligned}
\left\|u^r(t,\cdot-\sigma t)-\bar u^r\left(\frac{\cdot}{t}\right)\right\|_{L^1(\mathbb R)} &=\int_0^{\lambda_R}\bigl(u_+-g(\sigma+y)\bigr)\,dy \leq \delta_R\lambda_R.
\end{aligned}
\end{equation*}
Since both profiles take values in \([u_m,u_+]\), we therefore have
\begin{equation*}
\begin{aligned}
III &\leq C\delta_R\left\|u^r(t,\cdot-\sigma t)-\bar u^r\left(\frac{\cdot}{t}\right)\right\|_{L^1(\mathbb R)} \leq C\delta_R^2\lambda_R \leq C\left(\delta_R/u_m\right)^3.
\end{aligned}
\end{equation*}
Combining the estimates for \(I\), \(II\), and \(III\), and using \(\delta_R/u_m\leq\delta_0\), proves \eqref{eq:normalized-rarefaction-replacement}. Together with \eqref{eq:normalized-reference-stability}, this yields \eqref{eq:normalized-contraction-power}.

\subsection{Proof of Theorem~\ref{cor:inviscid-stability}}

Theorem~\ref{cor:inviscid-stability} follows from Theorem~\ref{mainthm} by the inviscid-limit argument of \cite[Section~5]{KV2}; see also \cite[Remark~1.3]{EKK}. In the present scalar setting, the $L^2$ estimate and the $H^1$ control of the shifts simplify the argument.

\subsubsection{Well-prepared initial data and uniform estimates in $\nu$}

We define the rescaled reference profile by
\begin{equation*}
\widetilde u^\nu(t,\xi):=\widetilde u\left(\frac{t}{\nu},\frac{\xi}{\nu}\right).
\end{equation*}
By the definition of \(\widetilde u\) and \(\bar u_0\), \eqref{eq:perturbed-shock-left-tail}, \eqref{eq:perturbed-shock-right-tail}, and \eqref{eq:approx-rarefaction-tails}, we have
\begin{align*}
& \|\widetilde u(0,\cdot)-\bar u_0\|_{L^2(\mathbb R)}^2 \\
& \quad \leq C\int_{-\infty}^0\left(|u^S-u_-|^2+|u^R-u_m|^2\right)\,d\xi +C\int_0^\infty\left(|u_m-u^S|^2+|u_+-u^R|^2\right)\,d\xi \\
& \quad \leq C\delta_S^2\int_{-\infty}^0e^{-2c\delta_S^2|\xi|}\,d\xi+C\delta_R^2\int_{-\infty}^0e^{-4|\xi|}\,d\xi  +C\delta_S^2\int_0^\infty\frac{1}{(1+c\delta_S^2\xi)^2} \, d\xi \\
& \qquad +C\delta_R^2\lambda_R+C\delta_R^2\int_{\lambda_R}^\infty e^{-4|\xi-\lambda_R|}\,d\xi \leq C.
\end{align*}
Here, for \(0\leq\xi\leq\lambda_R\), we used \(0<u^R(0,\xi)-u_m<\delta_R\), and in the last inequality we used \eqref{lambdaR} and \(\delta_R/u_m\leq\delta_0\). Since \(\widetilde u^\nu(0,x)=\widetilde u(0,x/\nu)\) and \(\bar u_0(x)=\bar u_0(x/\nu)\), a change of variables gives
\begin{equation}\label{eq:initial-profile-limit}
\|\widetilde u^\nu(0,\cdot)-\bar u_0\|_{L^2(\mathbb R)}^2=\nu\|\widetilde u(0,\cdot)-\bar u_0\|_{L^2(\mathbb R)}^2\leq C\nu.
\end{equation}
As in \cite[Section~5.1]{KV2}, well-prepared initial data are obtained by mollification: take $u_0^\nu=u_0*\rho_\nu$, where $\rho_\nu(x)=\nu^{-1}\rho(x/\nu)$ is a standard mollifier. By \eqref{eq:perturbed-shock-left-tail}, \eqref{eq:perturbed-shock-right-tail}, and \eqref{eq:approx-rarefaction-derivatives}, we have $\partial_x\widetilde u^\nu(0,\cdot)\in L^2(\mathbb R)$. Since $u_0-\bar u_0\in L^2(\mathbb R)$, \eqref{eq:initial-profile-limit} and standard properties of mollifiers yield \eqref{eq:well-prepared-initial-data}, which proves \textup{(i)}.

Now let $\{u_0^\nu\}_{\nu>0}$ be any sequence of smooth initial data satisfying \eqref{eq:well-prepared-initial-data}. Then, by \eqref{eq:initial-profile-limit},
\begin{equation}\label{eq:well-prepared-initial-data_2}
u_0^\nu-\widetilde u^\nu(0,\cdot)\longrightarrow u_0-\bar u_0\quad\text{in }L^2(\mathbb R).
\end{equation}
Let $u^\nu$ be the corresponding global strong solution of \eqref{eq:model-nu}. Applying Theorem~\ref{mainthm} to $v^\nu(s,y):=u^\nu(\nu s,\nu y)$ gives a shift $Y^\nu$. Set
\begin{equation*}
X^\nu(t):=\nu Y^\nu\left(\frac{t}{\nu}\right).
\end{equation*}
Then $X^\nu(0)=0$. With the comparison profile
\begin{equation*}
U^\nu(t,x):=u^S\left(\frac{x-\sigma t+X^\nu(t)}{\nu}\right)+\bar u^r\left(\frac{x}{t}\right)-u_m,\qquad t>0,
\end{equation*}
scaling \eqref{eq:normalized-contraction-power} and \eqref{eq:shift-velocity-bound} yields
\begin{equation}\label{eq:uniform-inviscid-estimate}
\sup_{t>0}\|u^\nu(t)-U^\nu(t)\|_{L^2(\mathbb R)}^2+\int_0^\infty|\dot X^\nu(t)|^2\,dt\leq C\left(\|u_0^\nu-\widetilde u^\nu(0,\cdot)\|_{L^2(\mathbb R)}^2+\nu\left(\delta_R/u_m\right)^{8/33}\right).
\end{equation}
The exact rarefaction is unchanged by the scaling.

\subsubsection{Weak limits and stability}

By \eqref{eq:uniform-inviscid-estimate}, \eqref{eq:well-prepared-initial-data_2}, and $u_-\leq U^\nu\leq u_+$, the sequence $\{u^\nu\}$ is uniformly bounded in $L^\infty(0,T;L_{\mathrm{loc}}^2(\mathbb{R}))$ for any $T>0$. By weak compactness and a diagonal extraction, we obtain a subsequence satisfying \eqref{eq:weak-inviscid-limit}, proving \textup{(ii)}.

Let $u_\infty$ be any limit obtained in \eqref{eq:weak-inviscid-limit} along a sequence $\nu_n\downarrow0$. For simplicity, we write $\nu$ for $\nu_n$. By \eqref{eq:uniform-inviscid-estimate} and \eqref{eq:well-prepared-initial-data_2}, together with $X^\nu(0)=0$, the shifts $\{X^\nu\}$ are uniformly bounded in $H^1(0,T)$. By compactness and a diagonal extraction, there exist a further subsequence and $X_\infty\in H^1_{\mathrm{loc}}([0,\infty))$ such that
\begin{equation*}
X^\nu\rightharpoonup X_\infty\quad\text{in }H^1(0,T),\qquad X^\nu\longrightarrow X_\infty\quad\text{in }C([0,T])
\end{equation*}
for any $T>0$, with $X_\infty(0)=0$. Define
\begin{equation*}
U^\infty(t,x):=\bar u^s(x-\sigma t+X_\infty(t))+\bar u^r\left(\frac{x}{t}\right)-u_m.
\end{equation*}
The rarefaction components of $U^\nu$ and $U^\infty$ coincide, so \eqref{eq:perturbed-shock-left-tail} and \eqref{eq:perturbed-shock-right-tail} give
\begin{equation}\label{eq:shifted-comparison-profile-limit}
\sup_{0<t\leq T}\|U^\nu(t)-U^\infty(t)\|_{L^2(\mathbb R)}^2\leq C\nu+C(u_m-u_-)^2\|X^\nu-X_\infty\|_{C([0,T])}\longrightarrow0,
\end{equation}
where we used $\|\bar u^s(\cdot+h)-\bar u^s\|_{L^2(\mathbb R)}^2=(u_m-u_-)^2|h|$.
Following \cite[Section~5.2.4]{KV2}, we localize \eqref{eq:uniform-inviscid-estimate} in time and pass to the limit using \eqref{eq:well-prepared-initial-data_2}, \eqref{eq:weak-inviscid-limit}, \eqref{eq:shifted-comparison-profile-limit}, and the weak lower semicontinuity of the $L^2$-norm. This yields \eqref{eq:inviscid-unweighted-stability} for almost every $t>0$.

Similarly, applying the weak lower semicontinuity of the $L^2$-norm to \eqref{eq:uniform-inviscid-estimate} and using \eqref{eq:well-prepared-initial-data_2}, we have
\begin{equation*}
\int_0^T|\dot X_\infty(t)|^2\,dt\leq\liminf_{\nu\to0}\int_0^T|\dot X^\nu(t)|^2\,dt\leq C\|u_0-\bar u_0\|_{L^2(\mathbb R)}^2
\end{equation*}
for all $T>0$. Letting $T\to\infty$, we obtain
\begin{equation*}
\int_0^\infty|\dot X_\infty(t)|^2\,dt\leq C\|u_0-\bar u_0\|_{L^2(\mathbb R)}^2.
\end{equation*}
Since $X_\infty(0)=0$, the Cauchy--Schwarz inequality gives
\begin{equation*}
|X_\infty(t)|=\left|\int_0^t\dot X_\infty(s)\,ds\right|\leq\sqrt{t}\,\|\dot X_\infty\|_{L^2(0,t)}\leq C\sqrt{t}\,\|u_0-\bar u_0\|_{L^2(\mathbb R)}
\end{equation*}
for all $t\geq0$. This proves \eqref{eq:inviscid-shift-bound} and completes \textup{(iii)}.

Finally, we verify the uniqueness assertion. Suppose that $u_0=\bar u_0$. Then \eqref{eq:inviscid-shift-bound} gives $X_\infty(t)=0$ for all $t \geq 0$. Hence the limiting comparison profile $U^\infty$ coincides with the Riemann solution $\bar u$. Therefore, \eqref{eq:inviscid-unweighted-stability} yields
\begin{equation*}
\|u_\infty(t)-\bar u(t)\|_{L^2(\mathbb R)}=0
\end{equation*}
for almost every $t>0$. This implies the uniqueness of the Riemann solution in the class of weak inviscid limits and completes the proof of Theorem~\ref{cor:inviscid-stability}.

\appendix

\section{Proofs of Lemma~\ref{lem:perturbed-degenerate-profile} and Lemma~\ref{lem:existence_shock}} \label{app:profile}

This section contains the deferred proofs of Lemmas~\ref{lem:perturbed-degenerate-profile} and~\ref{lem:existence_shock}. We set
\begin{equation*}
H(v):=f(v)-f(u_m)-f'(u_m)(v-u_m).
\end{equation*}

\begin{proof}[Proof of Lemma~\ref{lem:perturbed-degenerate-profile}]

Since \(H(u_m)=H'(u_m)=0\), Taylor's theorem gives
\begin{equation}\label{eq:perturbed-H-factor}
H(v)=(v-u_m)^2K(v),\qquad K(v):=\int_0^1(1-s)f''\bigl(u_m+s(v-u_m)\bigr) \, ds.
\end{equation}
For a left state $u_-<u_m$, the condition \eqref{eq:perturbed-degenerate-compatibility} is equivalent to $H(u_-)=0$, and hence to \(K(u_-)=0\). For the cubic flux \(f_0(v)=v^3\), the corresponding function \(K_0\) is given by
\begin{equation*}
K_0(v)=6\int_0^1(1-s)\bigl(u_m+s(v-u_m)\bigr) \, ds = v+2u_m,
\end{equation*}
so that $K_0(-2u_m)=0$. We now show that, if \(\varepsilon_f\) is sufficiently small, the function \(K\) associated with \(f\) has a unique zero \(u_-\) near \(-2u_m\).

Writing \(f=f_0+q\), we have
\begin{equation}\label{fKv}
K(v)= K_0(v) + \int_0^1(1-s)q''\bigl(u_m+s(v-u_m)\bigr)\,ds.
\end{equation}
By the definition \eqref{eq:flux-perturbation-size} of \(\varepsilon_f\), the integral on the right-hand side satisfies
\begin{equation*}
\begin{aligned}
\left|\int_0^1(1-s)q''\bigl(u_m+s(v-u_m)\bigr)\,ds\right| &\leq \varepsilon_f\int_0^1(1-s)\bigl((2-s)u_m+s|v|\bigr)\,ds\\
&=\varepsilon_f\left(\frac56u_m+\frac16|v|\right) \leq C\varepsilon_f(u_m+|v|).
\end{aligned}
\end{equation*}
Since \(K_0(-2u_m)=0\), evaluating \eqref{fKv} at \(v=-2u_m\) yields
\begin{equation}\label{eq:perturbed-K-at-root}
|K(-2u_m)| \leq C\varepsilon_fu_m.
\end{equation}
Moreover, differentiating \(K\) and using \(f'''=6+q'''\) and \(6\int_0^1s(1-s)\,ds=1\), we obtain
\begin{equation}\label{eq:perturbed-K-derivative}
|K'(v)-1|=\left|\int_0^1s(1-s)q'''\bigl(u_m+s(v-u_m)\bigr)\,ds\right| \leq \frac16 \lVert q'''\rVert_{L^\infty} \leq C\varepsilon_f.
\end{equation}
For sufficiently small \(\varepsilon_f>0\), this implies
\begin{equation*}
\frac12\leq K'(v)\leq\frac32,\qquad v\in\mathbb R.
\end{equation*}
Consequently, \(K\) is strictly increasing, with
\begin{equation*}
\lim_{v\to-\infty}K(v)=-\infty,\qquad \lim_{v\to+\infty}K(v)=+\infty.
\end{equation*}
Hence there exists a unique \(u_-\in\mathbb R\) such that $K(u_-)=0$. By the mean value theorem, for some \(\theta\) between \(u_-\) and \(-2u_m\), we have
\begin{equation*}
K(u_-)-K(-2u_m)=K'(\theta)(u_-+2u_m).
\end{equation*}
Using \(K(u_-)=0\), \eqref{eq:perturbed-K-at-root}, and \(K'(\theta)\geq1/2\), we obtain
\begin{equation*}
|u_-+2u_m| = \frac{|K(-2u_m)|}{|K'(\theta)|} \leq C\varepsilon_fu_m.
\end{equation*}
This proves \eqref{eq:perturbed-left-state} and, for sufficiently small \(\varepsilon_f>0\), places \(u_-\) in \((-5u_m/2,-3u_m/2)\). Since \(K(u_-)=0\), the compatibility condition \eqref{eq:perturbed-degenerate-compatibility} also holds.

We next prove \eqref{eq:perturbed-shock-factorization} and \eqref{eq:perturbed-factor-bound}. For \(v\neq u_-\), set
\begin{equation*}
b(v):=\frac{K(v)}{v-u_-},
\end{equation*}
and define \(b(u_-):=K'(u_-)\). Since \(K(u_-)=0\), the fundamental theorem of calculus gives
\begin{equation*}
b(v)=\int_0^1K'\bigl(u_-+s(v-u_-)\bigr)\,ds,\qquad v\in\mathbb R.
\end{equation*}
Using \eqref{eq:perturbed-K-derivative}, we obtain
\begin{equation*}
|b(v)-1|\leq C\varepsilon_f,\qquad \frac12\leq b(v)\leq\frac32
\end{equation*}
for sufficiently small \(\varepsilon_f>0\). Combining \(K(v)=(v-u_-)b(v)\) with \eqref{eq:perturbed-H-factor}, we obtain
\begin{equation*}
H(v)=(v-u_-)(v-u_m)^2b(v).
\end{equation*}

Finally, for \(v\geq u_m\), \eqref{eq:flux-perturbation-size} yields
\begin{equation*}
f''(v)=6v+q''(v)\geq6v-\varepsilon_f(u_m+v)\geq(6-2\varepsilon_f)u_m\geq cu_m
\end{equation*}
for sufficiently small \(\varepsilon_f>0\).
\end{proof}

\begin{proof} [Proof of Lemma~\ref{lem:existence_shock}]
The factorization \eqref{eq:perturbed-shock-factorization} and the bound \eqref{eq:perturbed-factor-bound} yield
\begin{equation*}
H(u_-)=H(u_m)=0,\qquad H(v)>0\quad\text{for }u_-<v<u_m.
\end{equation*}
This, together with the Rankine--Hugoniot relation, implies that the shock speed $\sigma$ satisfies the degeneracy condition \eqref{eq:perturbed-degenerate-compatibility} and the strict Oleinik condition
\begin{equation*}
\frac{f(u_m)-f(v)}{u_m-v} < \sigma < \frac{f(v)-f(u_-)}{v-u_-}, \qquad u_-<v<u_m.
\end{equation*}
The standard existence theory for scalar viscous shock profiles yields a unique (up to translation) profile connecting \(u_-\) to \(u_m\); see \cite{KM}. The condition at \(\xi=0\) in \eqref{eq:shock-ode} fixes the translation and hence yields uniqueness. Integrating the shock ODE in \eqref{eq:shock-ode} once and using the end state at \(+\infty\), we obtain
\begin{equation*}
u^S_\xi=H(u^S)=(u^S-u_-)(u^S-u_m)^2b(u^S) >0,
\end{equation*}
which proves \eqref{eq:perturbed-shock-ode}.

It remains to establish the pointwise bounds \eqref{eq:perturbed-shock-left-tail} and \eqref{eq:perturbed-shock-right-tail}. Let \(\delta_S:=u_m-u_-\). Since \(u^S_\xi>0\) and \(u^S(0)=(u_-+u_m)/2\), we have
\begin{equation*}
0<u^S(\xi)-u_-\leq\frac{\delta_S}{2},\qquad \frac{\delta_S}{2}\leq u_m-u^S(\xi)<\delta_S,\qquad \xi\leq0.
\end{equation*}
Using \eqref{eq:perturbed-shock-ode} and \eqref{eq:perturbed-factor-bound}, we obtain
\begin{equation} \label{diffinequS}
c\delta_S^2\bigl(u^S-u_-\bigr)\leq u^S_\xi\leq C\delta_S^2\bigl(u^S-u_-\bigr),\qquad \xi\leq0.
\end{equation}
Dividing by \(u^S-u_-\), integrating from \(\xi\) to \(0\), and using \(u^S(0)-u_-=\delta_S/2\), we obtain
\begin{equation*}
0<u^S(\xi)-u_-\leq\frac{\delta_S}{2}e^{-c\delta_S^2|\xi|},\qquad \xi\leq0.
\end{equation*}
The upper bound in \eqref{diffinequS} then gives
\begin{equation*}
0<u^S_\xi(\xi)\leq C\delta_S^3e^{-c\delta_S^2|\xi|},\qquad \xi\leq0.
\end{equation*}
This proves \eqref{eq:perturbed-shock-left-tail}.

For \(\xi\geq0\), monotonicity gives
\begin{equation*}
\frac{\delta_S}{2}\leq u^S(\xi)-u_-<\delta_S.
\end{equation*}
Set \(r(\xi):=u_m-u^S(\xi)\). Then \(r(0)=\delta_S/2\), and \eqref{eq:perturbed-shock-ode} gives
\begin{equation*}
-r_\xi=(u^S-u_-)r^2b(u^S).
\end{equation*}
Together with \eqref{eq:perturbed-factor-bound}, this implies
\begin{equation*}
\frac{d}{d\xi}\left(\frac1r\right)\geq c\delta_S,\qquad \xi\geq0.
\end{equation*}
Integrating from \(0\) to \(\xi\), we obtain
\begin{equation*}
\frac1{r(\xi)}\geq\frac2{\delta_S}+c\delta_S\xi,
\end{equation*}
and hence
\begin{equation*}
0<r(\xi)\leq\frac{C\delta_S}{1+c\delta_S^2\xi},\qquad \xi\geq0.
\end{equation*}
Finally, \eqref{eq:perturbed-shock-ode} yields
\begin{equation*}
0<u^S_\xi(\xi)\leq C\delta_Sr(\xi)^2\leq\frac{C\delta_S^3}{(1+c\delta_S^2\xi)^2},\qquad \xi\geq0.
\end{equation*}
This proves \eqref{eq:perturbed-shock-right-tail}.
\end{proof}

\section{Proof of the degenerate Hardy--Poincar\'e inequality} \label{sec:poincare}

For \(y\in[0,1]\), we set
\begin{equation}\label{ABm}
A(y):=(7-3y)y(1-y)^2,\qquad B(y):=(1-y)(12-y),\qquad m(y):=3-2y.
\end{equation}
Throughout this section, we call a function \(v\) admissible if
\begin{equation*}
v \in H^1_{\mathrm{loc}}(0,1) \cap L^1(0,1),\qquad \int_0^1A v_y^2\,dy < \infty,\qquad \int_0^1Bv ^2\,dy<\infty.
\end{equation*}
Note that the function \(\Phi\) defined in \eqref{eq:profile-coordinate} is admissible by \(\phi\in H^1(\mathbb R)\) and the identities in \eqref{eq:profile-jacobian}. With these conventions, we restate Proposition~\ref{Poincare}:

\medskip
\noindent\textbf{Proposition~\ref{Poincare} (restated).}
There exists a constant \(c>0\) such that
\begin{equation}\label{eq:degenerate-hardy-poincare}
\int_0^1A\Phi_y^2\,dy-\int_0^1B\Phi^2\,dy+\frac{10}{3}\Big(\int_0^1m\Phi\,dy\Big)^2
\geq c\int_0^1B\Phi^2\,dy
\end{equation}
for any admissible \(\Phi\).

The proof of Proposition~\ref{Poincare} combines two complementary mechanisms. A weighted Hardy inequality adapted to the endpoint degeneracy yields a strict weighted Poincar\'e estimate on the \(B\)-weighted orthogonal complement of the constants. This leaves the constant mode as the only noncoercive direction, which is then controlled by the rank-one term generated by the shift.

\subsection{A weighted Hardy inequality}

\begin{lemma}[Weighted Hardy inequality]\label{lem:weighted-hardy}
There exists a constant $\gamma \in (0,1)$ such that
\begin{equation}\label{eq:dual-Hardy}
\int_0^1\frac{F^2}{A}\,dy\leq \gamma \int_0^1\frac{F_y^2}{B}\,dy
\end{equation}
for any absolutely continuous function \(F\) satisfying \(F(0)=F(1)=0\), where $A$ and $B$ are defined by \eqref{ABm}. In fact, one may take \(\gamma=7/10\).
\end{lemma}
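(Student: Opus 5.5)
The plan is to prove \eqref{eq:dual-Hardy} by the ground-state (Riccati/Picone) method rather than through general Hardy-type criteria. For a locally absolutely continuous function $w$ on $(0,1)$ and an admissible $F$, expanding a square gives the pointwise bound
\begin{equation*}
0\leq \frac{1}{B}\bigl(F_y-BwF\bigr)^2=\frac{F_y^2}{B}-2wFF_y+Bw^2F^2 .
\end{equation*}
Writing $2wFF_y=(wF^2)_y-w_yF^2$ and integrating over $[\epsilon,1-\epsilon]$ then yields
\begin{equation*}
\int_\epsilon^{1-\epsilon}\frac{F_y^2}{B}\,dy\;\geq\;\int_\epsilon^{1-\epsilon}\bigl(-w_y-Bw^2\bigr)F^2\,dy+\Bigl[wF^2\Bigr]_\epsilon^{1-\epsilon}.
\end{equation*}
It therefore suffices to construct $w$ with two properties. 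First, the pointwise Riccati inequality
\begin{equation*}
-w_y-Bw^2\;\geq\;\frac{10}{7}\,\frac{1}{A}\qquad\text{on }(0,1),
\end{equation*}
and second, boundary terms $wF^2$ that vanish as $\epsilon\downarrow0$.

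First I will analyse the endpoints to fix the form of $w$.

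\emph{Near $y=0$.} Here $A\sim 7y$ and $B\sim12$, so we need $w_y\lesssim -\tfrac{10}{49}\,y^{-1}$. A term $-q\log y$ with $q\geq 10/49$ produces this, and since $Bw^2=O(\log^2y)$ is lower order, the inequality can hold near $0$. For the boundary term we use $F(0)=0$ and Cauchy--Schwarz, $F(y)^2\leq y\int_0^y F_y^2\,dy\lesssim y\int_0^yF_y^2/B\,dy$. Hence $wF^2=O(y\log y)\to0$.

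\emph{Near $y=1$.} Put $s=1-y$. Here $A\sim4s^2$ and $B\sim 11s$, and we need $-w_y-Bw^2\gtrsim \tfrac{5}{14}s^{-2}$. The ansatz $w=-p/s$ gives $-w_y-Bw^2=p s^{-2}-11p^2s^{-1}+\dots$, so any $p>5/14$ works locally. For the boundary term, $F(1)=0$ gives $F(y)^2\leq \bigl(\int_y^1 s\,dy\bigr)\int_y^1F_y^2/s\,dy=o(s^2)$, so $wF^2=o(s)\to0$.

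This local analysis also confirms that the inequality is not sharp at either endpoint: the local Hardy constants there are far from $7/10$. The difficulty is therefore purely global.

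I will then take an explicit ansatz combining these two behaviours, for instance
\begin{equation*}
w(y)=-\frac{p}{1-y}-q\log y+r\qquad\text{or}\qquad w(y)=-\frac{p}{1-y}+q\log\frac{1-y}{y}+r,
\end{equation*}
with parameters $p,q,r$ to be tuned numerically. Multiplying the Riccati inequality by $A\,y(1-y)^2$, or by a suitable power of $(1-y)$, reduces it to positivity on $[0,1]$ of an explicit expression of the form polynomial plus polynomial times $\log y$ and $\log^2y$. I will verify this positivity rigorously as follows. Near each endpoint I use the expansions above with explicit remainders. On the middle region I split $[0,1]$ into finitely many subintervals and, on each, use monotone bounds for $\log y$ and for the polynomial coefficients (interval-arithmetic style). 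If the logarithmic ansatz turns out to be awkward, an alternative is to choose $w=-\psi_y/(B\psi)$ for an explicit positive trial function $\psi$, that is, to verify the dual Euler--Lagrange supersolution inequality $-\bigl(\psi_y/B\bigr)_y\geq \tfrac{10}{7}A^{-1}\psi$ for $\psi$ of the form $y^{a}(1-y)^{b}\times$(polynomial).

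The main obstacle is this global step: finding parameters for which the Riccati inequality holds with the specific constant $10/7=1/\gamma$ on the whole interval. The reason is that the mismatch between the singular weights $1/A$ and $1/B$ is concentrated at interior points, where no asymptotic argument helps. I expect to need a numerical search over $(p,q,r)$, possibly augmented with a low-degree polynomial correction in $w$. After that, the certified verification of a one-variable inequality is tedious but routine. Once $w$ is fixed, letting $\epsilon\downarrow0$ in the integrated identity gives \eqref{eq:dual-Hardy} with $\gamma=7/10$.
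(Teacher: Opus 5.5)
\paragraph{Same framework as the paper.} Your framework is the paper's. Writing $F=\tau h$ and expanding, as the paper does, is your completed square with $w=\tau_y/(B\tau)$. The discarded term is $\tau^2h_y^2/B$, and $-w_y-Bw^2=R/A$ with $R=-\frac{A}{\tau}\bigl(\frac{\tau_y}{B}\bigr)_y$. Note a sign slip: the substitution that turns your Riccati inequality into $-(\psi_y/B)_y\ge\frac{10}{7}A^{-1}\psi$ is $w=\psi_y/(B\psi)$, not $-\psi_y/(B\psi)$. Your endpoint bounds $F^2=o(y)$ and $F^2=o((1-y)^2)$ are the ones the paper uses.

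\paragraph{The gap.} You never produce $w$, and the whole content of the lemma (that the constant $7/10$ works) lies in that choice and its verification. Moreover, neither of your explicit ans\"atze can work. For the first, set $W:=(1-y)w=-p+(1-y)(r-q\log y)$ and multiply the Riccati inequality by $y(1-y)^2$:
\[
py+q(1-y)^2-y(1-y)(12-y)\,W^2\;\ge\;\frac{10}{7(7-3y)} .
\]
The endpoints force $p\ge 5/14$ and $q\ge 10/49$. The combination $W(0.2)-4W(0.8)=3p+1.109\,q$ does not involve $r$. On the other hand, the inequality at $y=0.2$ and $y=0.8$ bounds $|W(0.2)|$ and $|W(0.8)|$ by square roots of affine functions of $(p,q)$. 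An elementary estimate of these square roots (AM--GM for $p\le2$, subadditivity for $p\ge2$) shows that the two bounds are incompatible for every $p,q\ge0$. The ansatz with $q\log\frac{1-y}{y}$ fails at the same two points.

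\paragraph{Why these ans\"atze fail.} The lemma is nearly sharp: the Rayleigh quotient of $F=y(1-y)^2$ is about $1.49$, so the best constant is at least $0.669$. A certifying supersolution must therefore track, in the bulk, the first eigenfunction. That eigenfunction vanishes like $y$ at $0$ and like $(1-y)^2$ at $1$, the nonzero roots of the indicial equations $r(r-1)=0$ and $r(r-2)=0$. Your $w$'s correspond instead to $\psi$ tending to nonzero constants at both ends. A rigid three-parameter family of that type has no room to follow the eigenfunction.

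\paragraph{What works.} Your fallback $\psi=y^a(1-y)^b\times(\text{polynomial})$ is the right idea, with $a=1$, $b=2$. The paper takes $\psi=\tau=y(1-y)^2(10+y+y^2)$, which gives
\[
R(y)=\frac{(7-3y)(326+24y+179y^2-10y^3)}{(12-y)^2(10+y+y^2)},
\]
and
\[
R-\frac{10}{7}=\frac{1574(1-y)^4+1586y(1-y)^3+2281y^2(1-y)^2+2081y^3(1-y)+12y^4}{7(12-y)^2(10+y+y^2)}\ge0,
\]
so no interval arithmetic is needed. For this $w$ one has $w\sim 1/(12y)$ as $y\downarrow0$ and $w\sim -2/(11(1-y)^2)$ as $y\uparrow1$. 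The boundary terms $wF^2$ therefore vanish precisely because your endpoint bounds are little-$o$ rather than big-$O$.
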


\begin{proof}
Since the case \(\int_0^1F_y^2/B\,dy=+\infty\) is trivial, it suffices to consider the case
\begin{equation} \label{Fy2B}
\int_0^1\frac{F_y^2}{B}\,dy<\infty.
\end{equation}
Set
\begin{equation*}
\tau(y):=y(1-y)^2(10+y+y^2),\qquad h(y):=\frac{F(y)}{\tau(y)},\qquad 0<y<1.
\end{equation*}
For \(0<\varepsilon<1/2\), expanding \(F_y=(\tau h)_y\) and integrating by parts over \([\varepsilon,1-\varepsilon]\), we obtain
\begin{equation}\label{eq:weighted-hardy-identity}
\int_\varepsilon^{1-\varepsilon}\frac{F_y^2}{B} \, dy = \int_\varepsilon^{1-\varepsilon}\frac{\tau^2}{B}h_y^2\,dy + \int_\varepsilon^{1-\varepsilon}\frac{R}{A}F^2\,dy + \left[\frac{\tau_y}{\tau B}F^2\right]_\varepsilon^{1-\varepsilon},
\end{equation}
where
\begin{equation*}
R(y):=-\frac{A(y)}{\tau(y)}\left(\frac{\tau_y(y)}{B(y)}\right)_y.
\end{equation*}
Since \(B>0\) on \((0,1)\), the first term on the right-hand side of \eqref{eq:weighted-hardy-identity} is nonnegative. It therefore remains to control the second term and the boundary term.

We first show that the boundary term in \eqref{eq:weighted-hardy-identity} vanishes as \(\varepsilon\downarrow0\). Since \(F(0)=0\), the Cauchy--Schwarz inequality gives
\begin{equation*}
F(y)^2=\left(\int_0^yF_y(s)\,ds\right)^2\leq\left(\int_0^yB(s)\,ds\right)\left(\int_0^y\frac{F_y(s)^2}{B(s)}\,ds\right).
\end{equation*}
By \eqref{Fy2B}, the second factor tends to zero as \(y\downarrow0\), while \(\int_0^yB(s)\,ds=O(y)\). Hence
\begin{equation*}
F(y)^2=o(y)\qquad\text{as }y\downarrow0.
\end{equation*}
Similarly, since \(F(1)=0\),
\begin{equation*}
F(y)^2 = \left( \int_y^1 F_y (s) \, ds \right)^2 \leq\left(\int_y^1B(s)\,ds\right)\left(\int_y^1\frac{F_y(s)^2}{B(s)}\,ds\right)=o\bigl((1-y)^2\bigr)\qquad\text{as }y\uparrow1.
\end{equation*}
On the other hand, the definition of \(\tau\) gives
\begin{equation*}
\frac{\tau_y}{\tau B}=O(y^{-1})\qquad\text{as }y\downarrow0,\qquad \frac{\tau_y}{\tau B}=O\bigl((1-y)^{-2}\bigr)\qquad\text{as }y\uparrow1.
\end{equation*}
Combining these estimates, we have
\begin{equation} \label{endpoint_van}
\lim_{\varepsilon\downarrow0}\left[\frac{\tau_y}{\tau B}F^2\right]_\varepsilon^{1-\varepsilon} = 0
\end{equation}

We now estimate \(R\). A direct computation yields
\begin{equation*}
R(y)=\frac{(7-3y)(326+24y+179y^2-10y^3)}{(12-y)^2(10+y+y^2)}
\end{equation*}
and
\begin{equation*}
R(y)-\frac{10}{7} =\frac{1574(1-y)^4+1586y(1-y)^3+2281y^2(1-y)^2+2081y^3(1-y)+12y^4}{7(12-y)^2(10+y+y^2)} \geq0.
\end{equation*}
Hence \(R(y)\geq10/7\) on \([0,1]\). Returning to \eqref{eq:weighted-hardy-identity} and discarding its first term, we have
\begin{equation*}
\frac{10}{7}\int_\varepsilon^{1-\varepsilon}\frac{F^2}{A}\,dy \leq \int_\varepsilon^{1-\varepsilon}\frac{F_y^2}{B}\,dy -\left[\frac{\tau_y}{\tau B}F^2\right]_\varepsilon^{1-\varepsilon}.
\end{equation*}
Letting \(\varepsilon\downarrow0\) and using \eqref{endpoint_van}, we obtain
\begin{equation*}
\frac{10}{7} \int_0^1\frac{F^2}{A}\,dy\leq\int_0^1\frac{F_y^2}{B}\,dy.
\end{equation*}
Thus, \eqref{eq:dual-Hardy} holds with \(\gamma=7/10\).
\end{proof}

\subsection{The weighted mean-zero estimate}

We use the weighted Hardy inequality to obtain strict coercivity on the \(B\)-weighted mean-zero subspace.

\begin{lemma}[Weighted mean-zero inequality]\label{lem:weighted-mean-zero}
If $\varphi$ is admissible and satisfies
\begin{equation} \label{meanzero_cond}
\int_0^1B\varphi\,dy=0,
\end{equation}
then
\begin{equation}\label{eq:strict-mean-zero-coercivity}
\int_0^1B\varphi^2\,dy\leq\frac{7}{10}\int_0^1A\varphi_y^2\,dy,\qquad \Big(\int_0^1m\varphi\,dy\Big)^2\leq\frac{17}{150}\int_0^1A\varphi_y^2\,dy,
\end{equation}
where $A$, $B$, and $m$ are defined by \eqref{ABm}.
\end{lemma}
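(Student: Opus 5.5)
The plan is a duality argument that feeds Lemma~\ref{lem:weighted-hardy} with suitably chosen primitives. For the first inequality in \eqref{eq:strict-mean-zero-coercivity}, set
\begin{equation*}
F(y):=\int_0^y B\varphi\,ds .
\end{equation*}
Then $F$ is absolutely continuous, $F(0)=0$, and $F(1)=0$ by the mean-zero condition \eqref{meanzero_cond}. Moreover, $F_y^2/B=B\varphi^2$, so $\int_0^1 F_y^2/B\,dy=\int_0^1 B\varphi^2\,dy<\infty$. Integrating by parts on $[\varepsilon,1-\varepsilon]$ gives
\begin{equation*}
\int_\varepsilon^{1-\varepsilon}B\varphi^2\,dy=\bigl[F\varphi\bigr]_\varepsilon^{1-\varepsilon}-\int_\varepsilon^{1-\varepsilon}F\varphi_y\,dy .
\end{equation*}
Assuming the boundary term vanishes, we let $\varepsilon\downarrow0$ and apply Cauchy--Schwarz and then Lemma~\ref{lem:weighted-hardy}:
\begin{equation*}
\int_0^1B\varphi^2\,dy\le\Big(\int_0^1\frac{F^2}{A}\,dy\Big)^{1/2}\Big(\int_0^1A\varphi_y^2\,dy\Big)^{1/2}\le\Big(\frac7{10}\int_0^1B\varphi^2\,dy\Big)^{1/2}\Big(\int_0^1A\varphi_y^2\,dy\Big)^{1/2}.
\end{equation*}
Dividing through yields the first bound with constant $7/10$.

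For the second bound, use the mean-zero condition to subtract a multiple of $B$. Since $\int_0^1 m\,dy=2$ and $\int_0^1B\,dy=35/6$, take $\lambda:=12/35$ and set
\begin{equation*}
G(y):=\int_0^y(m-\lambda B)\,ds,
\end{equation*}
so that $G(0)=G(1)=0$ and $\int_0^1m\varphi\,dy=\int_0^1(m-\lambda B)\varphi\,dy=-\int_0^1G\varphi_y\,dy$, again modulo boundary terms. Cauchy--Schwarz gives
\begin{equation*}
\Big(\int_0^1 m\varphi\,dy\Big)^2\le\Big(\int_0^1\frac{G^2}{A}\,dy\Big)\int_0^1A\varphi_y^2\,dy .
\end{equation*}
It remains to check that $\int_0^1G^2/A\,dy\le 17/150$. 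Since $G$ is a cubic vanishing at $0$ and $1$, we can write $G=y(1-y)p(y)$ with $p$ linear. Then $G^2/A=y\,p(y)^2/(7-3y)$, which is bounded on $[0,1]$, so the integral is explicit: a rational number plus a multiple of $\log(7/4)$. I would carry out this computation and compare the result with $17/150$. This explicit evaluation is routine but must come out below the stated constant. If a direct evaluation is too tight, one could optimize over the choice of $\lambda$; note that any $\lambda$ is allowed by \eqref{meanzero_cond}, but only $\lambda=12/35$ gives $G(1)=0$.

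The main obstacle is justifying that the boundary terms $[F\varphi]$ and $[G\varphi]$ vanish, using only admissibility. Near $y=1$, Cauchy--Schwarz gives
\begin{equation*}
|F(y)|\le\Big(\int_y^1B\,ds\Big)^{1/2}\Big(\int_y^1B\varphi^2\,ds\Big)^{1/2}=o(1-y).
\end{equation*}
Meanwhile $|\varphi(y)-\varphi(1/2)|\le\bigl(\int A\varphi_y^2\bigr)^{1/2}\bigl(\int_{1/2}^y A^{-1}\bigr)^{1/2}=O((1-y)^{-1/2})$, so $F\varphi\to0$. Near $y=0$ we have $F=o(y^{1/2})$, while $\varphi=O(|\log y|^{1/2})$, so again $F\varphi\to0$. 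For $G$ the bounds are even simpler, since $|G(y)|=O(y)$ near $0$ and $|G(y)|=O(1-y)$ near $1$, and the same growth bounds on $\varphi$ apply.
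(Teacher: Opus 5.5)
Your proposal is correct and follows the paper's proof essentially step for step: the same primitive $F=\int_0^y B\varphi\,ds$ fed into Lemma~\ref{lem:weighted-hardy}, the same $G$ with $\lambda=12/35$ (so $G=-\frac1{35}y(1-y)(39-4y)$), and the same endpoint growth bounds $\varphi=O(|\log y|^{1/2})$ and $\varphi=O((1-y)^{-1/2})$ to make the boundary terms vanish. The one step you left open, $\int_0^1 G^2/A\,dy\le 17/150$, does hold: it evaluates to $\frac{1}{99225}\bigl(55447\log\frac74-20199\bigr)\approx0.1091<\frac{17}{150}\approx0.1133$. The paper avoids the logarithm by bounding the convex function $\rho(y)=(39-4y)^2/[1225(7-3y)]$ by its chord, which gives exactly $17/150$. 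Your fallback of varying $\lambda$ is not actually available, since $G(0)=G(1)=0$ is forced for $\int_0^1 G^2/A\,dy<\infty$.
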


The first estimate in \eqref{eq:strict-mean-zero-coercivity} shows that the local quadratic form is coercive on the \(B\)-weighted orthogonal complement of the constants. Indeed, if \(\int_0^1B\varphi\,dy=0\), then
\begin{equation*}
\int_0^1A\varphi_y^2\,dy-\int_0^1B\varphi^2\,dy\geq\frac37\int_0^1B\varphi^2\,dy.
\end{equation*}
Thus, in the \(B\)-orthogonal decomposition into the constant mode and its complement, the only obstruction to coercivity is the constant mode.

\begin{proof}
Set
\begin{equation*}
F(y):=\int_0^yB(s)\varphi(s)\,ds.
\end{equation*}
Then, since \(B\varphi\in L^1(0,1)\), the function \(F\) is absolutely continuous on \([0,1]\) and satisfies \(F_y=B\varphi\) a.e. Also, \(F(0)=0\), while \(F(1)=0\) by the mean-zero condition \eqref{meanzero_cond}. Integrating by parts, with the endpoint terms justified in Remark~\ref{rem:hp-endpoint-ibp}, and applying the Cauchy--Schwarz inequality, we obtain
\begin{equation*}
\int_0^1B\varphi^2\,dy=-\int_0^1\varphi_yF\,dy\leq\Big(\int_0^1\frac{F^2}{A}\,dy\Big)^{1/2}\Big(\int_0^1A\varphi_y^2\,dy\Big)^{1/2}.
\end{equation*}
Applying Lemma~\ref{lem:weighted-hardy} and using \(F_y^2/B=B\varphi^2\), we conclude that
\begin{equation*}
\int_0^1B\varphi^2\,dy\leq\frac{7}{10}\int_0^1A\varphi_y^2\,dy.
\end{equation*}

To prove the second estimate in \eqref{eq:strict-mean-zero-coercivity}, we construct a function \(G\) such that \(G(0)=G(1)=0\) and
\begin{equation*}
\int_0^1m\varphi\,dy=-\int_0^1G\varphi_y\,dy.
\end{equation*}
The Cauchy--Schwarz inequality will then reduce the estimate to a bound on \(\int_0^1G^2/A\,dy\).

We set \(b:=\int_0^1B\,dy=35/6\) and \(\mu:=\int_0^1m\,dy=2\), and define, for \(y\in[0,1]\),
\begin{equation*}
G(y):=\int_0^y\left(m(s)-\frac{\mu}{b}B(s)\right)\,ds=-\frac1{35}y(1-y)(39-4y).
\end{equation*}
The choice of \(\mu/b\) ensures that \(\int_0^1(m-\frac{\mu}{b}B)\,dy=0\), and hence \(G(0)=G(1)=0\). The mean-zero condition \eqref{meanzero_cond} and the identity \(G_y=m-\frac{\mu}{b}B\) imply
\begin{equation}\label{eq:second-primitive}
\Big(\int_0^1m\varphi\,dy\Big)^2=\Big(\int_0^1G\varphi_y\,dy\Big)^2\leq\Big(\int_0^1\frac{G^2}{A}\,dy\Big)\int_0^1A\varphi_y^2\,dy,
\end{equation}
where the integration by parts is justified by Remark~\ref{rem:hp-endpoint-ibp}.

To complete the estimate, it remains to show that \(\int_0^1G^2/A\,dy\leq17/150\). For \(y\in[0,1]\), define
\begin{equation*}
\rho(y):=\frac{(39-4y)^2}{1225(7-3y)}.
\end{equation*}
Then \(G^2/A=y\rho(y)\). A direct computation gives \(\rho''(y)=15842/[1225(7-3y)^3]>0\). Since \(\rho(0)<9/50\) and \(\rho(1)=1/4\), the convexity of \(\rho\) implies
\begin{equation*}
\rho(y)\leq\frac9{50}(1-y)+\frac14y.
\end{equation*}
Consequently,
\begin{equation*}
\int_0^1\frac{G^2}{A}\,dy\leq\frac9{50}\int_0^1y(1-y)\,dy+\frac14\int_0^1y^2\,dy=\frac{17}{150}.
\end{equation*}
Substituting this bound into \eqref{eq:second-primitive} proves the second inequality in \eqref{eq:strict-mean-zero-coercivity}.
\end{proof}

\begin{remark}[Vanishing of the endpoint terms]\label{rem:hp-endpoint-ibp}
The two integrations by parts in the proof of Lemma~\ref{lem:weighted-mean-zero} are performed first on \([\varepsilon,1-\varepsilon]\), with boundary terms involving \(F\varphi\) and \(G\varphi\). We verify that both vanish as \(\varepsilon\downarrow0\).

For \(0<y<1/2\), the Cauchy--Schwarz inequality gives
\begin{equation*}
|\varphi(y)|\leq|\varphi(1/2)|+\Big(\int_y^{1/2}A\varphi_s^2\,ds\Big)^{1/2}\Big(\int_y^{1/2}\frac{1}{A}\,ds\Big)^{1/2}.
\end{equation*}
Since \(A(y)\sim7y\) as \(y\downarrow0\), and \(A(y)\sim4(1-y)^2\) as \(y\uparrow1\), the same argument at the other endpoint yields
\begin{equation*}
|\varphi(y)|\leq C_\varphi(1+|\log y|^{1/2})\quad\text{as }y\downarrow0,\qquad |\varphi(y)|\leq C_\varphi(1+(1-y)^{-1/2})\quad\text{as }y\uparrow1.
\end{equation*}

For the function \(F\) used in the proof, \(F(0)=F(1)=0\) and
\begin{equation*}
\int_0^1\frac{F_y^2}{B}\,dy=\int_0^1B\varphi^2\,dy<\infty.
\end{equation*}
Hence the endpoint estimates established in the proof of Lemma~\ref{lem:weighted-hardy} apply and give
\begin{equation*}
F(y)^2=o(y)\quad\text{as }y\downarrow0,\qquad F(y)^2=o((1-y)^2)\quad\text{as }y\uparrow1.
\end{equation*}
The explicit formula for \(G\) also shows that
\begin{equation*}
G(y)=O(y)\quad\text{as }y\downarrow0,\qquad G(y)=O(1-y)\quad\text{as }y\uparrow1.
\end{equation*}
Combining these estimates, we have
\begin{equation*}
F(y)\varphi(y)\to0,\qquad G(y)\varphi(y)\to0
\end{equation*}
as \(y\downarrow0\) and as \(y\uparrow1\). Thus the boundary terms in both integrations by parts vanish as \(\varepsilon\downarrow0\).
\end{remark}

\subsection{Proof of Proposition~\ref{Poincare}}

We now combine the mean-zero coercivity with the rank-one term by decomposing an admissible function into its \(B\)-weighted mean and mean-zero component.

Let \(\Phi\) be admissible. Set
\begin{equation*}
d:=\Big(\int_0^1B\,dy\Big)^{-1}\int_0^1B\Phi\,dy,\qquad \varphi:=\Phi-d, \qquad s:=\int_0^1m\varphi\,dy.
\end{equation*}
Then $\varphi$ satisfies the mean-zero condition \eqref{meanzero_cond} and $\varphi_y=\Phi_y$. Since \(\int_0^1B\,dy=35/6\) and \(\int_0^1m\,dy=2\), we have
\begin{equation*}
\int_0^1A\Phi_y^2\,dy=\int_0^1A\varphi_y^2\,dy,\qquad \int_0^1B\Phi^2\,dy=\int_0^1B\varphi^2\,dy+\frac{35}{6}d^2,\qquad \int_0^1m\Phi\,dy=s+2d.
\end{equation*}
Using these identities, we can rewrite the left-hand side of \eqref{eq:degenerate-hardy-poincare}, after completing the square in \(d\) and \(s\), as
\begin{equation}\label{eq:exact-rank-one-completion}
\begin{split}
&\int_0^1A\Phi_y^2\,dy-\int_0^1B\Phi^2\,dy+\frac{10}{3}\Big(\int_0^1m\Phi\,dy\Big)^2\\
&\quad=\int_0^1A\varphi_y^2\,dy-\int_0^1B\varphi^2\,dy+\frac{15}{2}\Big(d+\frac89s\Big)^2-\frac{70}{27}s^2.
\end{split}
\end{equation}
Lemma~\ref{lem:weighted-mean-zero} yields
\begin{equation*}
\int_0^1B\varphi^2\,dy+\frac{70}{27}s^2 \leq\left(\frac{7}{10}+\frac{70}{27}\frac{17}{150}\right)\int_0^1A\varphi_y^2\,dy =\frac{161}{162}\int_0^1A\varphi_y^2\,dy.
\end{equation*}
Substituting this estimate into \eqref{eq:exact-rank-one-completion}, we obtain
\begin{equation}\label{eq:positive-reduction}
\int_0^1A\Phi_y^2\,dy-\int_0^1B\Phi^2\,dy+\frac{10}{3}\Big(\int_0^1m\Phi\,dy\Big)^2
\geq\frac{1}{162}\int_0^1A\varphi_y^2\,dy+\frac{15}{2}\Big(d+\frac89s\Big)^2.
\end{equation}

It remains to control \(\int_0^1B\Phi^2\,dy\) by the right-hand side of \eqref{eq:positive-reduction}. Since
\begin{equation*}
d^2=\Big(d+\frac89s-\frac89s\Big)^2\leq2\Big(d+\frac89s\Big)^2+2s^2,
\end{equation*}
Lemma~\ref{lem:weighted-mean-zero} gives
\begin{equation*}
\begin{split}
\int_0^1B\Phi^2\,dy =\int_0^1B\varphi^2\,dy+\frac{35}{6}d^2 &\leq\frac{7}{10}\int_0^1A\varphi_y^2\,dy + \frac{35}{3}\Big(d+\frac89s\Big)^2+\frac{35}{3}s^2\\
&\leq\frac{91}{45}\int_0^1A\varphi_y^2\,dy+\frac{35}{3}\Big(d+\frac89s\Big)^2.
\end{split}
\end{equation*}
Comparing this estimate with \eqref{eq:positive-reduction} proves \eqref{eq:degenerate-hardy-poincare} for some \(c>0\).

\section{Global well-posedness of the scalar viscous conservation law}\label{app:global-wellposedness}

The following proposition provides the global well-posedness result needed for the proof of Theorem~\ref{mainthm}.

\begin{proposition}[Global well-posedness]\label{prop:global-wellposedness}
Let \(f\in C^2(\mathbb R)\), and let \(r\in C^2(\mathbb R)\) satisfy
\begin{equation*}
\lim_{x\to-\infty}r(x)=r_-,\qquad \lim_{x\to+\infty}r(x)=r_+,\qquad \min\{r_-,r_+\}\leq r(x)\leq\max\{r_-,r_+\},
\end{equation*}
together with \(r_x,r_{xx}\in L^2(\mathbb R)\). If \(u_0-r\in H^1(\mathbb R)\), then the Cauchy problem
\begin{equation}\label{eq:appendix-viscous-cauchy}
u_t+f(u)_x=u_{xx},\qquad u(0,\cdot)=u_0,
\end{equation}
admits a unique global strong solution \(u\) satisfying, for every \(T>0\),
\begin{equation}\label{eq:appendix-strong-regularity}
u-r\in C([0,T];H^1(\mathbb R))\cap L^2(0,T;H^2(\mathbb R)),\qquad u_t\in L^2(0,T;L^2(\mathbb R)).
\end{equation}
Moreover,
\begin{equation}\label{eq:appendix-maximum-principle}
\operatorname*{ess\,inf}_{\mathbb R}u_0\leq u(t,x)\leq\operatorname*{ess\,sup}_{\mathbb R}u_0,\qquad t\geq0,\quad x\in\mathbb R.
\end{equation}
\end{proposition}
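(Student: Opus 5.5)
We will prove the result by the standard perturbative route for parabolic equations: a local existence theorem for the perturbation $v:=u-r$ in $H^1$, combined with a maximum principle and an a priori $H^1$ bound that depends only on $T$. Writing $u=r+v$, the equation becomes
\begin{equation*}
v_t-v_{xx}+f'(r+v)(r_x+v_x)=r_{xx},\qquad v(0)=v_0:=u_0-r\in H^1(\mathbb R).
\end{equation*}
Here $r_x,r_{xx}\in L^2$ and $r$ is bounded. We will carry out the following steps in order.

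\textbf{Local existence.} We obtain a local solution by a Banach fixed point for the Duhamel formulation with the heat semigroup. The space is $C([0,T_0];H^1)\cap L^2(0,T_0;H^2)$, restricted to a ball whose radius is determined by $\|v_0\|_{H^1}$. Since $H^1\subset L^\infty$ and $f\in C^2$, the nonlinearity $f'(r+v)(r_x+v_x)$ is locally Lipschitz from $H^1$ into $L^2$. Maximal parabolic regularity then gives $v\in L^2H^2$ and $v_t\in L^2L^2$, and $T_0$ depends only on an upper bound for $\|v_0\|_{H^1}$.

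\textbf{Maximum principle.} Set $M:=\operatorname{ess\,sup}u_0$ and test the equation for $u$ with $(u-M)_+$. This function lies in $H^1$, because $u_0\le M$ and $r$ takes values between $r_\pm$. Also $r_\pm$ are the limits of $u_0$, since $u_0-r\in H^1$ vanishes at infinity, so $r_\pm\le M$ and therefore $r\le M$. The convective term is $\int f'(u)u_x(u-M)_+\,dx=\int \partial_x G(u)\,dx=0$, where $G(s)=\int_M^s f'(\tau)(\tau-M)_+\,d\tau$; this uses that $G(u)$ is integrable and vanishes at infinity. We obtain $\frac{d}{dt}\|(u-M)_+\|_{L^2}^2\le0$, hence $u\le M$. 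The lower bound is symmetric. This gives \eqref{eq:appendix-maximum-principle}, and it also yields a uniform $L^\infty$ bound on $u$, hence on $v$ and on $f'(u)$, $f''(u)$.

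\textbf{Global a priori bounds.} We take the $L^2$ inner product of the $v$-equation with $v$. Using that $f'(u)$ is bounded, we get
\begin{equation*}
\tfrac12\tfrac{d}{dt}\|v\|^2+\|v_x\|^2\le C(\|r_x\|+\|v_x\|)\|v\|+\|r_{xx}\|\|v\|,
\end{equation*}
and Gronwall gives an $L^2$ bound on any finite interval. Next we test with $-v_{xx}$:
\begin{equation*}
\tfrac12\tfrac{d}{dt}\|v_x\|^2+\|v_{xx}\|^2\le C(\|r_x\|+\|v_x\|+\|r_{xx}\|)\|v_{xx}\|,
\end{equation*}
and absorbing and applying Gronwall bounds $\|v\|_{H^1}$ on $[0,T]$ by a constant $C(T)$. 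Since the local existence time depends only on $\|v\|_{H^1}$, the solution extends to every $T$. Uniqueness follows from an $L^2$ energy estimate for the difference of two solutions, using the $L^\infty$ bound and the Lipschitz continuity of $f'$ on bounded sets.

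The main obstacle is justifying the maximum principle at the strong-solution level on the whole line. One must check that $(u-M)_+$ is an admissible test function and that the boundary terms vanish, which needs some care when $r_\pm=M$. If this causes difficulty, we will first prove the estimate for mollified data and smooth solutions and then pass to the limit using continuous dependence in $H^1$.
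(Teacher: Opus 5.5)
Your proposal is correct and follows essentially the same route as the paper: set $v=u-r$, get a local $H^1$ solution from the local Lipschitz continuity of the nonlinearity $H^1\to L^2$, prove the maximum principle by truncation with $(u-M)_+$, and rule out blow-up with $L^2$ and $H^1$ energy estimates plus Gronwall; the only differences are technical (the paper invokes Henry's sectorial-operator theory instead of a Duhamel fixed point with maximal regularity, tests the $u$-equation with $-u_{xx}$ rather than the $v$-equation with $-v_{xx}$, and takes uniqueness from Henry's theorem rather than a separate $L^2$ difference estimate). The obstacle you flag is harmless: since $r\le M$, one has $0\le (u-M)_+\le |v|$ and $|\partial_x (u-M)_+|\le |v_x|+|r_x|$, so $(u-M)_+\in H^1$ even when $r_\pm=M$, and your direct truncation argument works without any mollification.
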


\begin{proof}
Set \(v:=u-r\) and \(v_0:=u_0-r\). Then
\begin{equation}\label{eq:appendix-perturbation-equation}
v_t-v_{xx}=N(v),\qquad v(0,\cdot)=v_0,
\end{equation}
where
\begin{equation*}
N(v):=-\bigl(f(v+r)-f(r)\bigr)_x+r_{xx}-f(r)_x=r_{xx}-f'(v+r)(v_x+r_x).
\end{equation*}
Since \(r\) is bounded and \(r_x,r_{xx}\in L^2(\mathbb R)\), we have \(N(0)\in L^2(\mathbb R)\). If \(\|v\|_{H^1},\|w\|_{H^1}\leq R\), then the one-dimensional Sobolev embedding and the mean value theorem yield
\begin{equation*}
\begin{split}
\|N(v)-N(w)\|_{L^2} &\leq C_R\|v-w\|_{L^\infty}\bigl(\|v_x\|_{L^2}+\|r_x\|_{L^2}\bigr)+C_R\|v_x-w_x\|_{L^2} \leq C_R\|v-w\|_{H^1}.
\end{split}
\end{equation*}
Thus \(N:H^1(\mathbb R)\to L^2(\mathbb R)\) is locally Lipschitz.

Let \(A:=I-\partial_{xx}\) on \(L^2(\mathbb R)\), with domain \(D(A)=H^2(\mathbb R)\). Since \(A\) is sectorial and \(D(A^{1/2})=H^1(\mathbb R)\), the equation
\begin{equation*}
v_t+Av=v+N(v)
\end{equation*}
has, by \cite[Theorems~3.3.3 and~3.3.4]{Henry}, a unique maximal solution on \([0,T_*)\) such that
\begin{equation}\label{eq:appendix-local-regularity}
v\in C([0,T_*);H^1(\mathbb R))\cap C((0,T_*);H^2(\mathbb R))\cap C^1((0,T_*);L^2(\mathbb R)).
\end{equation}
Moreover,
\begin{equation}\label{eq:appendix-continuation-alternative}
T_*<\infty\quad\Longrightarrow\quad\limsup_{t\uparrow T_*}\|v(t)\|_{H^1(\mathbb R)}=\infty.
\end{equation}

To exclude finite-time blow-up in \eqref{eq:appendix-continuation-alternative}, we first obtain a uniform bound on the range of \(u\). Set
\begin{equation*}
m:=\operatorname*{ess\,inf}_{\mathbb R}u_0,\qquad M:=\operatorname*{ess\,sup}_{\mathbb R}u_0.
\end{equation*}
Since \(u_0-r\in H^1(\mathbb R)\), its continuous representative vanishes at infinity, and hence \(u_0(x)\to r_\pm\) as \(x\to\pm\infty\). Together with the assumptions on \(r\), this gives \(m\leq r\leq M\). The standard parabolic truncation argument, applied to \((u-M)_+\) and \((m-u)_+\), yields
\begin{equation*}
m\leq u(t,x)\leq M,\qquad 0\leq t<T_*,\quad x\in\mathbb R;
\end{equation*}
see, for example, \cite[Chapter~III, Section~7]{LSU}. This proves \eqref{eq:appendix-maximum-principle} on the maximal existence interval.

We now derive the estimates needed to show that \(T_*=\infty\). We rewrite \eqref{eq:appendix-perturbation-equation} as
\begin{equation}\label{eq:appendix-v-equation-divergence}
v_t+\bigl(f(u)-f(r)\bigr)_x=v_{xx}+h_r,\qquad h_r:=r_{xx}-f(r)_x\in L^2(\mathbb R).
\end{equation}
Note that, since both \(u\) and \(r\) take values in \([m,M]\),
\begin{equation} \label{fur_bd}
|f(u)-f(r)|\leq C|v|,\qquad |f'(u)|\leq C.
\end{equation}
Taking the \(L^2\)-inner product of \eqref{eq:appendix-v-equation-divergence} with \(v\), we obtain
\begin{equation*}
\frac12\frac{d}{dt}\|v\|_{L^2}^2+\|v_x\|_{L^2}^2=\int_{\mathbb R}\bigl(f(u)-f(r)\bigr)v_x\,dx+\int_{\mathbb R}h_rv\,dx
\end{equation*}
and, applying Young's inequality together with \eqref{fur_bd} and $h_r \in L^2$,
\begin{equation}\label{eq:appendix-L2-estimate}
\frac{d}{dt}\|v(t)\|_{L^2}^2+\|v_x(t)\|_{L^2}^2\leq C\bigl(1+\|v(t)\|_{L^2}^2\bigr).
\end{equation}
Similarly, taking the \(L^2\)-inner product of \eqref{eq:appendix-viscous-cauchy} with \(-u_{xx}\) yields
\begin{equation*}
\frac12\frac{d}{dt}\|u_x\|_{L^2}^2+\|u_{xx}\|_{L^2}^2=\int_{\mathbb R}f'(u)u_xu_{xx}\,dx\leq\frac12\|u_{xx}\|_{L^2}^2+C\|u_x\|_{L^2}^2,
\end{equation*}
and hence
\begin{equation}\label{eq:appendix-H1-estimate}
\frac{d}{dt}\|u_x(t)\|_{L^2}^2+\|u_{xx}(t)\|_{L^2}^2\leq C\|u_x(t)\|_{L^2}^2.
\end{equation}
These estimates are first integrated over \((\tau,t)\), with \(0<\tau<t<T_*\), and then extended to \([0,t]\) by \eqref{eq:appendix-local-regularity} and \(\tau\downarrow0\).

Assume, for contradiction, that \(T_*<\infty\). Then Gronwall's inequality applied to \eqref{eq:appendix-L2-estimate} and \eqref{eq:appendix-H1-estimate} yields
\begin{equation*}
\sup_{0\leq t<T_*}\bigl(\|v(t)\|_{L^2}+\|u_x(t)\|_{L^2}\bigr)<\infty.
\end{equation*}
Since \(v_x=u_x-r_x\), this implies that $\|v(t)\|_{H^1}$ remains bounded on $[0,T_*)$, which contradicts \eqref{eq:appendix-continuation-alternative}. Therefore, \(T_*=\infty\).

Finally, fix \(T>0\). Integrating \eqref{eq:appendix-H1-estimate} gives $u_{xx}\in L^2(0,T;L^2(\mathbb R))$. Since \(v_{xx}=u_{xx}-r_{xx}\) and \(v\in C([0,T];H^1(\mathbb R))\) by \eqref{eq:appendix-local-regularity}, we have $v\in L^2(0,T;H^2(\mathbb R))$. Moreover, \eqref{fur_bd} and \(u_x\in C([0,T];L^2(\mathbb R))\) imply
\begin{equation*}
u_t=u_{xx}-f'(u)u_x\in L^2(0,T;L^2(\mathbb R)).
\end{equation*}
This proves \eqref{eq:appendix-strong-regularity} and completes the proof.
\end{proof}

\end{document}